\renewcommand{\tocsection}[3]{%
  \indentlabel{\@ifnotempty{#2}{\bfseries\ignorespaces#1 #2\quad}}\bfseries#3}
\renewcommand{\tocsubsection}[3]{%
  \indentlabel{\@ifnotempty{#2}{\ignorespaces#1 #2\quad}}#3}
\patchcmd{\@setaddresses}{\indent}{\noindent}{}{}
\patchcmd{\@setaddresses}{\indent}{\noindent}{}{}
\patchcmd{\@setaddresses}{\indent}{\noindent}{}{}
\patchcmd{\@setaddresses}{\indent}{\noindent}{}{}
\DeclareMathOperator{\C}{\mathcal{C}}
\theoremstyle{definition}
\newtheorem{theorem}{Theorem}[section]
\newtheorem{lemma}[theorem]{Lemma}
\newtheorem{corollary}[theorem]{Corollary}
\newtheorem{proposition}[theorem]{Proposition}
\newtheorem{construction}[theorem]{Construction}
\newtheorem{remark}[theorem]{Remark}
\newcommand{\fqn}{\mathbb{F}_{q^n}}
\newcommand{\F}{{\mathbb F}}
\newcommand{\fq}{{\mathbb F}_{q}}
\newcommand{\la}{\langle}
\newcommand{\ra}{\rangle}
\newcommand{\PG}{\mathrm{PG}}
\newcommand{\N}{\mathrm{N}}
\newcommand\qbin[3]{\left[\begin{matrix} #1 \\ #2 \end{matrix} \right]_{#3}}
\newcommand{\fqm}{\mathbb{F}_{q^m}}
\title{Clubs in projective spaces and three-weight rank-metric codes}
\date{}
\author[J. Mannaert]{Jonathan Mannaert}
\address{Jonathan Mannaert, \textnormal{Department of Mathematics and Data Science, Vrije Universiteit Brussel, Pleinlaan 2, 1050 Brussel, Belgium}}
\email{Jonathan.Mannaert@vub.be}
\author[P. Santonastaso]{Paolo Santonastaso}
\address{Paolo Santonastaso, \textnormal{Dipartimento di Matematica e Fisica, Universit\`a degli Studi della Campania ``Luigi Vanvitelli'', Viale Lincoln, 5, I--\,81100 Caserta, Italy}\newline
\textnormal{Dipartimento di Meccanica, Matematica e Management, Politecnico di Bari, Via Orabona 4, 70125 Bari, Italy}}
\email{paolo.santonastaso@unicampania.it,paolo.santonastaso@poliba.it}
\author[F. Zullo]{Ferdinando Zullo}
\address{Ferdinando Zullo, \textnormal{Dipartimento di Matematica e Fisica, Universit\`a degli Studi della Campania ``Luigi Vanvitelli'', Viale Lincoln, 5, I--\,81100 Caserta, Italy}}
\email{ferdinando.zullo@unicampania.it}
\subjclass[2020]{51E20; 94B05; 94B27} 
\keywords{Linear set; club; rank-metric code; MacWilliams identities}
\begin{document}

\begin{abstract}
Linear sets over finite fields are central objects in finite geometry and coding theory, with deep connections to structures such as semifields, blocking sets, KM-arcs, and rank-metric codes. Among them, $i$-clubs, a class of linear sets where all but one point (which has weight $i$) have weight one, have been extensively studied in the projective line but remain poorly understood in higher-dimensional projective spaces.
In this paper, we investigate the geometry and algebraic structure of $i$-clubs in projective spaces. We establish upper bounds on their rank by associating them with rank-metric codes and analyzing their parameters via MacWilliams identities. We also provide explicit constructions of $i$-clubs that attain the maximum rank for $i \geq m/2$, and we demonstrate the existence of non-equivalent constructions when $i \leq m-2$. The special case $i = m-1$ is fully classified. Furthermore, we explore the rich geometry of three-weight rank-metric codes, offering new constructions from clubs and partial classification results.
\end{abstract}

\maketitle


\section{Introduction}

Linear sets have proven to be a powerful tool in various classification results and constructions within finite geometry and coding theory. They have been playing a key role in the study of objects such as semifields, blocking sets, translation ovoids, KM-arcs, and rank-metric codes. For a comprehensive overview of their applications, we refer the reader to~\cite{lavrauw2015field,polverino2010linear}.

Let $\mathbb{V}$ be a $k$-dimensional $\fqm$-vector space, $\Lambda=\PG(\mathbb{V},\fqm)=\PG(k-1,q^m)$ the corresponding projective space and let $U$ be an $\fq$-subspace of $\mathbb{V}$. 
The set of points in the projective space defined by $U$ is denoted by
\[ L_U=\{\langle {u} \rangle_{\fqm} \colon {u}\in U\setminus\{{0}\}\} \]
which is said to be an $\fq$-\emph{linear set} of rank $\dim_{\fq}(U)$.
A central notion associated with linear sets is the \emph{weight} of a point, which intuitively measures how much of the subspace $U$ is ``concentrated'' at that particular point. A linear set is called \emph{scattered} if all of its points have weight one---that is, they intersect the subspace $U$ in the minimal possible way. The concept of scattered linear sets was introduced by Blokhuis and Lavrauw in~\cite{blokhuis2000scattered}, where it was studied in a broader context. These sets have recently attracted considerable interest due to their connection with Maximum Rank Distance (MRD) codes and, more generally, rank-metric codes; see~\cite{polverino2020connections} for a survey.

Closely related to scattered linear sets is the family of linear sets known as \emph{clubs}. An $i$-\emph{club} is an $\fq$-linear set $L_U$ in which all but one of the points have weight one, and the remaining point has weight $i$. 
Clubs on the projective line were originally introduced by Fancsali and Sziklai in~\cite{fancsali2006maximal} (see also~\cite{fancsali2009description}) in the context of maximal partial $2$-spreads in $\PG(8, q)$. Interest in these structures was revived when De Boeck and Van de Voorde in~\cite{de2016linear} characterized translation KM-arcs as those that can be described by $i$-clubs on the projective line in even characteristic. This connection enabled new constructions and classifications of KM-arcs.
Beyond their geometric significance, $i$-clubs also yield linear blocking sets of R\'edei type when interpreted as sets of determined directions of affine point sets. These point sets are also notable for their combinatorial properties and for defining Hamming metric codes with \emph{few weights}; see~\cite{napolitano2023two}.
Moreover, $i$-clubs on the projective line admit a natural algebraic characterization via linearized polynomials. In particular, in~\cite{bartoli2022r} the polynomials associated with clubs in the projective line were studied. Clubs whose associated polynomial is \emph{exceptional} have been shown in~\cite{bartoli2022r} to not exist, indicating that the behavior of such polynomials is deeply dependent on the extension field over which they are considered.
Therefore, their construction heavily depends on the field extension considered, see also \cite{polverino2024fat}. 
Finally, it was recently shown that the free product of rank one uniform $q$-matroids is represented by clubs on the projective line, see \cite{alfarano2024free}.

While clubs on the projective line have been extensively studied, very little is known about clubs in projective space. The aim of this paper is to provide bounds on the rank of an $i$-club, along with their constructions and connections to rank-metric codes.

In Section \ref{sec:prel}, we recall the basics of the theory of linear sets and rank-metric codes. As we will see, rank-metric codes play a fundamental role in the study of such linear sets.
In Section \ref{sec:genclubs}, we describe some general properties of clubs. We begin by presenting known results and constructions, and we prove some basic properties of clubs that will be used later.
Section \ref{sec:bound} is devoted to bounding the rank of an $i$-club. The main idea is to associate a rank-metric code with the dual of an $i$-club and study its parameters. To obtain the final bound, we use the machinery of the MacWilliams identities—this is the first time they have been applied in this context to obtain bounds.
They have also been used to explore the rank-metric code associated with a $2$-club on the line by Sheekey and Van de Voorde in \cite{sheekeyVdV} with a slightly different objective.
In Section \ref{sec:constr}, we provide constructions of $i$-clubs that attain the maximum rank when $i \geq m/2$. We also show that when $i \leq m-2$, there exist non-equivalent constructions.
The case $i = m - 1$ is studied in Section \ref{sec:classm-1club}, where we are able to completely classify the $(m-1)$-clubs with maximum rank.
In Section \ref{sec:threeweights}, we show that the geometry of three-weight rank-metric codes is significantly richer than that of two-weight rank-metric codes. A particularly interesting class of examples arises from the codes associated with the duals of $i$-clubs; we construct new examples and provide some classification results.
We conclude the paper by discussing possible directions for future research.

\section{Preliminaries}\label{sec:prel}

\subsection{Linear sets}

In this paper, we deal with linear sets in projective spaces.
More precisely, let $\mathbb{V}$ be a $k$-dimensional $\fqm$-vector space and let $\Lambda=\PG(\mathbb{V},\fqn)=\PG(k-1,q^m)$. 
Clearly, $\mathbb{V}$ can also be seen as an $\fq$-vector space of dimension $mk$. Therefore, we can consider an $\fq$-subspace $U$ of $\mathbb{V}$. 
The set
\[ L_U=\{\langle {u} \rangle_{\fqm} \colon {u}\in U\setminus\{{0}\}\} \]
is said to be an $\fq$-\emph{linear set} of rank $\dim_{\fq}(U)$.
The rank of $L_U$ will also be denoted by $\mathrm{Rank}(L_U)$.
The \emph{weight} of a projective subspace $S=\PG(W,\fqm) \subseteq \Lambda$ in $L_U$ is defined naturally as $w_{L_U}(S)=\dim_{\fq}(U \cap W)$.

Let us now recall some basic relations between the size of a linear set, the number of points of a certain weight, and its rank.
If $L_U$ has rank $n$ then the weight of any point is bounded by $n$. Denote by $N_i(U)$ the number of points of $\Lambda$ having weight $i\in \{0,\ldots,n\}$ in $L_U$, the following relations hold:
\begin{equation}\label{eq:card}
    |L_U| \leq \frac{q^n-1}{q-1},
\end{equation}
\begin{equation}\label{eq:pesicard}
    |L_U| =N_1(U)+\ldots+N_n(U),
\end{equation}
\begin{equation}\label{eq:pesivett}
    N_1(U)+N_2(U)(q+1)+\ldots+N_n(U)(q^{n-1}+\ldots+q+1)=q^{n-1}+\ldots+q+1.
\end{equation}

An $\fq$-linear set in $\PG(k-1,q^m)$ for which all of its points have weight one is called a \emph{scattered linear set}. If all the points have a weight one except for one that has a weight $i$, it is called an $i$-\emph{club linear set}.

By the above relations, it is easy to see the following.

\begin{proposition}\label{prop:size}
A scattered linear set of rank $n$ has $\frac{q^n-1}{q-1}$ points and an $i$-club of rank $n$ has size $q^{n-1}+\ldots+q^i+1$.
\end{proposition}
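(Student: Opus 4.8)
The plan is to derive the point count for a scattered linear set directly from equations~\eqref{eq:pesicard} and~\eqref{eq:pesivett}, and then to obtain the formula for an $i$-club by a perturbation of the same argument. For the scattered case, since every point has weight one, we have $N_j(U)=0$ for all $j\geq 2$, so~\eqref{eq:pesivett} collapses to $N_1(U)=q^{n-1}+\ldots+q+1$, and then~\eqref{eq:pesicard} gives $|L_U|=N_1(U)=(q^n-1)/(q-1)$, which matches~\eqref{eq:card} with equality (as expected: scattered linear sets are exactly those meeting the bound~\eqref{eq:card}).

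For the $i$-club case, I would proceed as follows. By definition, there is exactly one point of weight $i$, and all remaining points have weight one; hence $N_i(U)=1$, $N_1(U)=|L_U|-1$ (this uses $i\geq 2$, so the special point is not counted among the weight-one points; the degenerate case $i=1$ just gives a scattered set), and $N_j(U)=0$ for every $j\notin\{1,i\}$. Substituting into~\eqref{eq:pesivett} yields
\[
(|L_U|-1) + 1\cdot\bigl(q^{i-1}+\ldots+q+1\bigr) = q^{n-1}+\ldots+q+1,
\]
so that $|L_U| = 1 + \bigl(q^{n-1}+\ldots+q+1\bigr) - \bigl(q^{i-1}+\ldots+q+1\bigr) = q^{n-1}+\ldots+q^{i}+1$, as claimed. (One should also note that this requires $i\leq n$, which is automatic since the weight of any point is bounded by the rank $n$; and that $N_i(U)=1$ is guaranteed by the definition of an $i$-club.)

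The argument is essentially a bookkeeping computation, so there is no serious obstacle. The one point requiring mild care is the treatment of the distinguished point: in~\eqref{eq:pesicard} every point of $L_U$ contributes once regardless of its weight, whereas in~\eqref{eq:pesivett} the point of weight $i$ contributes the factor $q^{i-1}+\ldots+q+1$, and one must make sure not to also count it with weight one. Keeping the indices $\{1,i\}$ disjoint (i.e.\ assuming $i\geq 2$, which is the only interesting case) handles this cleanly. I would present the two cases together in a few lines, citing~\eqref{eq:pesicard} and~\eqref{eq:pesivett} and the fact that a point's weight cannot exceed the rank.
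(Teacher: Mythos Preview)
Your proof is correct and follows exactly the approach the paper intends: the authors simply write ``By the above relations, it is easy to see the following,'' referring to~\eqref{eq:pesicard} and~\eqref{eq:pesivett}, and your argument spells out precisely that computation.
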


Blokhuis and Lavrauw provided the following bound on the rank of a scattered liner set.

\begin{theorem}(see \cite{blokhuis2000scattered})\label{th:rankMaxScattered}
Let $L_U$ be a scattered $\fq$-linear set of rank $n$ in $\PG(k-1,q^m)$, then
\[ n\leq \frac{mk}2. \]
\end{theorem}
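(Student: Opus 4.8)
The plan is to use the standard dimension-count argument of Blokhuis and Lavrauw, whose whole content is to re-encode the scattered hypothesis as a transversality statement and then invoke the Grassmann formula. Throughout, let $U$ be the $\fq$-subspace with $L_U$ scattered of rank $n=\dim_{\fq}U$, sitting inside the ambient $k$-dimensional $\fqm$-space $\mathbb{V}$, so that $\dim_{\fq}\mathbb{V}=mk$. We may clearly assume $m\ge 2$, since for $m=1$ every $\fq$-linear set is a projective subspace and the assertion is not of interest.

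The key step is the following claim: for every $\alpha\in\fqm\setminus\fq$ one has $U\cap\alpha U=\{0\}$. To prove it I would argue by contradiction: if $0\ne w\in U\cap\alpha U$, write $w=\alpha u$ with $u\in U\setminus\{0\}$; then both $u$ and $w$ lie in $U\cap\langle u\rangle_{\fqm}$. Since $L_U$ is scattered, $w_{L_U}(\langle u\rangle_{\fqm})=\dim_{\fq}\bigl(U\cap\langle u\rangle_{\fqm}\bigr)=1$, so $U\cap\langle u\rangle_{\fqm}=\langle u\rangle_{\fq}$, forcing $w\in\fq u$ and hence $\alpha\in\fq$, a contradiction. (In fact the equivalence between "$L_U$ scattered" and "$U\cap\alpha U=0$ for all $\alpha\in\fqm\setminus\fq$" is reversible, though only this direction is needed.)

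Granting the claim, fix any $\alpha\in\fqm\setminus\fq$. Since multiplication by $\alpha$ is an $\fq$-linear bijection of $\mathbb{V}$, we have $\dim_{\fq}(\alpha U)=n$, and by the Grassmann identity
\[
2n=\dim_{\fq}U+\dim_{\fq}(\alpha U)=\dim_{\fq}(U+\alpha U)+\dim_{\fq}(U\cap\alpha U)=\dim_{\fq}(U+\alpha U)\le \dim_{\fq}\mathbb{V}=mk,
\]
which gives $n\le mk/2$.

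I do not expect any genuine obstacle here: the only idea required is to replace the pointwise statement "every point of $L_U$ has weight one" by the single linear-algebra condition $U\cap\alpha U=0$, after which the bound is a one-line dimension count. It is worth noting that a more naive approach through the cardinality relations \eqref{eq:card}--\eqref{eq:pesivett} yields only the much weaker inequality $n\le mk$, so identifying the right algebraic reformulation of the scattered property is really the crux.
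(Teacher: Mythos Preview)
Your argument is correct and is precisely the standard proof due to Blokhuis and Lavrauw: reinterpret ``scattered'' as $U\cap\alpha U=\{0\}$ for every $\alpha\in\fqm\setminus\fq$, then apply the Grassmann formula inside the $mk$-dimensional $\fq$-space $\mathbb{V}$. The paper does not supply its own proof of this theorem; it merely quotes the result from \cite{blokhuis2000scattered}, so there is nothing further to compare. One small wording issue: for $m=1$ the bound is not just ``not of interest'' but actually false (every $\fq$-subspace is scattered in that degenerate case), so the hypothesis $m\ge2$ is genuinely needed rather than a convenience.
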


A scattered $\fq$-linear set of rank $km/2$ in $\PG(k-1,q^m)$ is said to be \emph{maximum scattered} and $U$ is said to be a maximum scattered $\fq$-subspace.

We recall the following existence result on maximum scattered linear sets in $\PG(k-1,q^m)$, obtained by combining results from a series of papers \cite{blokhuis2000scattered,ball2000linear,bartoli2018maximum,csajbok2017maximum}, that proves the bound in Theorem \ref{th:rankMaxScattered} is tight when $km$ is even.

\begin{theorem}\label{th:existencescattered}
For every positive integers $k$ and $m$ for which $km$ is even, and each value of $q$, there exists a scattered $\fq$-linear set in $\mathrm{PG}(k-1,q^m)$ of rank $\frac{km}2$. 
\end{theorem}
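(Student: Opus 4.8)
The plan is to prove the statement by a case analysis on the parity of $k$: since $km$ is even, only the two situations ``$k$ even'' and ``$k$ odd with $m$ even'' occur (we take $k\ge 2$, the case $k=1$ being degenerate as $\PG(0,q^m)$ carries a single point). In each case I would exhibit an explicit $\fq$-subspace $U$ of $\fqm^{k}$ with $\dim_{\fq}U=km/2$ for which $L_U$ is scattered; by Theorem~\ref{th:rankMaxScattered} this rank is automatically the largest possible. The two cases rest on different inputs, and I expect the odd-$k$ case to be the genuine difficulty.

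For $k=2j$ even I would reduce to the projective line. A maximum scattered $\fq$-linear set of $\PG(1,q^m)$ exists for every $m$: for instance $U_1=\{(x,x^q)\colon x\in\fqm\}$ is an $\fq$-subspace of $\fqm^{2}$ of dimension $m$, and it is scattered because if $(y,y^q)=\lambda(x,x^q)$ with $x\ne 0$ then $y^q=\lambda x^q$ while also $y^q=(\lambda x)^q=\lambda^q x^q$, forcing $\lambda^q=\lambda$, i.e.\ $\lambda\in\fq$. Now put $U=U_1\oplus\cdots\oplus U_1\subseteq\fqm^{2j}=\fqm^{k}$ ($j$ copies), so that $\dim_{\fq}U=jm=km/2$. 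To check that $L_U$ is scattered, take $v\in U\setminus\{0\}$: some coordinate-block $v^{(i)}$ of $v$ lies in $U_1\setminus\{0\}$, and since $U_1$ is scattered the $\fq$-space $\{\lambda\in\fqm\colon\lambda v^{(i)}\in U_1\}$ equals $\fq$; as $\{\lambda\colon\lambda v\in U\}$ is contained in it and contains $\fq$, the point $\langle v\rangle_{\fqm}$ has weight $1$. Hence $L_U$ is maximum scattered, which settles all even $k$ for every $m$.

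For $k=2j+1$ odd we must have $m$ even, and here the direct-sum reduction fails: splitting off a projective line would leave us needing a maximum scattered linear set in $\PG(0,q^m)$ of rank $m/2$, which does not exist once $m>2$. This is the main obstacle, and its resolution is exactly what the explicit constructions of \cite{blokhuis2000scattered}, completed in the remaining boundary cases by \cite{ball2000linear,bartoli2018maximum,csajbok2017maximum}, provide. The strategy I would follow uses the intermediate field: fix $\alpha\in\fqm$ with $\fqm=\mathbb{F}_{q^{m/2}}(\alpha)$, so $\fqm^{k}=\mathbb{F}_{q^{m/2}}^{k}\oplus\alpha\,\mathbb{F}_{q^{m/2}}^{k}$ as an $\mathbb{F}_{q^{m/2}}$-space, and take $U$ to be a ``twisted graph'' $\{v+\alpha\,\phi(v)\colon v\in\mathbb{F}_{q^{m/2}}^{k}\}$ for a carefully chosen $\mathbb{F}_{q^{m/2}}$-\emph{semilinear} map $\phi$. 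Such a $U$ is an $\fq$-subspace of $\fq$-dimension $(m/2)k=km/2$, since $v\mapsto v+\alpha\phi(v)$ is $\fq$-linear and injective. It is essential that $\phi$ be only semilinear and not $\mathbb{F}_{q^{m/2}}$-linear (which would make $U$ an $\mathbb{F}_{q^{m/2}}$-subspace, and thus every point would have weight $\ge m/2$); the verification that, for a suitable $\phi$, the condition $\lambda\,(v+\alpha\phi(v))\in U$ with $\lambda=\lambda_0+\alpha\lambda_1$ forces first $\lambda_1=0$ and then $\lambda_0\in\fq$ is the technical core of the construction.

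Combining the two cases yields, for all $k,m$ with $km$ even and every $q$, a scattered $\fq$-linear set of $\PG(k-1,q^m)$ of rank $km/2$. The hard part is, as indicated, the odd-$k$/even-$m$ case: it admits no soft reduction, and one must produce in a single step a subspace of exactly the dimension $km/2$ that remains scattered with respect to the full field $\fqm$ rather than merely with respect to $\mathbb{F}_{q^{m/2}}$, which is why the argument ultimately leans on the explicit constructions assembled across the cited references.
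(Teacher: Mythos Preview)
The paper does not prove this theorem: it is recorded as a known result, with the proof attributed to the combination of \cite{blokhuis2000scattered,ball2000linear,bartoli2018maximum,csajbok2017maximum}. Your treatment is consistent with this: the even-$k$ argument via the direct sum of copies of $\{(x,x^q)\colon x\in\fqm\}$ is correct and is the standard construction, and for odd $k$ with $m$ even you rightly observe that no soft reduction is available and explicitly defer to the same cited references for the explicit constructions. In that sense your proposal and the paper agree exactly on what is proved here and what is quoted.

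One remark on the odd-$k$ sketch: the twisted-graph idea $U=\{v+\alpha\phi(v)\colon v\in\F_{q^{m/2}}^{k}\}$ is a reasonable heuristic, but note that by a further direct-sum reduction (splitting $\fqm^{2j+1}$ as $\fqm^{3}\oplus\fqm^{2(j-1)}$) the genuinely hard case is $k=3$ with $m$ even, and it is precisely this case that required the successive papers \cite{ball2000linear,bartoli2018maximum,csajbok2017maximum} to close; producing a single $\phi$ that works uniformly in $q$ and $m$ is not straightforward, and your sketch should not be read as suggesting otherwise.
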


We also recall the characterization of the subspaces whose associated linear set coincides with the entire space.

\begin{proposition}\label{prop:linset=entirespace}(see \cite[Lemma 26]{adriaensen2024minimum})
Let $L_U$ be an $\fq$-linear set of rank $n$ in $\mathrm{PG}(k-1,q^m)$. We have that $L_U=\mathrm{PG}(k-1,q^m)$ if and only if $n\geq m(k-1)+1$.
\end{proposition}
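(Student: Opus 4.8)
The statement is an equivalence, and the plan is to prove the two implications by short, independent dimension-counting arguments. For the implication ``$n\ge m(k-1)+1 \Rightarrow L_U=\PG(k-1,q^m)$'', I would show that every point of $\PG(k-1,q^m)$ lies in $L_U$. Fix a point $P=\langle v\rangle_{\fqm}$ and set $W=\langle v\rangle_{\fqm}$, now regarded as an $\fq$-subspace of $\mathbb{V}$; then $\dim_{\fq}W=m$ while $\dim_{\fq}\mathbb{V}=mk$. By Grassmann's identity applied to $U$ and $W$ inside $\mathbb{V}$,
\[
\dim_{\fq}(U\cap W)\ \ge\ \dim_{\fq}U+\dim_{\fq}W-\dim_{\fq}\mathbb{V}\ =\ n+m-mk\ =\ n-m(k-1),
\]
which is at least $1$ by hypothesis. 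Hence $U\cap W\neq\{0\}$, so $P=\langle v\rangle_{\fqm}\in L_U$; as $P$ is arbitrary, $L_U=\PG(k-1,q^m)$.

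For the converse I would argue by comparing cardinalities. If $L_U=\PG(k-1,q^m)$, then $|L_U|$ equals the number of points of $\PG(k-1,q^m)$, namely $\frac{q^{mk}-1}{q^m-1}=1+q^m+\cdots+q^{(k-1)m}$, which is strictly larger than $q^{(k-1)m}$. On the other hand, \eqref{eq:card} gives $|L_U|\le\frac{q^n-1}{q-1}<q^n$, the last inequality holding for every $q\ge 2$. Combining the two estimates yields $q^{(k-1)m}<q^n$, hence $m(k-1)<n$, and since $n$ is an integer, $n\ge m(k-1)+1$.

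I do not expect a genuine obstacle here; the proof is essentially two lines in each direction. The only points requiring care are the bookkeeping of $\fq$-dimensions—namely that a single $\fqm$-rational point corresponds to an $m$-dimensional $\fq$-subspace, while the ambient space has $\fq$-dimension $mk$—and the use of the \emph{strict} inequality $\frac{q^n-1}{q-1}<q^n$, which is what lets the strict inequality $m(k-1)<n$ between exponents be upgraded to the clean bound $n\ge m(k-1)+1$.
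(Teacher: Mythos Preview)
Your proof is correct in both directions. Note, however, that the paper does not supply its own proof of this proposition: it is quoted as a preliminary fact with a reference to \cite[Lemma~26]{adriaensen2024minimum}, so there is no argument in the paper to compare against. Your dimension-count for the forward implication and your cardinality estimate for the converse are the standard (and essentially only) arguments for this statement, and they go through without issue.
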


The following mapping describes a duality acting on the $\fq$-subspaces of $\mathbb{V}$ which preserves the $\fqm$-linearity.

Let $\sigma \colon \mathbb{V}\times \mathbb{V} \rightarrow \mathbb{F}_{q^m}$ be a non-degenerate reflexive sesquilinear form over $\mathbb{V}$. Define
$\sigma' \colon \mathbb{V} \times \mathbb{V} \rightarrow \mathbb{F}_q$ by $\sigma':(u,v)\mapsto \mathrm{Tr}_{q^m/q}(\sigma(u,v))$.
If we regard $\mathbb{V}$ as an $\F_q$-vector space, then $\sigma^\prime$ turns out to be a non-degenerate reflexive sesquilinear form on $\mathbb{V}$.
Let $\perp$ and $\perp'$ be the orthogonal complement maps defined by $\sigma$ and $\sigma'$ on the lattices of $\F_{q^m}$-linear and 
$\F_q$-linear subspaces, respectively.
The following properties hold (see \cite[Section~2]{polverino2010linear} for the details).

\begin{proposition}\label{prop:dualityproperties}
With the above notation,
\begin{itemize}
    \item[(i)] $\dim_{\F_{q^m}}(W)+\dim_{\F_{q^m}}(W^\perp)=k$, for every $\F_{q^m}$-subspace $W$ of $\mathbb{V}$.
    \item[(ii)] $\dim_{\F_{q}}(U)+\dim_{\F_{q}}(U^{\perp'})=mk$, for every $\F_{q}$-subspace $U$ of $\mathbb{V}$.
    \item[(iii)] $T_1\subseteq T_2$ implies  $T_1^{\perp'}\supseteq T_2^{\perp'}$, for every $\F_q$-subspaces $T_1,T_2$ of $\mathbb{V}$.
    \item[(iv)] $W^\perp=W^{\perp'}$, for every $\F_{q^m}$-subspace $W$ of $\mathbb{V}$.
    \item[(v)] Let $W$ and $U$ be an $\F_{q^m}$-subspace and an $\F_q$-subspace of $\fqm^k$ of dimension $s$ and $t$, respectively. Then
    \begin{equation}\label{eq:dualweight} \dim_{\F_q}(U^{\perp'}\cap W^{\perp'})-\dim_{\F_q}(U\cap W)=mk-t-sm. \end{equation}
    \item[(vi)] Let $\sigma$, $\sigma_1$ be non-degenerate reflexive sesquilinear forms over $\mathbb{V}$ and define $\smash{\sigma^\prime}$, $\smash{\sigma_1^\prime}$, $\perp$, $\perp_1$, $\perp'$, and $\smash{\perp_1'}$ as above. Then there exists an invertible $\F_{q^m}$-semilinear map $f$ such that $\smash{f(U^{\perp'})=U^{\perp_1'}}$, i.e. $\smash{U^{\perp'}}$ and $\smash{U^{\perp_1'}}$ are $\Gamma\mathrm{L}(k,q^m)$-equivalent. 
\end{itemize}
\end{proposition}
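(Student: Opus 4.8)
The plan is to deduce all six items from two standard ingredients: the rank--nullity property of non-degenerate (sesqui)linear forms, and the non-degeneracy of the trace map $\Tr_{q^m/q}\colon\fqm\to\fq$. For (i) I would use that non-degeneracy of $\sigma$ turns $v\mapsto\sigma(\cdot,v)$ into a bijection from $\mathbb{V}$ onto its $\fqm$-dual; composing with restriction to the $\fqm$-subspace $W$ gives a surjection $\mathbb{V}\to W^{*}$ with kernel $W^{\perp}$, so $\dim_{\fqm}W+\dim_{\fqm}W^{\perp}=k$. For (ii) the only new point is that $\sigma'$ is a non-degenerate $\fq$-bilinear form on the $mk$-dimensional $\fq$-space $\mathbb{V}$: if $\sigma'(u,\cdot)\equiv 0$ with $u\neq0$, then $v\mapsto\sigma(u,v)$ is a nonzero, hence surjective, map onto $\fqm$, so $\Tr_{q^m/q}(\sigma(u,\cdot))$ has image all of $\fq\neq\{0\}$, a contradiction; the same rank--nullity argument, now over $\fq$, then yields $\dim_{\fq}U+\dim_{\fq}U^{\perp'}=mk$. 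Item (iii) is immediate from the definition of $\perp'$.

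For (iv), the inclusion $W^{\perp}\subseteq W^{\perp'}$ is clear; for the reverse I would exploit that $W$ is $\fqm$-linear, so for $v\in W^{\perp'}$ one gets $\Tr_{q^m/q}(\lambda\,\sigma(w,v))=0$ for every $w\in W$ and every $\lambda\in\fqm$, whence $\sigma(w,v)=0$ by non-degeneracy of the trace form of $\fqm/\fq$ (alternatively, compare $\fq$-dimensions via (i) and (ii)). For (v), I would combine the identity $(U+W)^{\perp'}=U^{\perp'}\cap W^{\perp'}$, which is immediate from the definition, with $\dim_{\fq}(U+W)=t+sm-\dim_{\fq}(U\cap W)$ and (ii) applied to $U+W$; this gives $\dim_{\fq}(U^{\perp'}\cap W^{\perp'})=mk-t-sm+\dim_{\fq}(U\cap W)$, which rearranges to the stated identity.

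The last item, (vi), is the one I expect to require genuine care. Here I would fix a basis and write $\sigma(u,v)=uA\psi(v)^{\top}$ and $\sigma_{1}(u,v)=uA_{1}\psi_{1}(v)^{\top}$ with $A,A_{1}\in\GL(k,q^m)$ and $\psi,\psi_{1}$ field automorphisms of $\fqm$ acting coordinatewise; an elementary computation then produces an $\fqm$-linear automorphism $h$ of $\mathbb{V}$ and a field automorphism $\tau$ with $\sigma_{1}(u,v)=\sigma(h(u),\tau(v))$, hence $\sigma_{1}'(u,v)=\sigma'(h(u),\tau(v))$ after applying $\Tr_{q^m/q}$. Chasing the definition of the orthogonal complement shows $v\in U^{\perp_1'}$ if and only if $\tau(v)\in h(U)^{\perp'}$, and since $h$ is an invertible $\fqm$-linear map one has $h(U)^{\perp'}=g(U^{\perp'})$ for a suitable $g\in\GL(k,q^m)$ (the inverse of the $\sigma$-adjoint of $h$); consequently $f:=\tau^{-1}\circ g$ is an invertible $\fqm$-semilinear map (i.e.\ an element of $\Gamma\mathrm{L}(k,q^m)$) with $f(U^{\perp'})=U^{\perp_1'}$. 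The main obstacle throughout is exactly the bookkeeping in (vi)---keeping track of which argument of $\sigma$ is semilinear and correctly identifying the adjoint---so that the resulting $f$ is genuinely $\fqm$-semilinear rather than merely $\fq$-linear; everything else is routine once the non-degeneracy of $\sigma'$ is in hand.
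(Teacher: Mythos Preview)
Your argument is correct in all six parts. The paper itself does not give a proof of this proposition: it is stated with the remark ``see \cite[Section~2]{polverino2010linear} for the details'' and left at that, so there is nothing to compare your approach against. Your treatment of (i)--(v) is the standard one, and your handling of (vi), passing through an explicit $h\in\GL(k,q^m)$ and field automorphism $\tau$ with $\sigma_1(u,v)=\sigma(h(u),\tau(v))$ and then unwinding the adjoint, is exactly the computation one expects.

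One small cosmetic point: you call $\sigma'$ an $\fq$-\emph{bilinear} form, but if the companion automorphism $\theta$ of $\sigma$ does not fix $\fq$ pointwise (which can happen when $q$ is not prime), then $\sigma'$ is genuinely $\fq$-sesquilinear with companion automorphism $\theta|_{\fq}$. This does not affect any of your arguments, since rank--nullity and the identity $(U+W)^{\perp'}=U^{\perp'}\cap W^{\perp'}$ hold for non-degenerate reflexive sesquilinear forms just as well; it is only a matter of terminology.
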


We will also need the action of the duality restricted on subspaces of the entire space, for that we will use the notation $\perp^*$ (or $\perp'^*$) to distinguish from the duality acting on the entire space.

We refer to \cite{lavrauw2015field} and \cite{polverino2010linear} for comprehensive references on linear sets.

\subsection{Rank-metric codes and their geometry}

Rank-metric codes are error-correcting codes where the distance between codewords is defined using the rank metric, given by the rank of the difference of two matrices over a finite field. These codes are particularly effective in settings where data are represented as matrices, such as in network coding and distributed storage; see \cite{bartz2022rank,gorla2021rank}.
We will describe rank-metric codes in the vectorial framework.
The \emph{rank (weight)} $w(v)$ of a vector $v=(v_1,\ldots,v_n) \in \F_{q^m}^n$ is defined as $w(v)=\dim_{\fq} (\langle v_1,\ldots, v_n\rangle_{\fq})$. 

A \emph{(linear vector) rank-metric code} $\C $ is an $\F_{q^m}$-subspace of $\F_{q^m}^n$ endowed with the rank distance, where such a distance is defined as $d(x,y)=w(x-y)$, where $x, y \in \F_{q^m}^n$. 
A $k$-dimensional rank-metric code $\C$ in $\F_{q^m}^n$ with minimum distance $d$ is also referred to as an $[n, k, d]_{q^m/q}$ \emph{code} (or simply an $[n, k]_{q^m/q}$ \emph{code}).
For other notation and terminologies we refer to \cite{alfarano2022linear,Randrianarisoa2020ageometric,sheekey2016new}.

Given a rank-metric code $\C$ and a non-negative integer $i$, we define \[
A_i=A_i (\C):=\lvert \{c \in \C \colon w(c)=i\} \rvert.\]
The sequence $(A_i)_{i\geq 0}$ is said to be the \emph{weight distribution} of $\C$.


By the classification of $\F_{q^m}$-linear isometry of $\F_{q^m}^n$ (see \cite{berger2003isometries}), we say that two $[n,k]_{q^m/q}$ codes $\C,\C$ are \emph{(linearly) equivalent} if and only if there exists a matrix $A \in \mathrm{GL}(n,q)$ such that
$\C'=\C A=\{vA : v \in \C\}$. 


Similarly to the Hamming metric, one can prove a Singleton-like bound for rank-metric codes. 

\begin{theorem}(see \cite{de78}) \label{th:singletonrank}
    Let $\C$ be an $[n,k,d]_{q^m/q}$ code.
Then 
\begin{equation}\label{eq:boundgen}
mk \leq \max\{m,n\}(\min\{m,n\}-d+1).\end{equation}
\end{theorem}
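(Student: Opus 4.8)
The plan is to mimic the puncturing proof of the classical Singleton bound, carried out on a suitable matrix model of $\C$. We may assume $\C\neq\{0\}$, otherwise $mk=0$ and there is nothing to prove. Fix an $\F_q$-basis of $\F_{q^m}$ and use it to identify each $v=(v_1,\dots,v_n)\in\F_{q^m}^n$ with the $m\times n$ matrix over $\F_q$ whose $i$-th column records the coordinates of $v_i$ with respect to that basis. Under this identification $\C$ becomes an $\F_q$-linear matrix code $\cD\subseteq\F_q^{m\times n}$ of cardinality $q^{mk}$, and the rank weight of $v$ equals the ordinary matrix rank of its image. Since matrix rank is preserved by transposition, I may replace $\cD$ by its transpose $\cD^{\top}$ whenever $m<n$; in either case I obtain an $\F_q$-linear matrix code in $\F_q^{r\times s}$ of cardinality $q^{mk}$ and minimum rank distance $d$, where $r=\max\{m,n\}$ and $s=\min\{m,n\}$. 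Note that $d\le s$, since every codeword has rank at most $\min\{m,n\}$, so $s-d+1\ge 1$.

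Next I would puncture. Consider the $\F_q$-linear map on $\F_q^{r\times s}$ that restricts a matrix to its first $s-d+1$ columns. I claim it is injective on the matrix code: if two codewords agree on those columns, their difference---again a codeword---is supported on the remaining $d-1$ columns, hence has rank at most $d-1<d$, and therefore must be zero. Consequently the $q^{mk}$ codewords inject into $\F_q^{r\times(s-d+1)}$, which yields $q^{mk}\le q^{r(s-d+1)}$, i.e.
\[ mk\le \max\{m,n\}\bigl(\min\{m,n\}-d+1\bigr), \]
as claimed.

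The only point that needs care is choosing the orientation of the matrix model correctly. Projecting $\C$ directly onto $n-d+1$ of its $n$ coordinates would only give $k\le n-d+1$, i.e.\ $mk\le m(n-d+1)$, which is strictly weaker than the target bound exactly when $n>m$. Transposing first (when $m<n$), so that the $r$ ``long'' directions become the rows, and then deleting only $d-1$ of the $s$ ``short'' columns is precisely what upgrades the estimate from $m(n-d+1)$ to $\max\{m,n\}(\min\{m,n\}-d+1)$. I expect this bookkeeping about which dimension plays the role of rows versus columns to be the only genuine subtlety; the rest is the rank-metric analogue of the Hamming Singleton argument, essentially as in~\cite{de78}.
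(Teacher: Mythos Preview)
Your argument is correct; it is the standard puncturing proof of the rank-metric Singleton bound, and the bookkeeping about transposing when $m<n$ is handled properly. Note, however, that the paper does not supply its own proof of this statement: it simply quotes the bound with a reference to~\cite{de78}. So there is nothing to compare against beyond observing that your proof is a faithful rendering of the classical argument from the cited source.
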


An $[n,k,d]_{q^m/q}$ code is called \emph{Maximum Rank Distance code} (or in a short form \emph{MRD code}) if its parameters reach the bound \eqref{eq:boundgen}.

A key point in the theory of rank-metric codes has been the geometric viewpoint via systems. Indeed, this points out a connection between rank-metric codes and linear sets. We recall this connection.

\begin{theorem}(see \cite{Randrianarisoa2020ageometric}) \label{th:connection}
Let $\C$ be a non-degenerate\footnote{i.e. the dimension of the $\fq$-columnspan of any generator matrix of $\C$ is $n$.} $[n,k,d]_{q^m/q}$ code and let $G$ be a generator matrix.
Let $U \subseteq \F_{q^m}^k$ be the $\F_q$-span of the columns of $G$.
The rank weight of an element $x G \in \C$, with $x \in \F_{q^m}^k$ is
\begin{equation}\label{eq:relweight}
w(x G) = n - \dim_{\fq}(U \cap x^{\perp}),\end{equation}
where $x^{\perp}=\{y \in \F_{q^m}^k \colon x \cdot y=0\}.$ In particular,
\begin{equation} \label{eq:distancedesign}
d=n - \max\left\{ \dim_{\fq}(U \cap H)  \colon H\mbox{ is an } \F_{q^m}\mbox{-hyperplane of }\F_{q^m}^k  \right\}.
\end{equation}
\end{theorem}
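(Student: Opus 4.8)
The plan is to read off the rank weight of each codeword directly from an $\fq$-linear evaluation map attached to the columns of $G$, and then to obtain the minimum distance by optimizing over the kernels of these maps.

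First I would fix notation: write $G=(g_1\mid\dots\mid g_n)$ with columns $g_1,\dots,g_n\in\fqm^k$, so that $U=\langle g_1,\dots,g_n\rangle_{\fq}$ and, by the non-degeneracy hypothesis, $\dim_{\fq}(U)=n$. For a fixed $x\in\fqm^k$, I would introduce the $\fq$-linear map $\varphi_x\colon U\to\fqm$, $u\mapsto x\cdot u$. Since $xG=(x\cdot g_1,\dots,x\cdot g_n)$ and the $g_i$ span $U$ over $\fq$, the image of $\varphi_x$ is exactly the $\fq$-span of the entries of $xG$, so $w(xG)=\dim_{\fq}(\operatorname{im}\varphi_x)$; its kernel is $\{u\in U:x\cdot u=0\}=U\cap x^{\perp}$. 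Applying the rank–nullity theorem to $\varphi_x$ over $\fq$ then yields $n=\dim_{\fq}(U\cap x^{\perp})+w(xG)$, which is precisely \eqref{eq:relweight}.

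Second, to obtain \eqref{eq:distancedesign} I would take the minimum of \eqref{eq:relweight} over the nonzero codewords. Because $G$ has $\fqm$-rank $k$, its columns span $\fqm^k$ over $\fqm$, so $xG=0$ forces $x=0$; hence the nonzero codewords are exactly the $xG$ with $x\neq 0$, and $d=n-\max\{\dim_{\fq}(U\cap x^{\perp}):x\in\fqm^k\setminus\{0\}\}$. Finally I would use that $x\mapsto x^{\perp}$ is a surjection from $\fqm^k\setminus\{0\}$ onto the set of $\fqm$-hyperplanes of $\fqm^k$ (every hyperplane is the kernel of a nonzero linear functional), so the maximum may be rewritten as a maximum over hyperplanes $H$, giving \eqref{eq:distancedesign}.

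The whole argument is light, and there is no real obstacle; the only points that require care are keeping straight where the two forms of non-degeneracy enter — the equality $\dim_{\fq}(U)=n$, needed for rank–nullity to produce the clean statement, and the full $\fqm$-row-rank of $G$, needed so that the minimum defining $d$ genuinely ranges over all nonzero $x$. Neither is difficult, but these are the places where the hypotheses actually do the work, so I would state them carefully at the outset rather than glossing over them.
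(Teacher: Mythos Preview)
Your argument is correct: the evaluation map $\varphi_x$ cleanly identifies $w(xG)$ with the $\fq$-rank of its image, rank--nullity gives \eqref{eq:relweight}, and the passage from nonzero $x$ to hyperplanes via $x\mapsto x^{\perp}$ is exactly the right way to obtain \eqref{eq:distancedesign}. Note, however, that the paper does not supply its own proof of this theorem; it is quoted as a known result from \cite{Randrianarisoa2020ageometric}, so there is no in-paper argument to compare against---your proof is simply a valid self-contained justification of the cited statement.
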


In other words, any non-degenerate code can be studied via an associated system; an $[n,k,d]_{q^m/q}$ \emph{system} $U$ is an $\F_q$-subspace of $\F_{q^m}^k$ of dimension $n$, such that
$ \langle U \rangle_{\F_{q^m}}=\F_{q^m}^k$ and
$$ d=n-\max\left\{\dim_{\F_q}(U\cap H) \mid H \textnormal{ is an $\F_{q^m}$-hyperplane of }\F_{q^m}^k\right\}.$$

Actually, the above result allows us to give a one-to-one correspondence between equivalence classes of non-degenerate $[n,k,d]_{q^m/q}$ codes and equivalence classes (via the action of $\mathrm{GL}(k,q^m)$) of $[n,k,d]_{q^m/q}$ systems, see \cite{Randrianarisoa2020ageometric}.
The system $U$ and the code $\C$ and in Theorem \ref{th:connection} are said to be \emph{associated}.


Moreover, we point out that the semilinear inequivalence on linear rank-metric codes can be read also on the associated systems via the action of $\mathrm{\Gamma L}(k,q^m)$ on the $\fq$-subspaces of $\F_{q^m}^n$; see \cite{sheekeysurvey} and \cite{sheekeyVdV}.

\section{Generalities on clubs}\label{sec:genclubs}

In this section, we will review the known results and constructions of clubs, and describe some basic properties.

Up to now, very few examples of $i$-clubs are known, which have been found and summarized in \cite{de2016linear}. The classical example is given by the one defined by the trace function, i.e. $\mathrm{Tr}_{q^m/q}(x)=x+\ldots+x^{q^{m-1}}$. Indeed, the linear set $L_{\mathrm{Tr}_{q^m/q}(x)}=\{\la (x,\mathrm{Tr}_{q^m/q}(x))\ra_{\fqm} \colon x \in \fqm^*\}$ is an example of $(m-1)$-club in $\PG(1,q^m)$ and in \cite[Theorem 3.7]{csajbok2018classes}, it has been proved that every $(m-1)$-club is $\mathrm{PGL}(2,q^m)$-equivalent to $L_{\mathrm{Tr}_{q^m/q}(x)}$.

A further important example is the following, which extends the previous one. Let $m=\ell n$, $i=n(\ell-1)$, $\gcd(s,n)=1$ and $\sigma\colon x \in \fqm \mapsto x^{q^s} \in \fqm$. Then the linear set $L_T=\{\la (x,T(x))\ra_{\fqm} \colon x \in \fqm^*\}$ where 
\begin{equation}\label{eq:firstKMarc}
 T(x)=\left(\mathrm{Tr}_{q^{\ell n}/q^{n}}\circ\sigma\right)(x) \in \fqm[x]
\end{equation}
is an $i$-club in $\mathrm{PG}(1,q^{m})$ (see \cite[Theorem 3.3]{de2016linear}).

In \cite{de2016linear}, two additional families of club linear sets were detected; see \cite[Lemma 2.12]{de2016linear} and \cite[Lemma 3.6]{de2016linear}.
Recently, a complete classification of $(m-2)$-clubs in $\PG(1, q^m)$, along with their polynomial representations, was provided in \cite{napolitano2022clubs}.
We also note that polynomials defining $i$-clubs were also given in \cite[Corollary 5.5]{bartoli2022r} for the case $m = 4$ (see also \cite{csajbok2018maximum}) and in \cite[Corollary 6.3]{bartoli2022r}.



We are now going to describe some properties of clubs which will be useful for the results of the next sections. We start with the property that if $L_U$ is an $i$-club with $i>1$ then it contains an $(i-1)$-club.

\begin{lemma}\label{lemma:subclub}
    Let $L_U$ be an $i$-club of rank $n$ in $\PG(k-1,q^m)$. There exists an $(i-1)$-club $L_{U'}$ of rank $n-1$, such that $L_{U'}\subseteq L_U$. 
    In particular, there exists an $(i-j)$-club $L_{U'}$ of rank $n-j$ for any $j\in \{1,\ldots,i-1\}$.
\end{lemma}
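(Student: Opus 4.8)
The plan is entirely elementary linear algebra: we realize $L_{U'}$ by intersecting $U$ with a carefully chosen $\fq$-hyperplane. Write the unique point of weight $i$ of $L_U$ as $P=\langle w\rangle_{\fqm}$, so that $W=\langle w\rangle_{\fqm}$ satisfies $\dim_{\fq}(U\cap W)=i$, whereas $\dim_{\fq}(U\cap V)=1$ for every other point $\langle V\rangle_{\fqm}$ of $L_U$. Since $U\cap W\neq\{0\}$, pick a nonzero $u_0\in U\cap W$, complete it to an $\fq$-basis $u_0,u_1,\dots,u_{n-1}$ of $U$, and set $U'=\langle u_1,\dots,u_{n-1}\rangle_{\fq}$. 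Then $U'$ is an $\fq$-hyperplane of $U$, so $\mathrm{Rank}(L_{U'})=n-1$ and $L_{U'}\subseteq L_U$; moreover $u_0\notin U'$, hence $U\cap W\not\subseteq U'$.

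The next step is to compute the weights in $L_{U'}$. For the distinguished point, $U'\cap W$ is the intersection of the $i$-dimensional $\fq$-space $U\cap W$ with the hyperplane $U'$ of $U$; since $U\cap W\not\subseteq U'$, this lowers the dimension by exactly one, so $w_{L_{U'}}(P)=i-1$. For any other point $Q=\langle V\rangle_{\fqm}$ of $L_{U'}$ we have $Q\in L_U$ (because $L_{U'}\subseteq L_U$), so $w_{L_U}(Q)=1$ and therefore $1\leq\dim_{\fq}(U'\cap V)\leq\dim_{\fq}(U\cap V)=1$, i.e.\ $w_{L_{U'}}(Q)=1$. Thus in $L_{U'}$ every point has weight $1$ except $P$, which has weight $i-1\geq 1$: this is exactly the assertion that $L_{U'}$ is an $(i-1)$-club of rank $n-1$ contained in $L_U$ (for $i=2$ the statement reads that $L_{U'}$ is scattered, i.e.\ a $1$-club).

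For the final assertion I would simply iterate the construction: starting from $L_U$ one obtains an $(i-1)$-club of rank $n-1$, then an $(i-2)$-club of rank $n-2$, and so on, producing an $(i-j)$-club of rank $n-j$ for every $j\in\{1,\dots,i-1\}$ (the index stays $\geq 1$ at every stage). There is no serious obstacle here: the only two points deserving a word of care are that a hyperplane of $U$ missing $U\cap W$ exists at all — which is the basis-completion argument above and uses only $U\cap W\neq\{0\}$ — and that intersecting $U\cap W$ with such a hyperplane drops its dimension by exactly one rather than leaving it unchanged, which is the standard codimension estimate of a subspace against a hyperplane.
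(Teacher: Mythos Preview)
Your proof is correct and follows essentially the same approach as the paper: both arguments remove from $U$ a single $\fq$-basis vector lying in $U\cap\langle w\rangle_{\fqm}$ to produce the hyperplane $U'$, then check that the weight at $P$ drops to $i-1$ while all other weights remain $1$, and finish by iterating. The only cosmetic difference is that the paper first fixes a full basis $v_1,\dots,v_i$ of $U\cap\langle w\rangle_{\fqm}$ before extending to $U$ and discarding $v_1$, whereas you pick just one vector $u_0$ and use the codimension-one argument to conclude $\dim_{\fq}(U'\cap W)=i-1$; both routes are equivalent.
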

\begin{proof}
    Let $\langle v \rangle_{\fqm} \in L_U$ such that $w_{L_U}(\langle v \rangle_{\fqm})=i$. Consider an $\fq$-basis $B=\{v_1,\ldots, v_i\}$ of $\langle v \rangle_{\fqm}\cap U$. We can extend this basis to an $\fq$-basis of $U$, namely $B \cup \{w_{i+1},\cdots, w_{n}\}$. Define $U'=\langle v_2, \cdots, v_i, w_{i+1},\cdots w_n\rangle_{\fq}$. It is easy to see that $L_{U'}$ is an $(i-1)$-club of rank $n-1$ such that $L_{U'}\subseteq L_U$.
    Using finite induction, we obtain the second part of the assertion.
\end{proof}

The following result gives a method to get examples of clubs in projective spaces via a direct sum of an $i$-club in a smaller space and a scattered space.

\begin{lemma}\label{le:iclubandscatered}
    Let $T_1$ and $T_2$ be two subspaces of $\mathrm{PG}(k-1,q^m)$ such that $T_1\oplus T_2=\mathrm{PG}(k-1,q^m)$.
    Let $L_{U_1}$ and $L_{U_2}$ be two $\fq$-linear sets such that $L_{U_i}\subseteq T_i$ for any $i \in \{1,2\}$ having rank $n_1$ and $n_2$, respectively. Suppose that $L_{U_1}$ is an $i$-club and $L_{U_2}$ is a scattered $\fq$-linear set. We have that $L_U$ is an $i$-club of rank $n_1+n_2$ in $\mathrm{PG}(k-1,q^m)$, where
    $$U=U_1\oplus U_2.$$
\end{lemma}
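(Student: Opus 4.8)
The plan is to work directly with weights of points, using the fact that $T_1 \oplus T_2 = \PG(k-1,q^m)$ translates into a direct sum decomposition $\mathbb{V} = W_1 \oplus W_2$ of the ambient $\fqm$-vector space, where $T_i = \PG(W_i, \fqm)$, and that $U_i \subseteq W_i$. Since $U = U_1 \oplus U_2$ with $U_1 \subseteq W_1$ and $U_2 \subseteq W_2$, the sum is automatically direct and $\dim_{\fq}(U) = n_1 + n_2$, so $L_U$ has rank $n_1 + n_2$; this is the easy bookkeeping part.

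The core of the argument is to compute $w_{L_U}(\langle v \rangle_{\fqm})$ for an arbitrary point $\langle v \rangle_{\fqm} \in L_U$. First I would observe that every point of $L_U$ lies in $L_{U_1} \cup L_{U_2} \cup (\text{points not in } T_1 \cup T_2)$; more precisely, writing $v = v_1 + v_2$ with $v_i \in W_i$, the line (or point) structure forces a case split. The key linear-algebra fact is: for a one-dimensional $\fqm$-subspace $\langle v \rangle_{\fqm}$, one has $\langle v \rangle_{\fqm} \cap U = (\langle v \rangle_{\fqm} \cap U) $, and I want to show this intersection has dimension at most $\max(w_{L_{U_1}}, w_{L_{U_2}})$ contributions — concretely, that if $u = \lambda v \in U$ with $\lambda \in \fqm^*$, then writing $u = u_1 + u_2$ with $u_j \in U_j$ and $v = v_1 + v_2$ with $v_j \in W_j$, the relation $u_j = \lambda v_j$ holds in each component by directness of $W_1 \oplus W_2$. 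Hence $\langle v \rangle_{\fqm} \cap U$ injects into $(\langle v_1\rangle_{\fqm} \cap U_1) \times (\langle v_2 \rangle_{\fqm} \cap U_2)$ via $u \mapsto (u_1, u_2)$, but in fact the scalar $\lambda$ is common, so the image is contained in the set of pairs with a common scalar; I would argue that $\dim_{\fq}(\langle v\rangle_{\fqm}\cap U)$ equals $\dim_{\fq}(\langle v_1\rangle_{\fqm}\cap U_1)$ if $v_2 = 0$, equals $\dim_{\fq}(\langle v_2\rangle_{\fqm}\cap U_2)$ if $v_1 = 0$, and equals $1$ if both $v_1, v_2 \neq 0$ (since then a scalar $\lambda$ with $\lambda v_1 \in U_1$ and $\lambda v_2 \in U_2$ is pinned down up to the common intersection, which generically is just $\fq$; here one uses that $L_{U_2}$ is scattered so weight-one points behave well, and that a point of $T_1$ has all its $U$-mass inside $U_1$).

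From this weight computation the conclusion follows: points of $L_U$ inside $T_1$ are exactly the points of $L_{U_1}$ with the same weight, so there is exactly one point of weight $i$ (the special point of the $i$-club $L_{U_1}$) and all other points of $L_{U_1}$ have weight $1$; points inside $T_2$ are the points of $L_{U_2}$, all of weight $1$ since $L_{U_2}$ is scattered; and every remaining point $\langle v \rangle_{\fqm}$ with $v_1, v_2 \neq 0$ has weight exactly $1$. Therefore $L_U$ has a unique point of weight $i$ and all others of weight $1$, i.e. $L_U$ is an $i$-club of rank $n_1 + n_2$, as claimed. I expect the main obstacle to be the careful case $v_1, v_2 \neq 0$: one must rule out that such a "mixed" point accidentally acquires weight larger than one, and the clean way is to show $\langle v \rangle_{\fqm} \cap U \subseteq \langle v_1 \rangle_{\fqm}$-direction forces $\dim_{\fq}(\langle v\rangle_{\fqm}\cap U) = \dim_{\fq}\{\lambda \in \fqm : \lambda v_1 \in U_1,\ \lambda v_2 \in U_2\} \le \dim_{\fq}(\{\lambda : \lambda v_1 \in U_1\}) = w_{L_{U_1}}(\langle v_1\rangle_{\fqm})$, and symmetrically this is $\le w_{L_{U_2}}(\langle v_2 \rangle_{\fqm}) = 1$, giving weight exactly $1$.
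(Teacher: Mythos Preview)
Your argument is correct and follows essentially the same route as the paper's proof: both use the decomposition $\mathbb{V}=W_1\oplus W_2$ to deduce that $\lambda v\in U$ forces $\lambda v_j\in U_j$ componentwise, and then invoke the scatteredness of $L_{U_2}$ to conclude that any point of weight greater than one must have $v_2=0$ and hence coincide with the unique heavy point of $L_{U_1}$. The only stylistic difference is that the paper argues contrapositively (assume $w_{L_U}(Q)>1$ and show $Q=P$), whereas you perform an explicit three-way case split on $(v_1,v_2)$; the underlying linear-algebra observation is identical.
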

\begin{proof}
     Clearly, $L_U$ is an $\fq$-linear set in $\mathrm{PG}(k-1,q^m)$ of rank $n_1+n_2$. We are only required to show that $L_U$ is an $i$-club. 
    Write $T_1=\mathrm{PG}(W_1,\fqm)$ and $T_2=\mathrm{PG}(W_2,\fqm)$, where $W_1$ and $W_2$ are $\fqm$-subspaces of $\mathbb{V}$ with $W_1\oplus W_2$. Let $P=\langle u \rangle_{\fqm}$ be the only point of $L_{U_1}$ having weight greater than one. 
    Consider $Q=\langle w \rangle_{\F_{q^m}} \in L_{U}$, then we have that $w=w_1 + w_2$ for some $w_1 \in W_1$ and $w_2 \in W_2$. Suppose that $w_{L_{U}}(Q)>1$, we have that $\rho w \in \langle w \rangle_{\F_{q^m}}\cap U$ for every $\rho \in S$, where $S$ is an $\fq$-subspace of $\fqm$ of dimension larger than one.
    Since $U=U_1\oplus U_2$, $W_1\oplus W_2 =\mathbb{V}$ and since
\[\rho w=\rho w_1+\rho w_2,\] we have $\rho w_1 \in U_1$ and $\rho w_2 \in U_2$ for every $\rho \in S$. Since $L_{U_2}$ is scattered, we have $w_2=0$ and since $P$ is the only point of $L_{U_1}$ with weight greater than one, we prove that $P=Q$ and $w_{L_U}(P)=w_{L_U}(Q)=i$.
\end{proof}

\section{Bounds on the rank of an $i$-club}\label{sec:bound}

In this section, we aim to provide bounds on the rank of an $i$-club in $\mathrm{PG}(k-1,q^m)$. To achieve this, we will use a rank-metric code associated with the dual of the considered club. In this way, the weight distribution of the code is entirely determined by the weight distribution of the points of the club. We will then use MacWilliams identities to demonstrate the desired bounds.
First of all, we observe that we can consider a rank-metric code associated with the dual of an $i$-club due to the following property.

\begin{proposition}\label{prop:dualconsidercode}
    Let $L_U$ be an $i$-club in PG$(k-1,q^m)$ with $i<m$. We have $\langle U^{\perp'}\rangle_{\F_{q^m}}=\mathbb{V}$. 
\end{proposition}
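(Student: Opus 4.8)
The plan is to argue by contradiction using the duality properties from Proposition~\ref{prop:dualityproperties}, together with the characterization in Proposition~\ref{prop:linset=entirespace} of when a linear set fills the whole projective space. Suppose $\langle U^{\perp'}\rangle_{\fqm}=W$ is a proper $\fqm$-subspace of $\mathbb{V}$, say of dimension $s<k$. Then $L_{U^{\perp'}}\subseteq \PG(W,\fqm)=\PG(s-1,q^m)$, and since $\dim_{\fq}(U^{\perp'})=mk-n$ by Proposition~\ref{prop:dualityproperties}(ii), the linear set $L_{U^{\perp'}}$ has rank $mk-n$ inside a projective space of dimension $s-1$. The key first step is to bound $n$ from below: if $L_U$ is an $i$-club of rank $n$ in $\PG(k-1,q^m)$, then $L_U$ itself cannot be all of $\PG(k-1,q^m)$ when $i<m$ (since a scattered part plus a weight-$i$ point still needs room), and more to the point one expects $n$ to be comparatively small relative to $mk$. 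I would make this precise: since $U^{\perp'}$ lives in $W$ with $\dim_{\fqm}(W)=s$, Proposition~\ref{prop:linset=entirespace} applied inside $W$ forces $L_{U^{\perp'}}$ to equal $\PG(W,\fqm)$ as soon as $mk-n \ge m(s-1)+1$.

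The second step is to translate "$L_{U^{\perp'}}$ is the full space $\PG(W,\fqm)$" back into a statement about $U$ via the weight formula~\eqref{eq:dualweight}. If $L_{U^{\perp'}}=\PG(W,\fqm)$, then every point of $W$ lies in $U^{\perp'}$, which is very restrictive; dualizing, I would use Proposition~\ref{prop:dualityproperties}(v) with the hyperplanes of $\mathbb{V}$ (or $W^{\perp}$) to deduce that $U$ must have an unexpectedly large intersection with some $\fqm$-subspace, contradicting that $L_U$ is an $i$-club with $i<m$. Concretely: an $i$-club has exactly one point of weight $i$ and all others of weight one, so for any $\fqm$-hyperplane $H$ one has $\dim_{\fq}(U\cap H)$ controlled — roughly $w_{L_U}(H)\le n-1$ unless $H$ contains the heavy point, and even then the intersection is bounded because the scattered part contributes at most one dimension per point. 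Matching this against what~\eqref{eq:dualweight} demands when $W\subsetneq\mathbb{V}$ gives the contradiction. I should also handle the degenerate possibility $\dim_{\fqm}\langle U^{\perp'}\rangle_{\fqm}=0$, i.e.\ $U^{\perp'}=\{0\}$: that would mean $\dim_{\fq}(U^{\perp'})=0$, hence $n=mk$, so $L_U=\PG(k-1,q^m)$ and (by~\eqref{eq:pesivett} or Proposition~\ref{prop:linset=entirespace}) every point would have weight $\ge m\ge i+1$, impossible unless $k=1$, which is itself excluded since a $1$-dimensional space has no club structure with $i<m$ in the relevant range.

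I expect the main obstacle to be pinning down the lower bound on $n$ cleanly — that is, showing $mk-n$ is large enough (namely $\ge m(s-1)+1$ for the relevant $s$) so that Proposition~\ref{prop:linset=entirespace} can be invoked, while simultaneously knowing $s\le k-1$. The cleanest route is probably to avoid the bound altogether and argue directly: take a putative $\fqm$-hyperplane $H$ of $\mathbb{V}$ containing $\langle U^{\perp'}\rangle_{\fqm}$; then $U^{\perp'}\subseteq H$, so dualizing via Proposition~\ref{prop:dualityproperties}(iii)--(iv) gives $H^{\perp}\subseteq (U^{\perp'})^{\perp'}$. Since $\perp'$ is an involution on $\fq$-subspaces, $(U^{\perp'})^{\perp'}=U$, and $H^{\perp}=H^{\perp'}$ is a $1$-dimensional $\fqm$-subspace, i.e.\ $\langle v\rangle_{\fqm}$ for some $v\ne 0$. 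Thus $\langle v\rangle_{\fqm}\subseteq U$, meaning $w_{L_U}(\langle v\rangle_{\fqm})=m$. But $L_U$ is an $i$-club with $i<m$, so no point has weight $m$ — contradiction. This short argument is what I would actually write; the duality-involution identity $(U^{\perp'})^{\perp'}=U$ and the fact that a one-dimensional $\fqm$-space sitting inside $U$ forces weight $m$ are the two facts doing all the work.
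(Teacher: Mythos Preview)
Your final argument is correct and is exactly the paper's proof: assume $U^{\perp'}$ lies in some $\fqm$-hyperplane $H$, dualize via Proposition~\ref{prop:dualityproperties}(iii)--(iv) to get $H^{\perp}\subseteq (U^{\perp'})^{\perp'}=U$, and note that $H^{\perp}$ is a one-dimensional $\fqm$-subspace, forcing a point of weight $m$ in $L_U$, contradicting $i<m$. The earlier exploratory route through Proposition~\ref{prop:linset=entirespace} is unnecessary and can be dropped entirely.
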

\begin{proof}
    Suppose that there exists a hyperplane $H$ of $\mathbb{V}$ containing $U^{\perp'}$. By (iii) of Proposition \ref{prop:dualityproperties}, we have that $H^\perp \subseteq U$.
    Since $\dim_{\F_q}(H^\perp)=m$ (by (ii) of Proposition \ref{prop:dualityproperties}) and $i<m$, we get that in $L_U$ there is a point of weight $m$, a contradiction.
\end{proof}

\begin{remark}
Note that the above proposition is not true when considering $m$-clubs. Indeed, in this case, $U^{\perp'}$ is contained in the dual of the point having weight $m$ in $L_U$. Therefore, in this case, we cannot associate a rank-metric code to its dual.
\end{remark}

Hence, we now consider a rank-metric code associated with the dual of an $i$-club. In the following result, we determine the parameters and the weight distribution of this code.

\begin{proposition}\label{prop:codeparam}
    Let $L_U$ be an $i$-club of rank $n$ in PG$(k-1,q^m)$ with $i<m$ and let $\C$ be a rank-metric code associated with $U^{\perp'}$. We have that $\C$ is an $[km-n,k,m-i]_{q^m/q}$. Moreover, if $i>1$ then its weight distribution is
    \begin{itemize}
        \item $A_0=1$;
        \item $A_{m-i}=q^m-1$;
        \item $A_{m-1}=(q^m-1)(q^n+\ldots+q^i)$;
        \item $A_m=q^{mk}-1-A_{m-1}-A_{m-i}$;
        \item $A_j=0$ for any $j \notin\{0,m-i,m-1,m\}$;
    \end{itemize}
    and if $i=1$ then its weight distribution is
    \begin{itemize}
        \item $A_0=1$;
        \item $A_{m-1}=(q^m-1)(q^n+\ldots+q+1)$;
        \item $A_m=q^{mk}-1-A_{m-1}$;
        \item $A_j=0$ for any $j \notin\{0,m-1,m\}$.
    \end{itemize}
\end{proposition}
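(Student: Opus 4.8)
The strategy is to translate everything about the club $L_U$ into data about the code $\C$ associated with the system $U^{\perp'}$, using the weight formula of Theorem \ref{th:connection} together with the duality dictionary of Proposition \ref{prop:dualityproperties}(v). First I would fix the basic parameters: by Proposition \ref{prop:dualconsidercode} the subspace $U^{\perp'}$ spans $\mathbb{V}$, so it really is a system; its $\fq$-dimension is $mk-n$ by Proposition \ref{prop:dualityproperties}(ii), hence the length of $\C$ is $km-n$ and its dimension is $k$. For the minimum distance I would apply \eqref{eq:distancedesign}: $d = (km-n) - \max\{\dim_{\fq}(U^{\perp'}\cap H)\colon H \text{ an }\fqm\text{-hyperplane}\}$. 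Writing $H = W^{\perp}$ for a point $\langle v\rangle_{\fqm}$, so $W$ is $1$-dimensional over $\fqm$, the formula \eqref{eq:dualweight} with $s=1$, $t=n$ gives $\dim_{\fq}(U^{\perp'}\cap W^{\perp'}) = \dim_{\fq}(U\cap W) + (mk-n-m)$. The maximum of the right-hand side over points $\langle v\rangle_{\fqm}$ is attained at the weight-$i$ point, giving $i + (mk-n-m)$, and substituting back yields $d = (km-n) - (i + mk - n - m) = m-i$. (Here I use $i<m$ so that $m-i\geq 1$, consistent with $\C$ being genuinely a code.)

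Next, the weight distribution. For a nonzero codeword $c = xG$ with $x\in\fqm^k$, the formula \eqref{eq:relweight} reads $w(xG) = (km-n) - \dim_{\fq}(U^{\perp'}\cap x^{\perp})$, and $x^{\perp}$ is the $\fqm$-hyperplane dual to the point $P_x = \langle x \rangle_{\fqm}$ under $\perp$. Applying \eqref{eq:dualweight} again with $s = k-1$ (the dimension of $x^{\perp}$), $t = n$: $\dim_{\fq}(U^{\perp'}\cap (x^{\perp})^{\perp'}) - \dim_{\fq}(U\cap x^{\perp}) = mk - n - m(k-1) = m - n$. Since $(x^{\perp})^{\perp'} = (x^{\perp})^{\perp} = \langle x\rangle_{\fqm}$ by (iv), we get $\dim_{\fq}(U^{\perp'}\cap x^{\perp}) = \dim_{\fq}(U^{\perp'}\cap \langle x\rangle_{\fqm})$... wait — I need to be careful: the left side of \eqref{eq:dualweight} has $U^{\perp'}\cap W^{\perp'}$ where here I should take $W = x^{\perp}$, so $W^{\perp'} = \langle x\rangle_{\fqm}$, and this identifies $\dim_{\fq}(U^{\perp'}\cap x^{\perp})$ with $\dim_{\fq}(U\cap \langle x\rangle_{\fqm}) + (m-n) + \text{correction}$; carrying the bookkeeping through, $w(xG)$ depends only on $w_{L_U}(P_x) =: e \in \{1,i\}$ (or $\{1\}$ if scattered), and one finds $w(xG) = m - e$ when $P_x\in L_U$ and $w(xG) = m$ when $P_x\notin L_U$. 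Thus the only possible nonzero weights are $m-i$, $m-1$, and $m$ (and $m-1$, $m$ in the scattered-point case $i=1$, noting $m-i = m-1$ then collapses the first two). I would spell this computation out carefully as a lemma-style paragraph since it is the technical heart.

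Finally I count. Every nonzero $c=xG$ with $x$ ranging over $\fqm^k\setminus\{0\}$ gives $q^{mk}-1$ codewords total, but each projective point $P_x$ is hit by exactly $q^m-1$ scalar multiples of $x$. The weight-$(m-i)$ codewords (for $i>1$) correspond to the unique weight-$i$ point of $L_U$: that is $q^m-1$ codewords, giving $A_{m-i}=q^m-1$. The weight-$(m-1)$ codewords correspond to the weight-one points of $L_U$, of which there are $|L_U|-1 = (q^{n-1}+\cdots+q^i+1)-1 = q^{n-1}+\cdots+q^i$ by Proposition \ref{prop:size}, so $A_{m-1} = (q^m-1)(q^n+\cdots+q^i)$ — here one must check the index shift: $(q^m-1)$ times $(q^{n-1}+\cdots+q^i)$ equals $(q^{m}-1)(q^{n-1}+\dots+q^i)$, and $q^{n}+\dots+q^i$ appears instead because... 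I would double-check whether the $q^m-1$ factor or a reindexing produces the claimed $q^n+\cdots+q^i$; this is the one spot where I expect to need the most care with the geometric-series arithmetic, and it is the \textbf{main obstacle} to getting the statement exactly right. The remaining codewords all have weight $m$, giving $A_m = q^{mk}-1 - A_{m-1} - A_{m-i}$, and $A_j=0$ otherwise. The $i=1$ case is identical except the weight-$i$ point is a weight-one point, so it folds into $A_{m-1}$, and $|L_U| = q^{n-1}+\cdots+q+1$ gives $A_{m-1} = (q^m-1)(q^n+\cdots+q+1)$ after the same reindexing. Note the MacWilliams identities are not actually needed for \emph{this} proposition — they enter in the subsequent bound — so the proof is purely the weight-formula-plus-counting argument just outlined.
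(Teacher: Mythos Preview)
Your approach is correct and essentially identical to the paper's: fix the parameters via Propositions \ref{prop:dualityproperties}(ii) and \ref{prop:dualconsidercode}, then combine \eqref{eq:relweight} with the duality formula \eqref{eq:dualweight} to obtain $w(xG)=m-w_{L_U}(\langle x\rangle_{\fqm})$ and hence $A_{m-j}=(q^m-1)N_j(U)$. Regarding your flagged ``main obstacle'': your computation $A_{m-1}=(q^m-1)(q^{n-1}+\cdots+q^i)$ is the correct one (since $N_1(U)=q^{n-1}+\cdots+q^i$ by Proposition \ref{prop:size}), and the $q^n$ in the displayed statement appears to be a typo for $q^{n-1}$ (likewise in the $i=1$ case); this is harmless downstream, as the MacWilliams computation in Theorem \ref{th:mainBound} only uses $A_0$ and $A_{m-i}$.
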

\begin{proof}
    By (ii) of Proposition \ref{prop:dualityproperties}, we have that $\dim_{\fq}(U^{\perp'})=km-n$ and so the length of $\C$ is $km-n$ and clearly $\dim_{\F_{q^m}}(\C)=k$ by Proposition \ref{prop:dualconsidercode}. 
    From \eqref{eq:relweight}, we need to determine the possible weights of the hyperplanes with respect to $U^{\perp'}$ in order to determine the weights of $\C$.
    Observe that from (v) of Proposition \ref{prop:dualityproperties} we have that
    \[ \dim_{\F_q}(U^{\perp'}\cap H)=(k-1)m-n+\dim_{\fq}(U\cap H^\perp), \]
    for any $\F_{q^m}$-subspace of $\F_{q^m}^k$ of dimension $k-1$.
    Note that the number of hyperplanes having weight $(k-1)m-n+\dim_{\fq}(U\cap H^\perp)$, with $j \in \{0,1,i\}$, is equal to $N_j(U)$.
    Now, let $G$ be a generator matrix of $\C$ whose $\fq$-span of the columns corresponds to $U^\perp$.
    Using Theorem \ref{th:connection}, we have that for any $x \in \F_{q^m}^k$
    \[w(xG)=mk-n-\dim_{\fq}(U^\perp \cap x^\perp).\]
    By the above considerations we have that the nonzero weight of $\C$ are $m-i,m-1$ and $m$.
    The assertion is proved by noting that proportional codewords are associated with the same hyperplane (see Theorem \ref{th:connection}) and therefore $A_{m-j}=(q^m-1)N_j(U)$ for every $j$.
\end{proof}

\begin{remark}
    Note that the code we are considering is also called as the geometric dual of the code associated with the subspace $U$; see \cite{BoreZull0}.
    Also, using the correspondence between MRD codes and scattered subspaces proved in \cite{zini2021scattered}, we have that the code $\C$ of the above proposition is MRD if and only if $U$ is scattered and $n=mk/2$ if $k>2$ (see \cite{zini2021scattered,marino2023evasive}).
\end{remark}

First, let us prove a simple bound on the existence of an $i$-club.

\begin{proposition}\label{prop:genbound}
    Let $U$ be an $i$-club of rank $n$ in PG$(k-1,q^m)$ with $i<m$. We have $n\leq (k-1)m$.
\end{proposition}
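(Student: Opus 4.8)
The statement to prove is Proposition~\ref{prop:genbound}: an $i$-club $L_U$ of rank $n$ in $\PG(k-1,q^m)$ with $i<m$ satisfies $n\le (k-1)m$. The natural approach is to exploit the rank-metric code $\C$ associated with $U^{\perp'}$, whose parameters were just computed in Proposition~\ref{prop:codeparam}: it is an $[km-n,\,k,\,m-i]_{q^m/q}$ code. I would feed these parameters into the Singleton-like bound of Theorem~\ref{th:singletonrank}. Writing $N=km-n$ for the length, the bound reads $mk\le \max\{m,N\}(\min\{m,N\}-(m-i)+1)$. The desired inequality $n\le (k-1)m$ is equivalent to $N=km-n\ge m$, so the plan is to argue by contradiction: assume $N<m$, i.e. $N\le m-1$, and derive a violation of the Singleton bound (or of the trivial positivity of the minimum distance).

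\textbf{Key steps.} First I would record the equivalence $n\le(k-1)m \iff N\ge m$ and set up the contradiction hypothesis $N\le m-1$. Under this hypothesis $\max\{m,N\}=m$ and $\min\{m,N\}=N$, so Theorem~\ref{th:singletonrank} gives $mk\le m(N-(m-i)+1)$, that is, $k\le N-m+i+1$. Combined with $N\le m-1$ this yields $k\le i$, and since $i<m$ we would get $k\le i\le m-1$; more importantly, rearranging $k\le N-m+i+1$ gives $N\ge k+m-i-1$. One then needs to play this off against $N\le m-1$ to force $k\le i$, and then to derive an outright contradiction — the cleanest route is probably to observe that $N\le m-1 < m = m-0 $ forces the minimum distance $m-i$ to exceed the length in a degenerate way, or more simply that a non-degenerate code of length $N$ has minimum distance at most... hmm, that is not automatic in the rank metric. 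So instead I would lean entirely on Singleton: from $k\le N-m+i+1$ and the non-degeneracy constraint $N\ge k$ (a non-degenerate code has length at least its dimension, since the columns of a $k\times N$ full-row-rank-over-$\fqm$ generator span $\fqm^k$), one gets $m-i\le 1$, i.e. $i\ge m-1$, and then the remaining small cases $i=m-1$ (and, if needed, checking $d=m-i\ge 1$ is actually compatible) are handled by hand, or ruled out because they still force $N\ge m$.

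\textbf{Alternative, cleaner step.} Actually the slick version avoids casework: I would simply note that $\C$ is a non-degenerate code of dimension $k$, hence has length $N\ge k\ge 1$, and that every codeword has rank weight at most $\min\{m,N\}$. If $N\le m-1$ then all weights are $\le N\le m-1$, but Proposition~\ref{prop:codeparam} shows $\C$ contains codewords of weight $m$ (since $A_m=q^{mk}-1-A_{m-1}-A_{m-i}>0$, as the club is not the whole space — or one checks $A_m>0$ directly from the count), which is impossible. Hence $N\ge m$, i.e. $n\le(k-1)m$. To make this airtight I would verify $A_m>0$: the point count of the club is $q^{n-1}+\cdots+q^i+1$ which is strictly less than $(q^{mk}-1)/(q-1)$ whenever $n\le (k-1)m$ fails only at the boundary, so I should instead argue $A_m>0$ whenever the club does not fill all of $\PG(k-1,q^m)$ — and if it does fill the whole space, Proposition~\ref{prop:linset=entirespace} gives $n\ge m(k-1)+1$, handled separately, but that case contradicts $i<m$ via Proposition~\ref{prop:dualconsidercode}'s circle of ideas anyway.

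\textbf{Main obstacle.} The only real subtlety is the clean justification that $\C$ genuinely has a codeword of weight $m$ (equivalently, that $U$ has a hyperplane of weight $n-m+(k-1)m$, i.e. the full possible weight does not occur "too often"), or, in the Singleton-based route, correctly handling the edge cases $i\in\{m-1\}$ and tiny $k$ without circularity. I expect the Singleton argument of the first paragraph to be the most robust: plug in, use $N\ge k$ from non-degeneracy, and read off $m-i\le 1$ as the only way to avoid a contradiction — then dispatch $i=m-1$ with a direct check that even there $N\ge m$ is forced. This is short and self-contained given Theorems~\ref{th:singletonrank} and~\ref{th:connection} together with Proposition~\ref{prop:codeparam}.
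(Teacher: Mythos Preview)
Your proposal does not close, and the paper's argument is both simpler and genuinely different.

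The Singleton route fails to yield a contradiction. Under the hypothesis $N=km-n\le m-1$, the bound $mk\le m(N-(m-i)+1)$ gives only $k\le i$, which is \emph{not} absurd (an $i$-club with $i\ge k$ is perfectly possible). Your claimed deduction ``use $N\ge k$ to read off $m-i\le 1$'' is simply wrong: from $k\le N-m+i+1$ and $N\le m-1$ you get $k\le i$, and $N\ge k$ adds nothing further. So the Singleton argument dead-ends.

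Your ``cleaner'' alternative is circular. The observation that rank weights are bounded by $\min\{m,N\}$ is fine, but $A_m>0$ holds \emph{precisely} when some point of $\PG(k-1,q^m)$ lies outside $L_U$, i.e.\ when $L_U$ is not the whole space, which by Proposition~\ref{prop:linset=entirespace} is equivalent to $n\le (k-1)m$ --- exactly what you are trying to prove. Your appeal to ``Proposition~\ref{prop:dualconsidercode}'s circle of ideas'' to handle the case $L_U=\PG(k-1,q^m)$ does not work: that proposition only tells you $\langle U^{\perp'}\rangle_{\fqm}=\mathbb{V}$, which you already used and which does not rule out $L_U$ being everything. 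What your weight argument \emph{does} give (using $A_{m-1}>0$, which is genuine since a club has points of weight one) is $N\ge m-1$, i.e.\ $n\le (k-1)m+1$; but the boundary case $n=(k-1)m+1$ still needs to be eliminated.

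The paper's proof bypasses the code $\C$ entirely. It argues by contradiction: if $n>(k-1)m$ then Proposition~\ref{prop:linset=entirespace} forces $L_U=\PG(k-1,q^m)$, so the number of weight-one points equals $(q^{mk}-1)/(q^m-1)-1$; but by Proposition~\ref{prop:size} this same number equals $q^{n-1}+\cdots+q^i$. A congruence mod $q^m$ shows these two expressions cannot agree when $i<m$. This elementary counting is the missing ingredient, and it is essentially independent of the rank-metric machinery you tried to invoke.
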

\begin{proof}
    By contradiction, assume that $n>(k-1)m$. By Proposition \ref{prop:linset=entirespace}, it immediately follows that $L_U=\mathrm{PG}(k-1,q^m)$.
    Since $L_U$ is an $i$-club, it has only one point of weight greater than one and so the number of points of weight one is $N_1(U)=(q^{mk}-1)/(q^m-1)-1$.
    Whereas, by Proposition \ref{prop:size}, we have that
    \[ N_1(U)=q^{n-1}+\ldots+q^i. \]
    Since $(q^{mk}-1)/(q^m-1)-1$ is congruent to $0$ modulo $q^m$ and $q^{n-1}+\ldots+q^i$ is not, indeed if $n\leq m$ this is clearly true, whereas if $n>m$ then $q^{n-1}+\ldots+q^i \equiv q^{m-1}+\ldots+q^i$ which cannot be zero modulo $q^m$. Therefore, these two values are distinct and we have a contradiction.
\end{proof}

We can apply the Singleton bound on the code of the above proposition in order to get a first bound.

\begin{proposition}
    Let $L_U$ be an $i$-club of rank $n\leq (k-1)m$ in PG$(k-1,q^m)$ with $i<m$. We have
    \[ n\leq \frac{kmi}{i+1}. \]
\end{proposition}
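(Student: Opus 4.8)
The statement we want is a Singleton-type bound $n \le \frac{kmi}{i+1}$ for an $i$-club of rank $n \le (k-1)m$ with $i < m$. The natural strategy, given the machinery just set up, is to apply the Singleton bound of Theorem~\ref{th:singletonrank} to the rank-metric code $\C$ associated with $U^{\perp'}$, whose parameters were computed in Proposition~\ref{prop:codeparam}: it is an $[N, k, d]_{q^m/q}$ code with $N = km - n$ and $d = m - i$. Plugging these into \eqref{eq:boundgen} gives $mk \le \max\{m, N\}\cdot(\min\{m, N\} - d + 1)$, so the whole argument is really just bookkeeping on the cases $N \ge m$ versus $N < m$.

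\textbf{Key steps.} First I would record that, since $n \le (k-1)m$, we have $N = km - n \ge m$, so $\max\{m, N\} = N$ and $\min\{m, N\} = m$ (with equality only in the boundary case $n = (k-1)m$, $k=1$ being excluded implicitly since an $i$-club with $i<m$ needs $k \ge 2$). Hence Theorem~\ref{th:singletonrank} reads
\[ mk \le (km - n)(m - d + 1) = (km - n)(m - (m - i) + 1) = (km - n)(i + 1). \]
Rearranging, $mk \le (km - n)(i+1)$ gives $n(i+1) \le km(i+1) - mk = mk\bigl((i+1) - 1\bigr) = mki$, hence $n \le \frac{kmi}{i+1}$, which is exactly the claimed inequality. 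I would also briefly note the edge case $N = m$ (i.e.\ $n = (k-1)m$): there $\min\{m,N\}=\max\{m,N\}=m$ and the bound reads $mk \le m(m - d + 1) = m(i+1)$, i.e.\ $k \le i+1$; one checks this is consistent with $n = (k-1)m \le \frac{kmi}{i+1}$, which simplifies to exactly $k \le i+1$, so no contradiction arises and the general computation covers this case uniformly.

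\textbf{Main obstacle.} Honestly, there is no serious obstacle here — the proof is a one-line substitution into the Singleton bound followed by elementary algebra. The only thing requiring a moment's care is confirming that $N = km - n \ge m$ so that the $\max/\min$ in \eqref{eq:boundgen} resolve the way I claimed; this is precisely why the hypothesis $n \le (k-1)m$ (established in Proposition~\ref{prop:genbound}) is included in the statement, and it is what makes the clean form of the bound come out. A secondary point worth a sentence is that we are implicitly using $k \ge 2$: for $k = 1$ there are no nontrivial clubs, and the previous proposition's hypothesis $i < m$ together with the existence of an $i$-club forces $k \ge 2$ anyway. Beyond that, I would just present the substitution, the rearrangement, and conclude.
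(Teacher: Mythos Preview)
Your proposal is correct and matches the paper's proof essentially line for line: the paper also applies Theorem~\ref{th:singletonrank} to the code $\C$ of Proposition~\ref{prop:codeparam}, uses $n\le (k-1)m$ to resolve $\max\{m,km-n\}=km-n$ and $\min\{m,km-n\}=m$, obtains $mk\le(km-n)(i+1)$, and rearranges. Your edge-case discussion is extra care the paper omits.
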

\begin{proof}
    Consider a rank-metric code $\C$ associated with $U^\perp$ (along the same lines as of the proof of Proposition \ref{prop:codeparam}). Note that 
    \[ \max\{ m,km-n \}=km-n\,\,\text{and}\,\,\min\{ m,km-n \}=m, \]
    as $n\leq (k-1)m$.
    Therefore, thanks to Proposition \ref{prop:codeparam}, we know that the minimum distance of $\C$ is $m-i$ and the Singleton bound in Theorem \ref{th:singletonrank} reads as follows
    \[ mk\leq (km-n)(i+1), \]
    from which we derive the assertion.
\end{proof}

Our aim is to improve the above bound with the aid of the MacWilliams identities, which relate the weight distribution of a code $\mathcal{C}$ to that of its dual. Recall that the \emph{dual code} of an \( [n,k]_{q^m/q} \) code \( \mathcal{C} \) is defined as
\[
\mathcal{C}^\perp = \left\{ (d_1,\ldots,d_n) \in \mathbb{F}_{q^m}^n \, \colon \, \sum_{i=1}^n c_i d_i = 0 \text{ for every } (c_1,\ldots,c_n) \in \mathcal{C} \right\}.
\]
The identities take into account the entire weight distribution of the code, not just the minimum weight. In what follows, for two integers $s$ and $t$, the $q$-binomial coefficient of $s$ and $t$ is denoted and defined by
\[
\qbin{s}{t}{q} := 
\begin{cases}
    0 & \text{if } s < 0,\, t < 0, \text{ or } t > s, \\
    1 & \text{if } t = 0 \text{ and } s \geq 0, \\
    \prod\limits_{i=0}^{t-1} \frac{q^{s-i} - 1}{q^{i+1} - 1} & \text{otherwise}.
\end{cases}
\]

We now rephrase the MacWilliams identities in our notation.

\begin{theorem}(see \cite[Theorem 31]{ravagnani2016rank})\label{thm:MWident}
    Let $\C$ be an $[N,k]_{q^m/q}$ code. Let $(A_i)_{i \geq 0}$ and $(B_j)_{j \geq 0}$ be the rank distributions of $\C$ and $\C^\perp$, respectively. For any integer $0\leq \nu \leq m$ we have
    \[ \sum_{i=0}^{m-\nu} A_i \qbin{m-i}{\nu}{q}=\frac{|\C|}{q^{N\nu}}\sum_{j=0}^\nu B_j\qbin{m-j}{\nu-j}{q}. \]
\end{theorem}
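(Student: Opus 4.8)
The plan is to establish the identity directly by double counting: re-express the left-hand side as a sum, over the $\fq$-subspaces of $\fqm$, of the sizes of certain ``sections'' of $\mathcal{C}$, and then transport this to $\mathcal{C}^\perp$ via $\fq$-linear duality inside $\fqm^N$. (A Poisson-summation proof on the group $(\fqm^N,+)$ is also available, but it forces one to evaluate a Gauss-sum–type transform, so I would avoid it; the combinatorial route is essentially the argument of \cite{ravagnani2016rank}.) The first ingredient is the observation that, for every $c=(c_1,\dots,c_N)\in\fqm^N$ and every $0\le\nu\le m$,
\[
\qbin{m-w(c)}{\nu}{q}=\#\bigl\{Z\le\fqm:\ \dim_{\fq}Z=m-\nu,\ c_1,\dots,c_N\in Z\bigr\}.
\]
Indeed, these $Z$ are exactly the $(m-\nu)$-dimensional $\fq$-subspaces of $\fqm$ containing $\langle c_1,\dots,c_N\rangle_{\fq}$ (a subspace of dimension $w(c)$), so there are $\qbin{m-w(c)}{(m-\nu)-w(c)}{q}=\qbin{m-w(c)}{\nu}{q}$ of them by the symmetry $\qbin{a}{b}{q}=\qbin{a}{a-b}{q}$, both sides being $0$ when $w(c)>m-\nu$. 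Writing $Z^N\le\fqm^N$ for the $\fq$-subspace of vectors all of whose entries lie in $Z$, I would sum this identity over $c\in\mathcal{C}$ and interchange summations to get
\[
\sum_{i=0}^{m-\nu}A_i\qbin{m-i}{\nu}{q}=\sum_{c\in\mathcal{C}}\qbin{m-w(c)}{\nu}{q}=\sum_{\substack{Z\le\fqm\\ \dim_{\fq}Z=m-\nu}}\bigl|\mathcal{C}\cap Z^N\bigr|.
\]

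Next I would pass to duals. Equip $\fqm^N$ with the nondegenerate $\fq$-bilinear form $\langle x,y\rangle=\mathrm{Tr}_{q^m/q}\!\bigl(\sum_{i=1}^N x_iy_i\bigr)$ and let $V\mapsto V^{\perp'}$ be the induced orthogonal complement on $\fq$-subspaces of $\fqm^N$. Two facts are needed. First, since $\mathcal{C}$ is $\fqm$-linear, $\mathcal{C}^{\perp'}=\mathcal{C}^\perp$: if $d\notin\mathcal{C}^\perp$, pick $c\in\mathcal{C}$ with $\beta:=\sum_i c_id_i\ne0$; then $\sum_i(\lambda c_i)d_i=\lambda\beta$ runs over all of $\fqm$ as $\lambda$ does, so $\mathrm{Tr}_{q^m/q}(\lambda\beta)\ne0$ for some $\lambda$, whence $d\notin\mathcal{C}^{\perp'}$. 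Second, $(Z^N)^{\perp'}=(Z^\perp)^N$ (test against vectors supported on a single coordinate), where $Z^\perp\le\fqm$ is the trace-dual of $Z$, of $\fq$-dimension $\nu$. Using $(A\cap B)^{\perp'}=A^{\perp'}+B^{\perp'}$ together with $\dim_{\fq}\mathcal{C}^\perp=m(N-k)$, $\dim_{\fq}(Z^\perp)^N=\nu N$ and $|\mathcal{C}|=q^{mk}$, a short dimension count gives, for every such $Z$,
\[
\dim_{\fq}\bigl(\mathcal{C}\cap Z^N\bigr)=mk-\nu N+\dim_{\fq}\bigl(\mathcal{C}^\perp\cap(Z^\perp)^N\bigr),
\]
and hence $\bigl|\mathcal{C}\cap Z^N\bigr|=\dfrac{|\mathcal{C}|}{q^{\nu N}}\,\bigl|\mathcal{C}^\perp\cap(Z^\perp)^N\bigr|$.

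Finally I would reassemble. Substituting this equality into the previous display and re-indexing the sum by $Y:=Z^\perp$ — which ranges over all $\nu$-dimensional $\fq$-subspaces of $\fqm$ as $Z$ ranges over the $(m-\nu)$-dimensional ones — yields
\[
\sum_{i=0}^{m-\nu}A_i\qbin{m-i}{\nu}{q}=\frac{|\mathcal{C}|}{q^{\nu N}}\sum_{\substack{Y\le\fqm\\ \dim_{\fq}Y=\nu}}\bigl|\mathcal{C}^\perp\cap Y^N\bigr|.
\]
Applying the double counting of the first step to $\mathcal{C}^\perp$, now in the ``containing'' direction — for fixed $d\in\mathcal{C}^\perp$ the $\nu$-dimensional $Y$ with $d_1,\dots,d_N\in Y$ number $\qbin{m-w(d)}{\nu-w(d)}{q}$ — the remaining sum equals $\sum_{d\in\mathcal{C}^\perp}\qbin{m-w(d)}{\nu-w(d)}{q}=\sum_{j=0}^{\nu}B_j\qbin{m-j}{\nu-j}{q}$, which is exactly the asserted identity.

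I do not expect any single step to be genuinely hard: the proof is a chain of bookkeeping identities. The points I would be most careful about are the combinatorial reformulation in the first display — in particular the $q$-binomial symmetry that turns $\qbin{m-w(c)}{(m-\nu)-w(c)}{q}$ into $\qbin{m-w(c)}{\nu}{q}$ — and keeping straight that the left-hand side counts subspaces \emph{containing} a fixed one of dimension $w(c)$, whereas on the dual side one counts subspaces of the complementary dimension. The identities $\mathcal{C}^{\perp'}=\mathcal{C}^\perp$ and $(Z^N)^{\perp'}=(Z^\perp)^N$ are routine but are precisely what makes the transfer work, so I would verify them explicitly; once they are in place the dimension arithmetic is forced.
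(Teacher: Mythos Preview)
The paper does not prove this theorem: it is stated with a citation to \cite{ravagnani2016rank} and used as a black box, so there is no ``paper's own proof'' to compare against. Your argument is correct and is essentially the standard double-counting proof (the one in the cited reference): interpret $\qbin{m-w(c)}{\nu}{q}$ as the number of $(m-\nu)$-dimensional $\fq$-subspaces $Z\le\fqm$ with $c\in Z^N$, swap the order of summation, pass to the dual via the trace form, and unwind. The only places worth a second look are exactly the ones you flagged --- the identities $\C^{\perp'}=\C^\perp$ and $(Z^N)^{\perp'}=(Z^\perp)^N$ --- and both are fine as you argue them.
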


We are know ready to use MacWilliams identities to show the desired upper bound.

\begin{theorem}\label{th:mainBound}
    Let $L_U$ be an $i$-club of rank $n$ in PG$(k-1,q^m)$ with $2 \leq i<m$. We have
    \[ n\leq  
    \begin{cases}
        \frac{mk}2 & \text{if }\,\, i\leq m/2,\\
        \frac{m(k-1)}2+i & \text{if }\,\, i\geq m/2.
    \end{cases}
    \]
\end{theorem}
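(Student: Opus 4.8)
The plan is to extract inequalities from the MacWilliams identities in Theorem~\ref{thm:MWident} applied to the code $\C$ associated with $U^{\perp'}$, whose full weight distribution is known from Proposition~\ref{prop:codeparam}. Write $N = km - n$ for the length of $\C$. Since $n \le (k-1)m$ (Proposition~\ref{prop:genbound}), we have $N \ge m$. The key structural fact is that $\C$ has only three nonzero weights, namely $m-i$, $m-1$, $m$, and all of them are $\ge m-i$; in particular, for $\nu$ in a suitable range, the left-hand side $\sum_{i'=0}^{m-\nu} A_{i'}\qbin{m-i'}{\nu}{q}$ collapses to very few terms. Concretely I would evaluate the identity at $\nu = i+1$: then the only weights $i'$ with $i' \le m-\nu = m-i-1$ that contribute are $i' = 0$ (giving $\qbin{m}{i+1}{q}$) and possibly $i' = m-i$ if $m-i \le m-i-1$, which never happens, so in fact only $A_0 = 1$ contributes. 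This gives
\[
\qbin{m}{i+1}{q} = \frac{|\C|}{q^{N(i+1)}}\sum_{j=0}^{i+1} B_j \qbin{m-j}{i+1-j}{q}.
\]
Since all $B_j \ge 0$ and $B_0 = 1$, the right-hand side is at least $\frac{|\C|}{q^{N(i+1)}}\qbin{m}{i+1}{q} = \frac{q^{mk}}{q^{N(i+1)}}\qbin{m}{i+1}{q}$, which forces $q^{N(i+1)} \ge q^{mk}$, i.e. $N(i+1) \ge mk$, recovering the Singleton-type bound $n \le \frac{kmi}{i+1}$. To do better one must use more of the identities and the fact that the $B_j$ are not just nonnegative but integers, and that $\C^\perp$ is itself non-degenerate with controlled minimum distance.

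For the sharper bound I would instead use the dual weight distribution: by the connection in Theorem~\ref{th:connection}, $\C^\perp$ is (up to the degenerate part) associated with the system $U$ itself, so the minimum distance of $\C^\perp$ relates to the maximum hyperplane weight of $U$, which is at most $n$ but more importantly $B_j = 0$ for small $j$ except $B_0 = 1$ — one should pin down exactly how many low-weight codewords $\C^\perp$ can have. The cleaner route is probably to apply Theorem~\ref{thm:MWident} at two carefully chosen values of $\nu$ and combine. For $i \le m/2$ one expects to land on $n \le mk/2$, the scattered bound, which suggests the argument should mimic (a dual form of) the Blokhuis–Lavrauw averaging argument: here the $i$-club has $N_1(U)$ points of weight one, $N_i(U) = 1$ point of weight $i$, and plugging $N_1(U) = q^{n-1}+\cdots+q^i$ together with \eqref{eq:pesivett} into a MacWilliams identity at $\nu = 2$ should yield a quadratic constraint in $q^n$. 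For $i \ge m/2$, the bound $n \le \frac{m(k-1)}{2} + i$ rearranges to $km - n \ge \frac{m(k-1)}{2} + (m-i) = \frac{m(k+1)}{2} - i$, i.e. the length $N$ of $\C$ is bounded below in terms of $m-i$ and $m$; this looks like a Singleton-type bound for $\C$ but in the regime $\min\{m, N\} = m$ where one instead gets a bound involving the \emph{second} branch, so I would apply the identity at $\nu = m - i + 1$ (just above the minimum distance) where again the left side collapses to $A_0\qbin{m}{m-i+1}{q} + A_{m-i}\qbin{i}{m-i+1}{q}$, and for $i \ge m/2$ the second $q$-binomial vanishes since $i < m-i+1$ fails... actually for $i \ge m/2$ we have $i \ge m-i$ so $\qbin{i}{m-i+1}{q}$ may be nonzero; this case distinction is exactly where $i \lessgtr m/2$ enters.

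Thus the overall strategy: (1) set up $\C$, $N = km-n$, and record $(A_j)$ from Proposition~\ref{prop:codeparam}; (2) apply Theorem~\ref{thm:MWident} at the threshold value $\nu_0$ equal to (one more than) the minimum distance $m-i$, so that the left-hand side has at most two terms, $A_0\qbin{m}{\nu_0}{q}$ and $A_{m-i}\qbin{i}{\nu_0}{q}$, the latter present precisely when $\nu_0 \le i$, i.e. when $m - i + 1 \le i$, i.e. $i \ge (m+1)/2$ — this is the source of the dichotomy; (3) bound the right-hand side from below using $B_j \ge 0$ and $B_0 = 1$ (and, if needed, a second identity to control $B_{m-1}$ in terms of $A_{m-1}$), obtaining a power-of-$q$ inequality; (4) take $q$-adic valuations / compare dominant terms and simplify to isolate $n$. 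I expect the main obstacle to be step (3)–(4): naively discarding all $B_j$ with $j \ge 1$ only reproduces the weaker Singleton bound, so getting the factor-two improvement requires either identifying a second nonzero $B_j$ that must appear (forcing a strictly larger lower bound on the right-hand side), or running the identities at two values of $\nu$ and eliminating the unknown dual weights between them — analogous to how the scattered bound $n \le mk/2$ is genuinely stronger than what a single Singleton application gives. Making that elimination precise, and verifying the resulting inequality simplifies exactly to the two stated branches (in particular checking the boundary case $i = m/2$ is consistent from both sides), is the delicate part; the rest is bookkeeping with $q$-binomials.
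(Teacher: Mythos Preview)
Your setup is right --- work with the code $\C$ associated to $U^{\perp'}$, use its known weight distribution from Proposition~\ref{prop:codeparam}, and extract an inequality from a MacWilliams identity --- but you are overshooting on the choice of $\nu$ and on the amount of machinery needed. The paper does \emph{not} work at the threshold $\nu_0 = m-i+1$ nor juggle two identities; it takes the single value $\nu = 2$ and reads off an explicit closed formula for $B_2$. This works because, beyond $B_0=1$, the paper uses the extra input $B_1 = 0$ (since $\C$ is non-degenerate, $\C^\perp$ has minimum distance $\ge 2$, cf.\ \cite[Proposition~3.2]{alfarano2022linear}). With $\nu=2$ only $A_0$ and $A_{m-i}$ survive on the left (the weights $m-1,m$ exceed $m-2$), so one gets
\[
\qbin{m}{2}{q} + (q^m-1)\qbin{i}{2}{q} \;=\; q^{\,mk-2(km-n)}\Bigl(\qbin{m}{2}{q} + B_2\Bigr),
\]
from which $B_2$ is computed exactly, and the single inequality $B_2\ge 0$ rearranges to
\[
q^{km-2n}(q^i-1)(q^{i-1}-1) + (q^{km-2n}-1)(q^{m-1}-1) \ge 0.
\]
A short case analysis on whether $i\le m/2$ or $i\ge m/2$, assuming $n$ violates the claimed bound, then makes the left-hand side negative. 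No second identity, no elimination between equations, no control on higher $B_j$ is needed.

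So your instinct that ``naively discarding all $B_j$ with $j\ge 1$ only reproduces the weaker Singleton bound'' is exactly right, but the fix is not to go to larger $\nu$ or to run two identities; it is to observe that $B_1$ is not merely nonnegative but \emph{zero}, which upgrades the $\nu=2$ identity from an inequality to an equation for $B_2$. You actually flag $\nu=2$ in passing for the $i\le m/2$ case; that is the whole proof in both regimes, and your more elaborate plan at $\nu = m-i+1$ is unnecessary. (Incidentally, your remark that $\C^\perp$ is ``associated with the system $U$ itself'' is not correct --- the geometric dual and the code-theoretic dual are different operations --- but this plays no role once you follow the $\nu=2$ route.)
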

\begin{proof}
   Consider $\C$ be an $[km-n,k]_{q^m/q}$ code associated with $U^\perp$ (as in Proposition \ref{prop:codeparam}). 
    Clearly $B_0=1$.
    By \cite[Proposition 3.2]{alfarano2022linear}, the minimum distance of $\C^\perp$ is at least two as $\C$ is non-degenerate. Hence, we have that $B_1=0$. Let us now determine $B_2$ via Theorem \ref{thm:MWident} with $\nu=2$.
    In this case we have
    \[ \sum_{j=0}^{m-2}A_j \qbin{m-j}{2}{q}=\frac{q^{mk}}{q^{2(km-n)}}\left( \qbin{m}{2}{q}+B_2\right), \]
    from which, by using Proposition \ref{prop:codeparam}, we derive 
    \begin{equation}\label{eq:B2} B_2=q^{km-2n}\left( (q^m-1)\qbin{i}{2}{q}+\qbin{m}{2}{q} \right)-\qbin{m}{2}{q}. \end{equation}
    Clearly, if the code $\C$ exists then $B_2\geq 0$. Therefore, from \eqref{eq:B2}, we have
    \begin{equation}\label{eq:supercond} 
    q^{km-2n}(q^i-1)(q^{i-1}-1)+(q^{km-2n}-1)(q^{m-1}-1)\geq 0. 
    \end{equation}
    Hence, if we prove that \eqref{eq:supercond} is not satisfied, then the code $\C$ cannot exist. We divide the analysis according to whether $i\leq m/2$ or $i> m/2$.\\
    \textbf{Case 1:} $i\leq m/2$.\\
    Assume that $n\geq mk/2+1$. In this case, we have that 
    \[ q^{km-2n}(q^i-1)(q^{i-1}-1)+(q^{km-2n}-1)(q^{m-1}-1) \leq q^{-2}(q^{m/2}-1)(q^{m/2-1}-1)+(q^{-2}-1)(q^{m-1}-1). \]
    It is easy to see that $q^{-2}(q^{m/2}-1)(q^{m/2-1}-1)+(q^{-2}-1)(q^{m-1}-1)<0$, yielding a contradiction to \eqref{eq:supercond}. Hence, $n\leq mk/2$. \\  
    \textbf{Case 2:} $i\geq m/2$.\\
    Assume that $n\geq m(k-1)/2+i+1$. In this case, we have that
    \[  q^{km-2n}(q^i-1)(q^{i-1}-1)+(q^{km-2n}-1)(q^{m-1}-1)\leq  q^{m-2i-2}(q^i-1)(q^{i-1}-1)+(q^{m-2i-2}-1)(q^{m-1}-1), \]
    using that $i\leq m-1$ we also get
    \[  q^{km-2n}(q^i-1)(q^{i-1}-1)+(q^{km-2n}-1)(q^{m-1}-1)\leq  q^{-m}(q^{m-1}-1)(q^{m-2}-1)+(q^{-m}-1)(q^{m-1}-1), \]
    and, as before, we can check that 
    \[q^{-m}(q^{m-1}-1)(q^{m-2}-1)+(q^{-m}-1)(q^{m-1}-1)<0,\]
    so that we get again a contradiction to \eqref{eq:supercond}.
\end{proof}

When $k=2$ the best bound we can get is the one provided in Proposition \ref{prop:genbound} when $i<m$. We resume the case in which $k=2$ in the following proposition.

\begin{proposition}
    Let $L_U$ be an $i$-club of rank $n$ in PG$(1,q^m)$. We have
    \[ n\leq  
    \begin{cases}
        m & \text{if }\,\, i\leq m-1,\\
        m+1 & \text{if }\,\, i=m.
    \end{cases}
    \]
\end{proposition}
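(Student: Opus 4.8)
The plan is to split the bound into the two cases exactly as stated and to derive each from results already available in the excerpt. First I would recall that by Proposition~\ref{prop:genbound}, any $i$-club of rank $n$ in $\mathrm{PG}(1,q^m)$ with $i<m$ satisfies $n\le (k-1)m = m$, which disposes of the case $i\le m-1$ with no further work: the value $k=2$ is substituted directly. So the only thing left is the case $i=m$, where we must show $n\le m+1$.

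For the case $i=m$, the dual trick used throughout Section~\ref{sec:bound} is unavailable (see the Remark after Proposition~\ref{prop:dualconsidercode}), so I would argue directly with the counting relations \eqref{eq:card}, \eqref{eq:pesicard}, \eqref{eq:pesivett} and Proposition~\ref{prop:size}. An $m$-club of rank $n$ in $\mathrm{PG}(1,q^m)$ has exactly one point $P$ of weight $m$ and all remaining points of weight one, so by Proposition~\ref{prop:size} its size is $q^{n-1}+\cdots+q^m+1$ (interpreting the sum appropriately when $n=m$, giving size $1+1$... so more carefully: $N_1(U)=q^{n-1}+\cdots+q^m$ and $|L_U|=N_1(U)+1$). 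On the other hand, $|L_U|\le \frac{q^m-1}{q-1}+ \text{something}$ is the wrong inequality to use; instead the clean input is \eqref{eq:card}, which gives $|L_U|\le \frac{q^n-1}{q-1}$, together with the weight identity \eqref{eq:pesivett}, which here reads
\[
N_1(U) + N_n(U)\,\frac{q^m-1}{q-1}\cdot\frac{1}{q^{\,?}}\ \cdots
\]
— more precisely, since the only weights present are $1$ and $m$ (with $N_m(U)=1$), \eqref{eq:pesivett} becomes $N_1(U) + (q^{m-1}+\cdots+1) = q^{m-1}+\cdots+1$ only if $n=m$, so for general $n$ one uses that a point of weight $m$ in a rank-$n$ line contributes $q^{m-1}+\cdots+1$ to the left-hand count while the ambient count is $q^{n-1}+\cdots+1$; balancing forces $N_1(U)=q^{n-1}+\cdots+q^m$ and then \eqref{eq:card} gives $q^{n-1}+\cdots+q^m+1\le \frac{q^n-1}{q-1}=q^{n-1}+\cdots+q+1$, i.e. $q^m\le q^{m-1}+\cdots+q+1+ (\text{terms below }q^m)$, which already holds; so instead the binding constraint must come from a weight $m$ point needing rank at least $m$ and the fact that two distinct points of a rank-$n$ line with weights $w_1,w_2$ satisfy $w_1+w_2\le n$. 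Applying this last fact to $P$ (weight $m$) and any weight-one point gives $m+1\le n$ is the wrong direction; it gives $n\ge m+1$ is false too — rather $n\ge w_1+1 = m+1$ would be a lower bound, so what we actually want is the upper bound, which comes from: $L_U\ne\mathrm{PG}(1,q^m)$ forces (by Proposition~\ref{prop:linset=entirespace} with $k=2$) $n\le m$ unless $L_U$ is the whole line; and if $L_U=\mathrm{PG}(1,q^m)$, an $m$-club on the whole line has $N_1(U)=q^m$ and one concludes via \eqref{eq:pesivett}/Proposition~\ref{prop:size} that $n=m+1$ exactly.

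So the cleaner route for $i=m$ is: either $L_U\subsetneq \mathrm{PG}(1,q^m)$, in which case $n\le m$ by Proposition~\ref{prop:linset=entirespace}, a fortiori $n\le m+1$; or $L_U=\mathrm{PG}(1,q^m)$, in which case Proposition~\ref{prop:linset=entirespace} gives $n\ge m+1$, while the size formula of Proposition~\ref{prop:size} together with $|L_U|=q^m+1$ forces $q^{n-1}+\cdots+q^m+1 = q^m+1$, hence $n=m+1$. In all cases $n\le m+1$, and combined with the first paragraph this establishes the displayed bound.

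I expect the main obstacle to be purely bookkeeping: getting the index ranges in the size formula of Proposition~\ref{prop:size} and in the weight identity \eqref{eq:pesivett} exactly right for an $m$-club (in particular making sure the ``$+1$'' for the heavy point and the geometric-series cutoffs are handled correctly when $n$ is close to $m$), and verifying that an $m$-club that equals the whole projective line really does have rank precisely $m+1$ and not more — this is where Proposition~\ref{prop:linset=entirespace} does the essential work, ruling out $n\ge m+2$ by a direct contradiction with the point count, since a rank-$(m+2)$ subspace covering the whole line would have too many weight-one points to be consistent with having a weight-$m$ point.
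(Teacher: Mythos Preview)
Your handling of the case $i\le m-1$ matches the paper exactly: substitute $k=2$ into Proposition~\ref{prop:genbound}.

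For $i=m$ your eventual argument (the paragraph beginning ``So the cleaner route'') is correct, but it is considerably more elaborate than what the paper does. You split into $L_U\subsetneq\PG(1,q^m)$ versus $L_U=\PG(1,q^m)$, invoke Proposition~\ref{prop:linset=entirespace} in both branches, and then use the size formula of Proposition~\ref{prop:size} to pin down $n=m+1$ in the second branch. This works, and the bookkeeping you worry about in the last paragraph is indeed fine: for $n\ge m+2$ the club would have $q^{n-1}+\cdots+q^m+1>q^m+1$ points, exceeding $|\PG(1,q^m)|$.

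The paper dispenses with all of this in a single line: if $n\ge m+2$, then for \emph{every} point $P=\langle v\rangle_{\fqm}$ the Grassmann formula in the ambient $2m$-dimensional $\fq$-space gives $\dim_{\fq}(U\cap\langle v\rangle_{\fqm})\ge n+m-2m\ge 2$, so no point has weight one and $L_U$ cannot be a club. Your route leans on two cited propositions and a case split; the paper's route is a one-step dimension count that avoids any appeal to Proposition~\ref{prop:linset=entirespace} or Proposition~\ref{prop:size}. Both are valid, but the Grassmann argument is the more direct one and is worth knowing.
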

\begin{proof}
    The first part follows from Proposition \ref{prop:genbound}. When $i=m$ we have that $n\leq m+1$, otherwise every point will have weight at least two. 
\end{proof}


In Theorem \ref{th:mainBound}, we have assumed that $i<m$. We now provide a bound on the rank of $m$-clubs.

\begin{proposition}
    Let $L_U$ be an $m$-club of rank $n$ in PG$(k-1,q^m)$ and let $k\geq 3$. We have
    \[ n\leq  \frac{m(k-1)}2+m.\]
\end{proposition}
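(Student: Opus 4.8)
The statement concerns an $m$-club $L_U$ of rank $n$ in $\PG(k-1,q^m)$ with $k\geq 3$, and the claim is $n\leq \frac{m(k-1)}{2}+m$. The obstruction encountered earlier is that for $m$-clubs one cannot pass to a rank-metric code associated with $U^{\perp'}$, since $U^{\perp'}$ lies in the dual of the weight-$m$ point and hence is degenerate. The plan is therefore to \emph{reduce to the case $i<m$} already handled in Theorem~\ref{th:mainBound}, by producing a subclub inside a smaller projective space.

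\textbf{Key steps.} First, I would let $P=\langle v\rangle_{\fqm}$ be the unique point of weight $m$ in $L_U$, so that $\langle v\rangle_{\fqm}\subseteq U$; here $\langle v\rangle_{\fqm}$ is an $m$-dimensional $\fq$-subspace of $\mathbb{V}$. Pick a hyperplane $\Sigma=\PG(W,\fqm)$ of $\PG(k-1,q^m)$ not through $P$, so $\langle v\rangle_{\fqm}\oplus W=\mathbb{V}$ as $\fqm$-spaces, and $\dim_{\fqm}W=k-1$. Consider the projection of $U$ from $P$ onto $\Sigma$; concretely, using the direct sum decomposition, write the projection map $\pi\colon \mathbb{V}\to W$ along $\langle v\rangle_{\fqm}$, and set $U'=\pi(U)$. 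The intuition is that collapsing the single high-weight point $P$ to $0$ should turn the $m$-club into a scattered linear set $L_{U'}$ in $\PG(W,\fqm)=\PG(k-2,q^m)$ of rank $n-m$. One must check: (i) $\dim_{\fq}U' = n-m$, which follows because $\ker(\pi|_U)=U\cap\langle v\rangle_{\fqm}=\langle v\rangle_{\fqm}$ has $\fq$-dimension exactly $m$; and (ii) $L_{U'}$ is scattered --- if some point $\langle w'\rangle_{\fqm}\in L_{U'}$ had weight $\geq 2$, then its preimage under $\pi$ in $U$ together with $\langle v\rangle_{\fqm}$ would force a point of $L_U$ other than $P$ to have weight $\geq 2$, or force $P$ itself to have weight $>m$, both impossible. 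Then Theorem~\ref{th:rankMaxScattered} applied in $\PG(k-2,q^m)$ gives $n-m\leq \frac{(k-1)m}{2}$, i.e. $n\leq \frac{m(k-1)}{2}+m$, exactly as claimed.

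\textbf{Alternative / the counting route.} If the projection argument is delicate to set up cleanly (the subtlety is controlling how much weight can "land" on each point of $\Sigma$ after projecting), one can instead argue by contradiction as in Proposition~\ref{prop:genbound}: assume $n\geq \frac{m(k-1)}{2}+m+1$; then by Lemma~\ref{lemma:subclub} applied repeatedly we obtain, for each $j$, an $(m-j)$-club of rank $n-j$. Taking $j=1$ produces an $(m-1)$-club of rank $n-1\geq \frac{m(k-1)}{2}+m \geq \frac{m(k-1)}{2}+(m-1)+1$, contradicting the $i\geq m/2$ branch of Theorem~\ref{th:mainBound} with $i=m-1$. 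This is in fact the cleanest path: it uses only already-proved results. I would present this as the main proof and perhaps remark on the geometric projection picture.

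\textbf{Main obstacle.} The genuine difficulty is making sure the reduction is valid at the boundary: in the projection approach, verifying that no weight accumulates, i.e. that $L_{U'}$ is genuinely scattered and has the full expected rank drop of exactly $m$ (not more); in the subclub approach, checking that the numerical inequality $n-1\geq \frac{m(k-1)}{2}+(m-1)+1$ really does hold under the assumption and that $m-1\geq m/2$ (true for $m\geq 2$, and $m=1$ is trivial since then an "$m$-club" is just a point-line degenerate case). I expect the subclub-based contradiction to be essentially immediate once these bookkeeping points are checked, so I would write that version.
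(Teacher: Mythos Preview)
Your chosen proof (the subclub route: drop to an $(m-1)$-club of rank $n-1$ via Lemma~\ref{lemma:subclub} and invoke the $i\geq m/2$ branch of Theorem~\ref{th:mainBound} with $i=m-1$) is exactly what the paper does, the only cosmetic differences being that the paper applies the bound directly rather than by contradiction and prefaces it with the easy remark $n\leq (k-1)m+1$. Your projection alternative is also valid (the key point, that the projected linear set is scattered, boils down to the $k=2$ bound for $m$-clubs on each line through $P$), but the paper does not take that route.
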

\begin{proof}
    First we observe that $n\leq (k-1)m+1$, otherwise all the points of $L_U$ would have weight larger than or equal to two. By Lemma \ref{lemma:subclub}, we have that there exists a subspace $U'$ of $U$ such that $L_{U'}$ is an $(m-1)$-club in PG$(k-1,q^m)$ of rank $n-1\leq m(k-1)$. Therefore, we can apply Theorem \ref{th:mainBound} to $L_{U'}$ obtaining that $n-1\leq m(k-1)/2+m-1$ from which the assertion follows.
\end{proof}

Combining the results of this section, we have proved the following.

\begin{corollary}\label{cor:boundiclub}
    Let $L_U$ be an $i$-club of rank $n$ in PG$(k-1,q^m)$ with $2 \leq i\leq m$. We have
    \[ n\leq  
    \begin{cases}
        m+1 & \text{if }\,\, i= m\,\, \text{and }\, k=2,\\
        \frac{mk}2 & \text{if either }\,\, i\leq m/2 \,\, \text{or } k=2 \ \mbox{and} \ i \leq m-1,\\
        \frac{m(k-1)}2+i & \text{if }\,\, m/2\leq i\leq m \ \mbox{ and }\ k>2.
    \end{cases}
    \]
\end{corollary}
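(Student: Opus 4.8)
The plan is simply to assemble the bounds already obtained in this section into a case analysis on the relative sizes of $i$, $m$ and $k$; no genuinely new argument is required. Indeed, Corollary~\ref{cor:boundiclub} is a repackaging of Theorem~\ref{th:mainBound} together with the two boundary-case propositions (the one treating $k=2$ and the one treating $m$-clubs for $k\ge 3$), so the only real content is to verify that the three lines of the displayed bound exhaust all admissible triples $(i,k,m)$ with $2\le i\le m$, and that the bounds agree on the overlaps.

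First I would dispose of $k=2$. Here Theorem~\ref{th:mainBound} is neither available nor needed, and I invoke instead the proposition giving, for an $i$-club of rank $n$ in $\PG(1,q^m)$, the bound $n\le m$ when $i\le m-1$ and $n\le m+1$ when $i=m$. Since $m=mk/2$ for $k=2$, this is precisely the first line of the corollary ($i=m$, $k=2$) together with the $k=2$ portion of the second line ($i\le m-1$).

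Next, assume $k>2$ and split on the size of $i$. If $2\le i\le m/2$, Case~1 of Theorem~\ref{th:mainBound} gives $n\le mk/2$, which is the remaining part of the second line. If $m/2\le i\le m-1$, Case~2 of Theorem~\ref{th:mainBound} gives $n\le m(k-1)/2+i$. Finally, if $i=m$ (and $k\ge 3$), the proposition on $m$-clubs gives $n\le m(k-1)/2+m$, which is exactly the $i=m$ instance of $m(k-1)/2+i$; these last two subcases together yield the third line. I would then remark that when $m$ is even the value $i=m/2$ is covered by both the second and the third line, and there $mk/2 = m(k-1)/2+m/2$, so the two bounds coincide and there is no inconsistency.

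There is essentially no obstacle: the only point requiring care is the bookkeeping of which hypotheses ($i<m$ versus $i=m$, $k=2$ versus $k>2$, $i\le m/2$ versus $i\ge m/2$) are active in each invoked result, so as to be sure every admissible $(i,k,m)$ lands in at least one case. One should also note that the standing assumption $2\le i$ makes the regime $i\le m/2$ vacuous when $m\le 3$, so that those small values of $m$ are automatically handled by the $i\ge m/2$ branch (for $k>2$) or by the $k=2$ proposition.
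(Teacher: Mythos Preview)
Your proposal is correct and is precisely the approach the paper takes: the corollary is stated with the preface ``Combining the results of this section, we have proved the following'' and no further proof, so your explicit case analysis invoking Theorem~\ref{th:mainBound}, the $k=2$ proposition, and the $m$-club proposition (for $k\ge 3$) is exactly the intended assembly. Your checks on the overlap at $i=m/2$ and on the exhaustiveness of the cases are a welcome addition of detail beyond what the paper spells out.
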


\begin{remark}
    We note the above bounds are not always sharp. Indeed, in the case in which $i=k=2$ and $m=5$, De Boeck and Van de Voorde in \cite{de2016linear} have proved that there do not exist $2$-clubs of rank $5$ in $\PG(1,q^5)$.
    As we will see in the next section, in the case in which $i\geq m/2$ and $k>3$ the bounds are tight. 
\end{remark}
 
\section{Constructions}\label{sec:constr}

In this section, we describe several explicit constructions of $i$‑clubs whose rank attains equality in Corollary \ref{cor:boundiclub} for the case $i \geq m/2$. We begin with what we call the \emph{cone construction}, which establishes the existence of an $i$‑club with maximum rank whenever $i\ge m/2$ and $m(k-1)$ is even. Next, we introduce two further constructions — one for odd $k$ and one for even $k$ — and demonstrate that neither is $\Gamma\mathrm{L}(k,q^m)$‑equivalent to the cone construction. Our non‐equivalence proofs rely on a detailed analysis of the weight‐hyperplane distribution in each case.

\subsection{Cone construction}

In this section, we focus on providing an example of an $i$-club of maximum rank, i.e.\ reaching the equality in Corollary \ref{cor:boundiclub}, for any $i\geq m/2$. 
The main idea is to construct it by considering a maximum scattered linear set in a hyperplane and a point external to the hyperplane.

As a consequence of Theorem \ref{th:existencescattered}, we obtain a first existence results for $i$-clubs in projective spaces whenever $i\geq m/2$.

\begin{proposition}[Cone construction]\label{prop:coneconstruct}
    Let $k$ and $m$ be two positive integers such that $(k-1)m$ is even.
    Let $H=\PG(W,\fqm)$ be a hyperplane of $\mathrm{PG}(k-1,q^m)$. Let $L_{U'}$ be a maximum scattered $\fq$-linear set in $H$.
    Let $x\in \mathbb{F}_{q^m}^k\setminus\{0\}$, such that $\langle x\rangle_{\mathbb{F}_{q^m}}\cap  W= \{0\}$. If $U''$ is an $i$-dimensional $\fq$-subspace of $\langle x\rangle_{\mathbb{F}_{q^m}}$, for some positive integer $i\geq m/2$, and 
    $$U=U'+U'',$$
    then $L_U$ is an $i$-club of rank $n=\frac{(k-1)m}{2}+i$.
    In particular, for every $k$ and $m$ positive integers such that $(k-1)m$ is even, and $q$ a prime power, there exists an $i$-club of maximum rank in $\mathrm{PG}(k-1,q^m)$.
\end{proposition}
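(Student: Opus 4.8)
The plan is to realise $L_U$ by a ``coning'' procedure: glue a maximum scattered $\fq$-linear set sitting inside the hyperplane $H$ to the single point $\langle x\rangle_{\fqm}$, which will carry all of the extra weight, and then invoke Lemma~\ref{le:iclubandscatered}. The only genuine existence input is Theorem~\ref{th:existencescattered}; everything else is direct-sum bookkeeping.

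First I would assemble the two pieces. Since $W$ is a hyperplane of $\fqm^k$, we have $H=\PG(W,\fqm)=\PG(k-2,q^m)$, and because $(k-1)m$ is even, Theorem~\ref{th:existencescattered} guarantees a maximum scattered $\fq$-linear set $L_{U'}$ in $H$, so $\dim_{\fq}(U')=\frac{(k-1)m}{2}$. On the other side, $U''\subseteq\langle x\rangle_{\fqm}$ is an $i$-dimensional $\fq$-subspace, so every nonzero vector of $U''$ spans $\langle x\rangle_{\fqm}$ over $\fqm$; hence $L_{U''}$ consists of the single point $\langle x\rangle_{\fqm}$, of weight $\dim_{\fq}(U'')=i$ in it. In particular $L_{U''}$ is, trivially, an $i$-club of rank $i$.

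Next I would check the hypotheses of Lemma~\ref{le:iclubandscatered} with $T_1=\langle x\rangle_{\fqm}$ and $T_2=H$. The assumption $\langle x\rangle_{\fqm}\cap W=\{0\}$ gives $\fqm^k=W\oplus\langle x\rangle_{\fqm}$, i.e.\ $T_1\oplus T_2=\PG(k-1,q^m)$; moreover $L_{U''}\subseteq T_1$ is an $i$-club, $L_{U'}\subseteq T_2$ is scattered, and the sum $U=U'+U''$ is direct since $U'\subseteq W$, $U''\subseteq\langle x\rangle_{\fqm}$ and $W\cap\langle x\rangle_{\fqm}=\{0\}$. Lemma~\ref{le:iclubandscatered} then yields that $L_U$ is an $i$-club of rank $\dim_{\fq}(U')+\dim_{\fq}(U'')=\frac{(k-1)m}{2}+i=n$. (The weight of the special point can also be checked directly: if $u'+u''\in\langle x\rangle_{\fqm}$ with $u'\in U'$ and $u''\in U''$, then $u'\in W\cap\langle x\rangle_{\fqm}=\{0\}$, so $U\cap\langle x\rangle_{\fqm}=U''$, of $\fq$-dimension $i$.)

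Finally, for the ``in particular'' clause I would observe that every step can be carried out for all positive integers $k,m$ with $(k-1)m$ even and every prime power $q$: the only nontrivial ingredient is the maximum scattered linear set $L_{U'}$, whose existence is precisely Theorem~\ref{th:existencescattered}. Hence an $i$-club of rank $\frac{(k-1)m}{2}+i$ exists, and for $i\ge m/2$ and $k\ge 3$ this equals the upper bound in Corollary~\ref{cor:boundiclub}, so such a club has maximum rank. I do not expect a real obstacle here: the proof is essentially bookkeeping, and the only points needing a moment of care are recognising that a single point of weight $i$ qualifies as an $i$-club (so that Lemma~\ref{le:iclubandscatered} applies verbatim) and checking that the sums of subspaces involved are direct.
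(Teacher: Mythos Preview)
Your proof is correct and follows exactly the paper's approach: the paper's proof is a one-liner invoking Lemma~\ref{le:iclubandscatered} (with the single point $\langle x\rangle_{\fqm}$ playing the role of the $i$-club summand and $L_{U'}$ the scattered summand) together with Theorem~\ref{th:existencescattered} for the existence of the maximum scattered piece. Your write-up simply unpacks these references with the direct-sum bookkeeping made explicit.
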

\begin{proof}
     The statement is a consequence of Lemma \ref{le:iclubandscatered} and from the existence of maximum scattered linear sets $H$, due to Theorem \ref{th:existencescattered}.\\
\end{proof}

Therefore, we have that the bound of Corollary \ref{cor:boundiclub} is tight when $i\geq m/2$.

\begin{corollary}
    For any positive integers $k,m$ such that $(k-1)m$ is even and any $i\geq m/2$, the bound in Corollary \ref{cor:boundiclub} is tight.
\end{corollary}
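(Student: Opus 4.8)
Since the upper bound itself is exactly the content of Corollary~\ref{cor:boundiclub}, the plan is simply to exhibit, for each triple $(k,m,i)$ with $(k-1)m$ even and $i\geq m/2$, an $i$-club whose rank meets that bound. For $k\geq 3$ this is nothing more than Proposition~\ref{prop:coneconstruct}: under the parity hypothesis $(k-1)m$ even, the cone construction produces an $i$-club of rank exactly $\frac{m(k-1)}{2}+i$. The only thing left is then to check, branch by branch, that this number agrees with the bound: for $m/2<i\leq m$ and $k>2$ it is verbatim the third case of Corollary~\ref{cor:boundiclub}, while for $i=m/2$ (which forces $m$ even) it simplifies to $\frac{mk}{2}$, the second case. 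So a single family of examples settles both branches at once, with no gap at the boundary value $i=m/2$.

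The case $k=2$ requires a separate argument, because there the ``hyperplane'' of $\PG(1,q^m)$ feeding the cone construction is a single point, which supports no scattered $\fq$-linear set of rank $m/2$ once $m>2$. Here $(k-1)m=m$ is even, so $m$ is even, and for $i\geq m/2$ the bound of Corollary~\ref{cor:boundiclub} reads $n\leq m$ when $i\leq m-1$ and $n\leq m+1$ when $i=m$. I would realise maximal-rank examples directly on the line: for $i=m/2$ the club $L_T$ of~\eqref{eq:firstKMarc} with $\ell=2$, $n=m/2$ has rank $m$; for $i=m-1$ the classical club $L_{\mathrm{Tr}_{q^m/q}}$ has rank $m$; and for $i=m$ the subspace $U=\langle(1,0)\rangle_{\fqm}\oplus\langle(0,1)\rangle_{\fq}$ of $\F_{q^m}^2$ gives, after a direct weight computation, an $m$-club of rank $m+1$ (indeed $L_U=\PG(1,q^m)$). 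The remaining values $m/2<i<m-1$ are governed by the classical theory of clubs on the projective line, for which I would invoke the constructions recalled in Section~\ref{sec:genclubs} (e.g.\ \cite{de2016linear,napolitano2022clubs,bartoli2022r}).

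I do not expect a genuine obstacle here: once Proposition~\ref{prop:coneconstruct} is available, the corollary for $k\geq 3$ is the purely arithmetic remark that $\frac{m(k-1)}{2}+i$ is the relevant branch of the bound. The two points that deserve a moment's attention are the coincidence of the second and third branches at $i=m/2$ (so that one construction suffices across the boundary) and the degeneration of the cone construction when $k=2$; both are handled exactly as above.
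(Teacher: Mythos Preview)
Your approach for $k\geq 3$ is exactly the paper's: the corollary is stated without proof, immediately after Proposition~\ref{prop:coneconstruct}, as its direct consequence; the arithmetic check that $\frac{m(k-1)}{2}+i$ matches the relevant branch of Corollary~\ref{cor:boundiclub} (including the coincidence at $i=m/2$) is all that is needed, and you carry it out correctly.

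You go further than the paper in singling out $k=2$, and you are right that the cone construction degenerates there: a hyperplane of $\PG(1,q^m)$ is a point, which supports no scattered $\fq$-linear set of rank $m/2$ once $m>2$. The paper does not treat this case separately; it simply lets Proposition~\ref{prop:coneconstruct} stand for all $k$, and the remark just before Section~\ref{sec:constr} in fact only claims tightness for $k>3$. Your explicit examples for $i\in\{m/2,\,m-1,\,m\}$ on the line are fine, but your blanket appeal to ``the constructions recalled in Section~\ref{sec:genclubs}'' for the intermediate range $m/2<i<m-1$ is not justified by what is actually listed there: the trace, the family~\eqref{eq:firstKMarc}, and the $(m{-}2)$-classification do not visibly cover every such $i$ (e.g.\ $m=8$, $i=5$). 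This is not a defect relative to the paper's own argument---which does not address it either---but you should not present that sentence as if it closes the case.
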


Our aim is also to describe the possible weights of hyperplanes with respect to the linear set constructed in the above proposition. The construction is a \emph{direct sum}, which allows us to exploit its structure by extending the following result.

\begin{proposition} \label{prop:weightbyintersectioninfinityind}(see \cite[Proposition 3.2]{AMSZ23})
Let $H=\PG(W,\fqm)$ be a hyperplane of $\mathrm{PG}(k-1,q^m)$. Let $L_{U'}$ be an $\fq$-linear set in $H$ and let $P =\langle x\rangle_{\mathbb{F}_{q^m}}$ be a point not in $H$.
Let $U''$ be a $1$-dimensional $\fq$-subspace of $\langle x\rangle_{\mathbb{F}_{q^m}}$ and  $U:=U'+U''$.
Consider $\Omega=\PG(T,\F_{q^m})$ a subspace of $\PG(k-1,q^m)$.
We have that
\[ w_{L_U}(\Omega) \in \{w_{L_{U'}}(H\cap \Omega),w_{L_{U'}}(H\cap \Omega)+1\}. \]
\end{proposition}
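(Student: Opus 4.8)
The plan is to compute $w_{L_U}(\Omega) = \dim_{\fq}(U \cap T)$ directly and sandwich it between the two claimed values. Since $U = U' + U''$ with $U' \subseteq W$ and $U''$ a $1$-dimensional $\fq$-subspace of $\langle x\rangle_{\fqm}$, and since $\langle x\rangle_{\fqm} \cap W = \{0\}$ (as $P \notin H$), the sum $U' + U''$ is direct. The key structural fact I would exploit is that $U'$ lives entirely inside the hyperplane $W$, while $U''$ sits along the transversal direction $\langle x\rangle_{\fqm}$.

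First I would establish the lower bound. Any vector in $U' \cap T$ lies in $U' \subseteq W$, so $U' \cap T = U' \cap (W \cap T)$. Thus $\dim_{\fq}(U \cap T) \geq \dim_{\fq}(U' \cap T) = \dim_{\fq}(U' \cap (W\cap T)) = w_{L_{U'}}(H \cap \Omega)$, using that $H \cap \Omega = \PG(W \cap T, \fqm)$. This gives $w_{L_U}(\Omega) \geq w_{L_{U'}}(H\cap\Omega)$.

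For the upper bound I would consider the projection argument. Write any $u \in U \cap T$ uniquely as $u = u' + u''$ with $u' \in U'$ and $u'' \in U''$ (using the directness of the sum). Consider the $\fq$-linear map $\pi \colon U \cap T \to U''$ sending $u \mapsto u''$. Its kernel is exactly $\{u \in U \cap T : u \in U'\} = U' \cap T$, which as above equals $U' \cap (W \cap T)$ and has $\fq$-dimension $w_{L_{U'}}(H\cap\Omega)$. Since the image of $\pi$ is a subspace of the $1$-dimensional space $U''$, the rank-nullity theorem yields
\[
\dim_{\fq}(U\cap T) = \dim_{\fq}(\ker \pi) + \dim_{\fq}(\operatorname{im}\pi) \leq w_{L_{U'}}(H\cap\Omega) + 1.
\]
Combining the two bounds gives $w_{L_U}(\Omega) \in \{w_{L_{U'}}(H\cap\Omega), w_{L_{U'}}(H\cap\Omega)+1\}$, as the value is an integer lying in a length-one interval.

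The only subtle point—and the step I would double-check most carefully—is the identification $\ker\pi = U' \cap T$, which relies crucially on the directness of $U' + U''$: if $u = u' + u'' \in T$ has $u'' = 0$, then indeed $u = u' \in U' \cap T$, and conversely every element of $U' \cap T$ maps to $0$. This is where the hypothesis $P \notin H$ (equivalently $\langle x\rangle_{\fqm} \cap W = \{0\}$) is indispensable, since without it the decomposition $u = u' + u''$ would not be unique and the kernel computation would fail. Everything else is routine linear algebra over $\fq$, so I do not anticipate any real obstacle once the direct-sum structure is in place.
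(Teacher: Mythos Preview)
Your argument is correct: the direct-sum decomposition $U = U' \oplus U''$ (which holds because $P \notin H$ forces $\langle x\rangle_{\fqm}\cap W=\{0\}$) together with the projection onto the one-dimensional factor $U''$ gives exactly the sandwich $w_{L_{U'}}(H\cap\Omega)\le \dim_{\fq}(U\cap T)\le w_{L_{U'}}(H\cap\Omega)+1$ via rank--nullity, and your identification of the kernel with $U'\cap(W\cap T)$ is clean and fully justified.

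Note, however, that the paper does not actually supply a proof of this proposition: it is quoted from \cite[Proposition~3.2]{AMSZ23} and used as a black box in the subsequent Proposition~\ref{prop:weightbyintersectioninfinity}, so there is no in-paper argument to compare against. Your self-contained proof is a perfectly acceptable (and natural) justification of the cited result.
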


In the next, we extend the above result by replacing $U''$ with a subspace of higher dimension.

\begin{proposition} \label{prop:weightbyintersectioninfinity}
Let $H=\PG(W,\fqm)$ be a hyperplane of $\mathrm{PG}(k-1,q^m)$. Let $L_{U'}$ be an $\fq$-linear set in $H$ and let $P =\langle x\rangle_{\mathbb{F}_{q^m}}$ be a point not in $H$.
Let $U''$ be an $i$-dimensional $\fq$-subspace of $\langle x\rangle_{\mathbb{F}_{q^m}}$ and  $U:=U'+U''$.
Consider $\Omega=\PG(T,\F_{q^m})$ a subspace of $\PG(k-1,q^m)$.
We have that
\[ w_{L_U}(\Omega) \in \{w_{L_{U'}}(H\cap \Omega),\ldots,w_{L_{U'}}(H\cap \Omega)+i\}. \]
\end{proposition}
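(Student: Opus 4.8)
The plan is to reduce Proposition \ref{prop:weightbyintersectioninfinity} to the already-established rank-one case in Proposition \ref{prop:weightbyintersectioninfinityind} by writing the $i$-dimensional subspace $U''\subseteq \langle x\rangle_{\fqm}$ as a chain of subspaces increasing by one dimension at a time. Concretely, fix an $\fq$-basis $x_1,\dots,x_i$ of $U''$ (each $x_j\in\langle x\rangle_{\fqm}$) and set $U''_0=\{0\}$, $U''_j=\langle x_1,\dots,x_j\rangle_{\fq}$ for $1\le j\le i$, so that $U''_i=U''$. Define $U_j=U'+U''_j$; then $U_0=U'$ and $U_i=U$, and each $U_{j}$ is obtained from $U_{j-1}$ by adding a $1$-dimensional $\fq$-subspace of $\langle x\rangle_{\fqm}$, namely $\langle x_j\rangle_{\fq}$. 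The key observation is that $U_{j-1}$ still has the shape ``linear set in $H$ plus a point off $H$'' required by Proposition \ref{prop:weightbyintersectioninfinityind}: indeed $L_{U_{j-1}}$ lives in $H$ only when $j=1$; for $j\ge 2$ one instead regards $U_{j-1}$ as $\widetilde{U}' + \langle x_j\rangle_{\fq}$ where $\widetilde{U}' = U' + U''_{j-1}$, but this $\widetilde{U}'$ is no longer contained in $H$.

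To handle that, I would apply Proposition \ref{prop:weightbyintersectioninfinityind} with the roles slightly rearranged: at each step we add the $1$-dimensional space $\langle x_j\rangle_{\fq}\subseteq\langle x\rangle_{\fqm}$ to the previously built space, so the hypothesis we actually need is that adding a $1$-dimensional $\fq$-subspace of a point $P=\langle x\rangle_{\fqm}$ to \emph{any} $\fq$-subspace $V$ with $\langle x\rangle_{\fqm}\cap V$ of small dimension changes the weight of $\Omega$ by at most one. A cleaner route is therefore the following inductive statement, proved by induction on $i$: if $U=V+U''$ with $U''$ an $i$-dimensional $\fq$-subspace of $\langle x\rangle_{\fqm}$ and $P\notin H\supseteq L_V$... — but again $V$ need not sit in $H$. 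So the honest fix is to prove the one-step bound directly and not rely on the exact wording of Proposition \ref{prop:weightbyintersectioninfinityind}: for any $\fq$-subspace $V$ of $\F_{q^m}^k$ and any $1$-dimensional $\fq$-subspace $\langle v\rangle_{\fq}$ with $v\notin V$, one has $\dim_{\fq}((V+\langle v\rangle_{\fq})\cap T) \in \{\dim_{\fq}(V\cap T),\dim_{\fq}(V\cap T)+1\}$ for every $\fqm$-subspace $T$, simply because $(V+\langle v\rangle_{\fq})\cap T$ contains $V\cap T$ with codimension at most $1$. Applying this at each of the $i$ steps gives $w_{L_U}(\Omega) = \dim_{\fq}(U\cap T) \le \dim_{\fq}(U'\cap T) + i = w_{L_{U'}}(H\cap \Omega) + i$, using $U'\cap T \subseteq U'\cap W = U'\cap(W\cap T)$ since $U'\subseteq W$; and monotonicity $U'\subseteq U$ gives the lower bound $w_{L_U}(\Omega)\ge w_{L_{U'}}(H\cap\Omega)$. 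Since each step increments by $0$ or $1$, every intermediate value in $\{w_{L_{U'}}(H\cap\Omega),\dots,w_{L_{U'}}(H\cap\Omega)+i\}$ is a priori possible, which is exactly the claimed membership.

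The main obstacle I anticipate is purely bookkeeping: making sure that at each step the subspace being added really is a $1$-dimensional $\fq$-subspace of the fixed point $\langle x\rangle_{\fqm}$ and really is \emph{not} contained in the space built so far, so that the dimension genuinely can only go up by $0$ or $1$ and Proposition \ref{prop:weightbyintersectioninfinityind} (or its one-line generalization) applies cleanly. Since $U''$ is $i$-dimensional over $\fq$ and its generators $x_1,\dots,x_i$ are $\fq$-independent, $x_j\notin U''_{j-1}$; and $x_j\in\langle x\rangle_{\fqm}$ while $U'\subseteq W$ with $\langle x\rangle_{\fqm}\cap W=\{0\}$ forces $x_j\notin U' + U''_{j-1} = U_{j-1}$, so the chain $U_0\subsetneq U_1\subsetneq\cdots\subsetneq U_i$ is strict with each quotient of $\fq$-dimension $1$. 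Feeding this chain into the one-step bound $i$ times, together with the two extremal inequalities above, yields the proposition. I would write the argument as a short induction on $i$, citing Proposition \ref{prop:weightbyintersectioninfinityind} for the base step $i=1$ and for the inductive step applying it to the pair $(L_{U_{i-1}},\Omega)$ after re-expressing $U_{i-1}$ as ``the span of a configuration for which the proposition's hypotheses hold''.
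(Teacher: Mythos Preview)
Your argument is correct and rests on the same elementary observation as the paper's: adding a one-dimensional $\fq$-subspace can raise $\dim_{\fq}(\,\cdot\,\cap T)$ by at most one, together with $U'\cap T = U'\cap(W\cap T)$ since $U'\subseteq W$. The paper organises this more efficiently: instead of iterating the one-step bound $i$ times and worrying (as you do) about whether Proposition~\ref{prop:weightbyintersectioninfinityind} still applies once the intermediate space leaves $H$, it applies that proposition \emph{once} to $\overline{U}=U'+\tilde{U}$ with $\tilde{U}$ a single one-dimensional $\fq$-subspace of $U''$ (here the hypotheses are genuinely met), and then absorbs the remaining $i-1$ dimensions in a single crude inequality $w_{L_U}(\Omega)\le w_{L_{\overline{U}}}(\Omega)+(i-1)$ coming from $\dim_{\fq}(U)=\dim_{\fq}(\overline{U})+i-1$. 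This avoids the bookkeeping you flagged. Your final sentence, however, reverts to citing Proposition~\ref{prop:weightbyintersectioninfinityind} at the inductive step for $U_{i-1}$, which you had already (correctly) argued does not satisfy its hypotheses; the clean fix is either the paper's one-shot application or, as your middle paragraph does, to drop the reference entirely and use the bare linear-algebra bound throughout.
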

\begin{proof}
    Consider $\overline{U}=U'+\tilde{U}$, where $\tilde{U}$ is a one-dimensional $\fq$-subspace of $U''$ and observe that
    \begin{equation}\label{eq:addi-1atmost}
        w_{L_U}(\Omega)\leq w_{L_{\overline{U}}}(\Omega)+i-1,
    \end{equation}
    as $\dim_{\fq}(U)=\dim_{\fq}(\overline{U})+i-1$.
    By Proposition \ref{prop:weightbyintersectioninfinityind}, we know that 
    \[w_{L_{\overline{U}}}(\Omega) \in \{w_{L_{U'}}(H\cap \Omega),w_{L_{U'}}(H\cap \Omega)+1\},\]
    and so, by \eqref{eq:addi-1atmost} we have that $w_{L_U}(\Omega) \leq w_{L_{U'}}(H\cap \Omega)+i$.
\end{proof}

In order to compute the possible weights of the hyperplanes with respect to the above constructed $i$-club we need to know which are the weights of the hyperplane with respect to a maximum scattered linear set.

\begin{theorem}{(see \cite[Theorem 4.2]{blokhuis2000scattered})}
 \label{th:inter}
If $L_U$ is a maximum scattered $\F_q$-linear set of PG$(k-1,q^m)$, then for any hyperplane $H$ of PG$(k-1,q^m)$ we have
\[
w_{L_{U}}(H) \in \left\{  \frac{km}{2}-m,  \frac{km}{2}-m+1\right\}.
\]
\end{theorem}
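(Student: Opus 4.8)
The plan is to prove the two bounds separately: the lower bound is a routine dimension count, while the real work goes into the upper bound, which I would reduce via duality to the scatteredness of the dual subspace.

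For the lower bound, write $H=\PG(W,\fqm)$ with $\dim_{\fqm}(W)=k-1$, so $\dim_{\fq}(W)=(k-1)m$, and recall $\dim_{\fq}(U)=\tfrac{km}{2}$ since $L_U$ is maximum scattered. Grassmann's identity inside $\mathbb{V}$ (of $\fq$-dimension $km$) gives immediately
\[
w_{L_U}(H)=\dim_{\fq}(U\cap W)\ \geq\ \dim_{\fq}(U)+\dim_{\fq}(W)-km\ =\ \frac{km}{2}-m.
\]

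For the upper bound, I would first note that the naive estimate does not suffice: $U\cap W$ is a scattered $\fq$-subspace of $W\cong\fqm^{k-1}$, so Theorem~\ref{th:rankMaxScattered} only yields $w_{L_U}(H)\leq\tfrac{(k-1)m}{2}$. To sharpen this, apply Proposition~\ref{prop:dualityproperties}(v) to the pair $(U,W)$ with $s=k-1$ and $t=\tfrac{km}{2}$, using $W^{\perp'}=W^{\perp}$ (Proposition~\ref{prop:dualityproperties}(iv)):
\[
\dim_{\fq}\big(U^{\perp'}\cap W^{\perp}\big)-\dim_{\fq}(U\cap W)\ =\ km-\frac{km}{2}-(k-1)m\ =\ m-\frac{km}{2},
\]
hence $w_{L_U}(H)=\tfrac{km}{2}-m+\dim_{\fq}(U^{\perp'}\cap W^{\perp})$. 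Now $W^{\perp}$ is a $1$-dimensional $\fqm$-subspace, i.e.\ a point of $\PG(k-1,q^m)$, and $\dim_{\fq}(U^{\perp'}\cap W^{\perp})$ is exactly the weight of that point in $L_{U^{\perp'}}$; as $H$ runs over all hyperplanes, $W^{\perp}$ runs over all points. Therefore the asserted bound $w_{L_U}(H)\leq\tfrac{km}{2}-m+1$ is \emph{equivalent} to the statement that every point of $L_{U^{\perp'}}$ has weight at most one. Since $\dim_{\fq}(U^{\perp'})=km-\tfrac{km}{2}=\tfrac{km}{2}$ by Proposition~\ref{prop:dualityproperties}(ii), and a scattered linear set in $\PG(k-1,q^m)$ has rank at most $\tfrac{km}{2}$ by Theorem~\ref{th:rankMaxScattered}, this is precisely the statement that $U^{\perp'}$ is again maximum scattered.

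So the theorem reduces to the single fact that \emph{the $\perp'$-dual of a maximum scattered $\fq$-subspace is maximum scattered}. My approach to this would be through rank-metric codes: a code $\C$ associated with the system $U^{\perp'}$ has length $\tfrac{km}{2}$ and dimension $k$, and by Theorem~\ref{th:connection} together with the identity above its nonzero weights are $m-w_{L_U}(\langle x\rangle_{\fqm})$ for $x\neq 0$; since $L_U$ is scattered these all lie in $\{m-1,m\}$, with $m-1$ actually attained, so $\C$ is an $[\tfrac{km}{2},k,m-1]_{q^m/q}$ code and the Singleton bound \eqref{eq:boundgen} is met, i.e.\ $\C$ is MRD. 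The step I expect to be the main obstacle is concluding from here that $U^{\perp'}$ is itself scattered — equivalently, that the code associated with $U$ (the geometric dual of $\C$) is MRD, equivalently that $U^{\perp'}$ contains no point of weight $\geq 2$. Since the geometric dual of an MRD code need not be MRD, this genuinely needs extra input: it is the geometric content of \cite[Theorem~4.2]{blokhuis2000scattered}, obtained there via the Desarguesian-spread model, and it can alternatively be read off from the MRD$\leftrightarrow$maximum-scattered correspondence of \cite{zini2021scattered,marino2023evasive}. Everything else in the argument is duality bookkeeping.
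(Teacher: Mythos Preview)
The paper does not prove this theorem; it quotes it from \cite{blokhuis2000scattered} and uses it as a black box, so there is no in-paper argument to compare your proposal against.

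On its own merits: your lower bound is correct and routine, and your duality reduction of the upper bound is correct --- but it is a \emph{reformulation}, not a proof. Via \eqref{eq:dualweight}, the assertion ``$w_{L_U}(H)\le \tfrac{km}{2}-m+1$ for every hyperplane $H$'' is literally the same statement as ``$U^{\perp'}$ is scattered'', so at this point you have only restated the theorem. The MRD passage does not move you forward either: your own computation shows that the nonzero weights of $\C$ (the code with system $U^{\perp'}$) are $m-w_{L_U}(\langle x\rangle_{\fqm})$, so ``$\C$ is MRD'' is nothing but the \emph{hypothesis} that $U$ is scattered, written in code-theoretic language. What you need is that the code with system $U$ is MRD (equivalently, that $U^{\perp'}$ is scattered), and you correctly flag this as the crux.

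The gap is the source you invoke for it. Citing \cite[Theorem~4.2]{blokhuis2000scattered} is circular, since that is exactly the result under discussion; and the MRD$\leftrightarrow$maximum-scattered correspondence of \cite{zini2021scattered,marino2023evasive} uses Theorem~\ref{th:inter} (or the equivalent duality statement) in one of its two directions, so appealing to it is circular as well. A genuine proof has to supply one of the following: (a) the original Desarguesian-spread dimension argument of Blokhuis--Lavrauw; (b) an independent proof that the $\perp'$-dual of a maximum scattered subspace is scattered (this is the content of \cite[Theorem~3.5]{polverino2010linear}, which the present paper itself invokes in the proof of Theorem~\ref{th:classn-1clubmaximum}); or (c) an independent proof of the converse implication ``two-intersection hyperplane profile $\Rightarrow$ scattered'' for $\fq$-subspaces of dimension $km/2$. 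Everything else in your write-up is, as you say, duality bookkeeping.
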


Combining Proposition \ref{prop:weightbyintersectioninfinity} and Theorem \ref{th:inter}, we find the possible weights of the hyperplanes with respect to the $i$-club constructed in Proposition \ref{prop:coneconstruct}.

\begin{theorem}\label{th:coneconstruct}
Let $k$ and $m$ be two positive integers such that $(k-1)m$ is even.
Let $H=\PG(W,\fqm)$ be a hyperplane of $\mathrm{PG}(k-1,q^m)$. Let $L_{U'}$ be a maximum scattered $\fq$-linear set in $H$.
Let $x\in \mathbb{F}_{q^m}^k$, such that $\langle x\rangle_{\mathbb{F}_{q^m}}\cap  W= \{0\}$. Let $U''$ be an $i$-dimensional $\fq$-subspace of $\langle x\rangle_{\mathbb{F}_{q^m}}$, for some positive integer $i\geq m/2$, and let $U:=U'+U''$.
For any hyperplane $\Omega$ of $\mathrm{PG}(k-1,q^m)$, we have that
\[ w_{L_U}(\Omega)\in \left\{ \frac{m(k-1)}2,\frac{m(k-2)}2, \frac{m(k-2)}2+1,\ldots, \frac{m(k-2)}2+i+1 \right\}. \]
In particular, there exists an hyperplane having weight $\frac{m(k-1)}2$.
\end{theorem}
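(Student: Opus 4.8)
The plan is to combine the two ingredients that the preceding results make available: Proposition~\ref{prop:weightbyintersectioninfinity}, which tells us that for the direct-sum construction $U = U' + U''$ the weight of a subspace $\Omega$ with respect to $L_U$ lies in $\{w_{L_{U'}}(H\cap\Omega),\ldots,w_{L_{U'}}(H\cap\Omega)+i\}$, and Theorem~\ref{th:inter}, which pins down the weights of hyperplanes of $H$ with respect to the maximum scattered linear set $L_{U'}$. The first step is to fix a hyperplane $\Omega=\PG(T,\fqm)$ of $\PG(k-1,q^m)$ and split into the two geometric cases that can occur: either $\Omega = H$, or $\Omega \neq H$, in which case $H\cap\Omega$ is a hyperplane of $H$ (i.e.\ a subspace of projective dimension $k-3$ inside $H\cong\PG(k-2,q^m)$).

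In the case $\Omega = H$, I would simply observe that $U''\cap W = \{0\}$ since $\langle x\rangle_{\fqm}\cap W=\{0\}$, so $U\cap W = U'\cap W = U'$ (as $L_{U'}\subseteq H$), giving $w_{L_U}(H)=\dim_{\fq}(U')=\frac{(k-1)m}{2}=\frac{m(k-1)}{2}$. This already establishes the ``in particular'' clause. In the case $\Omega\neq H$, $H\cap\Omega$ is a hyperplane of the projective space $H$, which carries the \emph{maximum} scattered linear set $L_{U'}$ of rank $(k-1)m/2$; applying Theorem~\ref{th:inter} with $k$ replaced by $k-1$ gives
\[
w_{L_{U'}}(H\cap\Omega)\in\left\{\frac{(k-1)m}{2}-m,\ \frac{(k-1)m}{2}-m+1\right\}=\left\{\frac{m(k-3)}{2},\ \frac{m(k-3)}{2}+1\right\}.
\]
Feeding both of these values into Proposition~\ref{prop:weightbyintersectioninfinity}, the possible weights $w_{L_U}(\Omega)$ range over $\{\frac{m(k-3)}{2},\ldots,\frac{m(k-3)}{2}+i\}\cup\{\frac{m(k-3)}{2}+1,\ldots,\frac{m(k-3)}{2}+1+i\}$, whose union is exactly $\{\frac{m(k-3)}{2},\ldots,\frac{m(k-3)}{2}+i+1\}$. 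Rewriting $\frac{m(k-3)}{2}=\frac{m(k-2)}{2}-\frac{m}{2}$ would match this against the stated interval; I would need to double-check the index bookkeeping so that the claimed set $\{\frac{m(k-1)}{2}\}\cup\{\frac{m(k-2)}{2},\ldots,\frac{m(k-2)}{2}+i+1\}$ comes out correctly (in particular reconciling the $\frac{m(k-3)}{2}$ from the case analysis with the $\frac{m(k-2)}{2}$ in the statement — this is the one place where a shift of $m/2$ must be tracked carefully, and it is presumably why the hypothesis $i\geq m/2$ is invoked, since then $\frac{m(k-3)}{2}+i+1\geq \frac{m(k-2)}{2}+1$ and the listed intervals genuinely overlap or abut, making the union an honest interval).

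The main obstacle, then, is not any deep argument but getting the interval arithmetic exactly right and making sure that Proposition~\ref{prop:weightbyintersectioninfinity} is legitimately applicable: it is stated for $U'' $ a subspace of a \emph{one}-dimensional $\fqm$-space $\langle x\rangle_{\fqm}$ with $P\notin H$, which is precisely our situation, so this is fine. A secondary point worth a sentence is that Proposition~\ref{prop:weightbyintersectioninfinity} only gives an upper bound of the form $w_{L_{U'}}(H\cap\Omega)+i$ together with the lower bound $w_{L_{U'}}(H\cap\Omega)$; it does not assert every intermediate value is attained, so the theorem should be read (and I would state it) as ``$w_{L_U}(\Omega)$ \emph{belongs to} the displayed set,'' which is exactly the containment the proof delivers. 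I would close by noting that the extreme hyperplane realising weight $\frac{m(k-1)}{2}$ is $H$ itself, already handled above, which settles the final sentence.
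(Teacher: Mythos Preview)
Your proposal is correct and follows exactly the same route as the paper: split into the cases $\Omega=H$ (giving weight $\frac{m(k-1)}{2}$ directly, which handles the ``in particular'' clause) and $\Omega\neq H$ (so $\Omega\cap H$ is a hyperplane of $H$, apply Theorem~\ref{th:inter} with $k$ replaced by $k-1$, then feed the result into Proposition~\ref{prop:weightbyintersectioninfinity}); the paper's proof is simply terser, asserting that ``the assertion follows'' from these two ingredients without spelling out the union of intervals. The index discrepancy you flagged is genuine: the argument actually delivers $w_{L_U}(\Omega)\in\{\frac{m(k-1)}{2}\}\cup\{\frac{m(k-3)}{2},\ldots,\frac{m(k-3)}{2}+i+1\}$, and the $\frac{m(k-2)}{2}$ appearing in the displayed statement looks like a typo for $\frac{m(k-3)}{2}$ --- the paper's own proof does not supply any additional step that would account for the shift, and the only subsequent use of the theorem (in the corollary on inequivalence) invokes just the existence of a hyperplane of weight $\frac{m(k-1)}{2}$, which is unaffected.
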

\begin{proof}
Consider a hyperplane $\Omega$ of $\mathrm{PG}(k-1,q^m)$.
We have that either $\Omega=H$ or $\Omega \cap H$ is a hyperplane of $H$.
In the first case, it is easy to see that $\Omega \cap L_{U}=L_{U'}$ and so \[w_{L_U}(\Omega)=\mathrm{Rank}(L_{U'})=\frac{m(k-1)}2.\]
In the second case, consider $\Omega'=\Omega \cap H$. Recall that $L_{U'}$ is a maximum scattered $\F_q$-linear set in $H$. So, by Theorem \ref{th:inter}, \[w_{L_{U'}}(\Omega') \in \left\{  \frac{(k-1)m}{2}-m,  \frac{(k-1)m}{2}-m+1\right\}.\]
The assertion follows now from Proposition \ref{prop:weightbyintersectioninfinity} and by observing that $H$ has weight $\frac{m(k-1)}2$.
\end{proof}

A complete characterization of the possible sizes of the intersections between hyperplanes and linear sets, as described in the above theorem, can be quite intricate. In the following remark, we clarify which intersection sizes are possible.

\begin{remark}
    Let $L_U$ be as in the above theorem.
    Note that, for every hyperplane $\Omega$ of $\mathrm{PG}(k-1,q^m)$, $L_U\cap \Omega$ is either a scattered linear set or an $i$-club, depending on whether the point $P=\la x \ra_{\fqm}$ is in $\Omega$ or not. Therefore, if $w_{L_{U}}(\Omega)=j$ then
    \[|L_U \cap \Omega|= \begin{cases}
        \frac{q^j-1}{q-1} & \text{if } P \notin \Omega,\\
        q^{j-1}+\ldots+q^i+1  & \text{if } P \in \Omega.
    \end{cases}\]
\end{remark}

\subsection{Lifting construction for $k$ odd}

In this section, we present another construction of $i$-club whose rank attains equality in Corollary \ref{cor:boundiclub} for the case $i \geq m/2$.
We will also show that this new construction is not equivalent to the cone construction presented in the previous section, by comparing the weights of the hyperplanes.

Throughout this subsection we assume that $k=2s+1>1$, for some positive integer $s$.

\begin{proposition}[Lifting construction] \label{con:conekodd}
    Let $k$ and $m$ be two positive integers such that $k=2s+1$, for some positive integer $s$.
    Consider
    $$U=\{(x_1+\zeta,x_1^q,x_1^{q^2}, x_2, x_2^{q}, \ldots, x_s,x_s^{q} )\mid x_1,\ldots,x_s\in \mathbb{F}_{q^m}, \zeta\in S\},$$
    for an $\fq$-subspace $S$ of $\fqm$ with dimension $i$. We have that $L_U$ is an $i$-club in $\mathrm{PG}(k-1,q^m)$ of rank $m(k-1)/2+i$.
\end{proposition}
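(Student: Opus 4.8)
The plan is to show that the given $\fq$-subspace $U$ of $\fqm^k$ (with $k=2s+1$) has dimension $m(k-1)/2+i$ and that the linear set $L_U$ has exactly one point of weight larger than one, with that point having weight $i$. First I would compute the dimension: the coordinates $x_1,\ldots,x_s$ range over $\fqm$ and $\zeta$ ranges over the $i$-dimensional subspace $S$, and these $s+1$ parameters enter independently (the map $(x_1,\ldots,x_s,\zeta)\mapsto(x_1+\zeta,x_1^q,x_1^{q^2},x_2,x_2^q,\ldots,x_s,x_s^q)$ is $\fq$-linear and injective, since from the second coordinate one recovers $x_1$, hence $\zeta$ from the first, and $x_j$ from the later coordinates), so $\dim_{\fq}U = sm+i = m(k-1)/2+i$, which matches the claimed rank. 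One should also check $\langle U\rangle_{\fqm}=\fqm^k$, so that $L_U$ spans the whole projective space; this is clear since the standard basis vectors lie in the $\fqm$-span (e.g. taking only $x_1\neq 0$ gives vectors spanning the first three coordinates once we use both $x_1$ and $x_1^q$-type scalings, etc.).

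The main work is analyzing the weights of points. A point $\langle v\rangle_{\fqm}\in L_U$ with $v\in U\setminus\{0\}$ has weight $>1$ iff there exist two $\fqm$-proportional nonzero vectors in $U$, i.e.\ iff $\lambda v\in U$ for some $\lambda\in\fqm\setminus\fq$ (more precisely, $w_{L_U}(\langle v\rangle)=\dim_{\fq}(U\cap\langle v\rangle_{\fqm})$). So I would take $v=(x_1+\zeta,x_1^q,x_1^{q^2},x_2,x_2^q,\ldots,x_s,x_s^q)\in U$ and a scalar $\lambda\in\fqm$, and ask when $\lambda v\in U$, i.e.\ when $\lambda v = (y_1+\eta,y_1^q,y_1^{q^2},y_2,y_2^q,\ldots,y_s,y_s^q)$ for some $y_j\in\fqm$, $\eta\in S$. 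Comparing coordinate pairs $(3,4)$-type and the scattered blocks $(x_j,x_j^q)$ for $j\ge2$: from $\lambda x_j = y_j$ and $\lambda x_j^q = y_j^q = (\lambda x_j)^q = \lambda^q x_j^q$ we get $(\lambda-\lambda^q)x_j^q=0$, so either $\lambda\in\fq$ or $x_j=0$ for all $j\ge 2$. Similarly from the $x_1$-block: $\lambda x_1^q = y_1^q$ and $\lambda x_1^{q^2} = y_1^{q^2} = (\lambda x_1^q)^{q}=\lambda^q x_1^{q^2}$, forcing $\lambda\in\fq$ or $x_1^{q^2}=0$, i.e.\ $x_1=0$. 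Hence if $\lambda\notin\fq$ then $x_1=\cdots=x_s=0$, so $v=(\zeta,0,\ldots,0)$ with $\zeta\in S$, and then $\lambda v=(\lambda\zeta,0,\ldots,0)$ lies in $U$ iff $\lambda\zeta\in S$. This shows the only point that can have weight $>1$ is $P=\langle(1,0,\ldots,0)\rangle_{\fqm}$, and $U\cap\langle(1,0,\ldots,0)\rangle_{\fqm}=\{(\zeta,0,\ldots,0):\zeta\in S\}$ has $\fq$-dimension $i$, so $w_{L_U}(P)=i\ge m/2>1$. Therefore every other point has weight exactly one (weight $0$ is impossible for points of $L_U$), and $L_U$ is an $i$-club of the asserted rank.

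The step I expect to be the main obstacle is the careful coordinate bookkeeping to guarantee that no \emph{other} point acquires weight larger than one — in particular making sure the argument via the repeated Frobenius blocks $(x_j,x_j^q)$ and the special first block $(x_1+\zeta,x_1^q,x_1^{q^2})$ genuinely forces all $x_j=0$ when $\lambda\notin\fq$, rather than only some partial vanishing. A secondary subtlety is handling the case $\lambda\in\fq$ (which just gives $\fq$-multiples, contributing nothing beyond the trivial $\fq$-line through $v$) and confirming that the asserted dimension count is unaffected by the overlap between the $\zeta$-direction and the $x_1$-direction in the first coordinate; this is where injectivity of the defining map, recovered from the second coordinate, is essential. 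Once these are in place, Proposition \ref{prop:size} confirms the size is consistent and the proof is complete.
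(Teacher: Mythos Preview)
Your proof is correct and follows essentially the same route as the paper: you take a nonzero $v\in U$, study the set of $\lambda\in\fqm$ with $\lambda v\in U$, and use the Frobenius-pair coordinates $(x_j,x_j^q)$ (and $(x_1^q,x_1^{q^2})$) to force $\lambda\in\fq$ unless all $x_j$ vanish, leaving $\langle(1,0,\ldots,0)\rangle_{\fqm}$ as the unique point of weight $i$. One small remark: the inequality $i\geq m/2$ you invoke is not part of the hypotheses of this proposition (it is only used afterward to conclude the rank attains the bound), so you should simply assume $i\geq 2$ for the $i$-club statement to be meaningful.
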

\begin{proof}
    Let $v \in U\setminus \{0\}$ and let us determine $\la v \ra_{\fqm}\cap U$. Our aim is to show that if $\dim_{\fq}(\la v \ra_{\fqm}\cap U)>1$ then $\la v \ra_{\fqm}=\la (1,0,\ldots,0)\ra_{\fqm}$.
    Since $v \in U$, we have that there exist  $x_1,\ldots,x_s\in \mathbb{F}_{q^m}, \zeta\in S$ such that
    $v=(x_1+\zeta,x_1^q,x_1^{q^2}, x_2, x_2^{q}, \ldots, x_s,x_s^{q})$. Observe that 
    \[ \dim_{\fq}(\la v \ra_{\fqm}\cap U)=\dim_{\fq}(\{ \rho \in \fqm \colon \rho v \in U \}), \]
    and so let us study this latter subspace.
    Let $\rho \in \fqm$ and assume that 
    \[ \rho (x_1+\zeta,x_1^q,x_1^{q^2}, x_2, x_2^{q}, \ldots, x_s,x_s^{q})=(y_1+\eta,y_1^q,y_1^{q^2}, y_2, y_2^{q}, \ldots, y_s,y_s^{q}), \]
    for some $y_1,\ldots,y_s\in \mathbb{F}_{q^m}, \eta\in S$, from which we immediately derive the following
    \[ 
    \left\{
\begin{array}{llll}
     \rho(x_1+\zeta)=y_1+\eta,  \\
     \rho x_1^q=y_1^q,\\
     \rho x_1^{q^2}=y_1^{q^2},\\
     \rho x_2=y_2,\\
     \vdots\\
     \rho x_s^q=y_s^q.
\end{array}
    \right.
    \]
    Note that if there exists $i \in \{1,\ldots,s\}$ with $i\ne 1$ such that $x_i\ne 0$, then we have that
    \[ \begin{cases}
        \rho x_i=y_1,\\
        \rho x_i^q=y_i^q,
    \end{cases} \]
    implying that $\rho \in \fq$ and so $\dim_{\fq}(\la v \ra_{\fqm}\cap U)=1$.
    If $x_1\ne 0$ then from the above system we derive
    \[ 
    \left\{
\begin{array}{llll}
     \rho x_1^q=y_1^q,\\
     \rho x_1^{q^2}=y_1^{q^2},
\end{array}
    \right.
    \]
    for which we get the same conclusion as before. 
    Therefore, the only case to analyze is those for which all the $x_i$'s are zero, i.e. $v=(\zeta,0,\ldots,0)$, for which we get $\dim_{\fq}(\la v \ra_{\fqm}\cap U)=i$.
\end{proof}

As a consequence, the lifting construction provided in the above result yields examples of $i$-clubs whose rank is $m(k - 1)/2 + i$, thereby achieving equality in Corollary~\ref{cor:boundiclub} for the case where $i \geq m/2$.

\begin{remark}\label{rem:directsumliftconstrkodd}
    Note that $U$ can also be written as the following direct sum
    $$U=\overline{U}_1\oplus U_2\oplus \cdots \oplus U_s,$$
     where $\overline{U}_1=\{(x_1+\zeta,x_1^q,x_1^{q^2})\mid x_1\in\fqm,\zeta \in S\}$ and $U_i:=\{(x_i^q,x_i^{q^2})\mid x_i\in\fqm\}$, for any $i\in \{2,\cdots, s\}$.
     Therefore, we could give a slightly different proof of the above theorem, first observing that $\overline{U}_1$ is an $i$-club and then using Lemma \ref{le:iclubandscatered}.
     Also, one can extend the previous construction by replacing $U_2\oplus \cdots \oplus U_s$ by a maximum scattered subspace contained in the subspace having equations $X_0=X_1=X_2=0$.
\end{remark}

Now, we show that $L_U$, where $L_U$ is as in Proposition \ref{con:conekodd}, is not contained in any hyperplane of $\mathrm{PG}(k-1,q^m)$.

\begin{proposition}
    Let $k$ and $m$ be two positive integers such that $k=2s+1$, for some positive integer $s$.
    Consider
    $$U=\{(x_1+\zeta,x_1^q,x_1^{q^2}, x_2, x_2^{q}, \ldots, x_s,x_s^{q} )\mid x_1,\ldots,x_s\in \mathbb{F}_{q^m}, \zeta\in S\},$$
    for an $\fq$-subspace $S$ of $\fqm$ with dimension $i$. We have that $L_U$ is not contained in any hyperplane of $\mathrm{PG}(k-1,q^m)$.
\end{proposition}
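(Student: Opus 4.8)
The plan is to show that $\langle U \rangle_{\fqm} = \fqm^k$, equivalently that the columns of the natural "generator-type" matrix built from $U$ span $\fqm^k$ over $\fqm$, or even more directly that $U$ is not contained in any $\fqm$-hyperplane $H_a = \{v \in \fqm^k : a \cdot v = 0\}$ for a nonzero $a = (a_0, a_1, \ldots, a_{k-1}) \in \fqm^k$. I would argue by contradiction: suppose $a \cdot v = 0$ for all $v \in U$. Writing a general $v = (x_1 + \zeta, x_1^q, x_1^{q^2}, x_2, x_2^q, \ldots, x_s, x_s^q)$, the linear condition becomes
\[
a_0(x_1 + \zeta) + a_1 x_1^q + a_2 x_1^{q^2} + \sum_{j=2}^{s}\left(a_{2j-1} x_j + a_{2j} x_j^q\right) = 0
\]
for all $x_1, \ldots, x_s \in \fqm$ and all $\zeta \in S$.

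\textbf{Key steps.} First, setting all $x_j = 0$ and letting $\zeta$ range over $S$ forces $a_0 \zeta = 0$ for all $\zeta \in S$; since $S \neq \{0\}$, this gives $a_0 = 0$. Second, with $a_0 = 0$ and all $x_j = 0$ for $j \geq 2$, the remaining identity $a_1 x_1^q + a_2 x_1^{q^2} = 0$ must hold for every $x_1 \in \fqm$; substituting $x_1^q = t$ (which ranges over all of $\fqm$) yields $a_1 t + a_2 t^q = 0$ for all $t$, and comparing this $q$-polynomial identity of degree at most $q$ forces $a_1 = a_2 = 0$ (for instance because a nonzero $\fq$-linearized polynomial of $q$-degree $\leq 1$ has at most $q$ roots, but we would have $q^m > q$ roots, using $m \geq 2$). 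Third, for each fixed $j \in \{2, \ldots, s\}$, isolating the $x_j$-terms by setting all other variables to zero gives $a_{2j-1} x_j + a_{2j} x_j^q = 0$ for all $x_j \in \fqm$, and the same linearized-polynomial argument yields $a_{2j-1} = a_{2j} = 0$. Hence $a = 0$, a contradiction, so no such hyperplane exists.

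\textbf{Main obstacle.} The argument is essentially routine once organized; the only subtlety is the linearized-polynomial step, where one must be careful that the map $x_1 \mapsto x_1^q$ is a bijection of $\fqm$ (so ranging over $x_1$ is the same as ranging over $t = x_1^q$) and that $m \geq 2$ — if $m = 1$ the extension is trivial and the statement would need separate handling, though in that degenerate case an $i$-club with $i \geq m/2 = 1/2$, i.e. $i \geq 1$, forces $i = 1$ and $S = \fq$, and the whole setup collapses to the scattered case. In the intended regime $m \geq 2$ this is not an issue. An alternative, perhaps cleaner, route is to exhibit explicitly $k = 2s+1$ vectors of $U$ whose matrix is upper-triangular (or block upper-triangular) with nonzero diagonal: take $v = (\zeta, 0, \ldots, 0)$ with $\zeta \in S \setminus \{0\}$ for the first coordinate, then $(x_1, x_1^q, x_1^{q^2}, 0, \ldots)$-type vectors with suitable choices of $x_1$ to hit the second and third coordinates, and $(0,\ldots, x_j, x_j^q, \ldots)$ with $x_j = 1$ for the pairs $(2j-1, 2j)$; a short determinant check then shows full rank. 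Either way, the essential content is that $U$ already projects surjectively onto each coordinate block in a way that is "visibly" non-degenerate. I would present the contradiction argument as it is the most transparent.
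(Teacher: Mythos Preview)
Your argument is correct. You proceed by contradiction, assuming a nonzero $a\in\fqm^k$ annihilates all of $U$, and then kill the coordinates of $a$ one block at a time by specialising variables and invoking the fact that a nonzero $q$-linearised polynomial of $q$-degree at most $1$ (resp.\ $2$) over $\fqm$ cannot vanish identically when $m\ge 2$. The paper instead argues constructively: it picks $\xi_1,\xi_2,\xi_3\in\fqm$ that are $\fq$-linearly independent, observes that the Moore-type vectors $(\xi_j,\xi_j^q,\xi_j^{q^2})$ are $\fqm$-linearly independent (and similarly for the pairs $(\xi_j,\xi_j^q)$), and thus exhibits $k$ explicit vectors of $U$ spanning $\fqm^k$. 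This is precisely the ``alternative, perhaps cleaner, route'' you sketch at the end, except that the paper uses Moore vectors rather than the special vector $(\zeta,0,\ldots,0)$ for the first block. Your contradiction approach has the minor advantage of working already for $m=2$ (the paper's choice of three $\fq$-independent $\xi_j$ tacitly needs $m\ge 3$), while the paper's approach is slightly more concrete and avoids the polynomial-identity bookkeeping. Both are standard and essentially equivalent in content: the Moore-determinant fact and the ``linearised polynomial with too few roots'' fact are two sides of the same coin.
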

\begin{proof}
    The assertion is equivalent to the fact that $\la U \ra_{\fqm}=\fqm^k$.
    Observe that, if $\xi_1,\xi_2,\xi_3$ are $\fq$-linearly independent elements in $\fqm$ then
    $(\xi_1,\xi_1^q,\xi_1^{q^2}),(\xi_2,\xi_2^q,\xi_2^{q^2})$ and $(\xi_3,\xi_3^q,\xi_3^{q^2})$ are $\fqm$-linearly independent.
    The same happens if we consider $(\xi_1,\xi_1^q)$ and $(\xi_2,\xi_2^q)$.
    Therefore, the $k$ vectors
    \[ (\xi_1,\xi_1^q,\xi_1^{q^2},0,\ldots,0),(\xi_2,\xi_2^q,\xi_2^{q^2},0,\ldots,0),(\xi_3,\xi_3^q,\xi_3^{q^2},0,\ldots,0),
    (0,0,0,\xi_1,\xi_1^q,0,\ldots,0),\]
    \[(0,0,0,\xi_2,\xi_2^q,0,\ldots,0),\ldots, (0,\ldots,0,\xi_1,\xi_1^q), (0,\ldots,0,\xi_2,\xi_2^q)
    \]
    are in $U$ and are $\fqm$-linearly independent, therefore $\la U \ra_{\fqm}=\fqm^k$.
\end{proof}

Our aim now is to show that the cone construction and the lifting construction are not $\mathrm{\Gamma L}(k,q^m)$-equivalent.
In order to do so, we will study the weights of the hyperplanes on the lifting construction and then compare it with those of the cone construction.
We are going to prove some lower and upper bounds on the weights of the hyperplanes of the lifting construction by duality looking at the weight distribution of the points.
Therefore, we need to compute the dual of the subspace $U$ in Proposition \ref{con:conekodd}, by making use of Remark \ref{rem:directsumliftconstrkodd}.

\begin{lemma}\label{lem:dual}
  Consider
    \[ W=\{(x,x^q)\colon x \in \fqm\}, \]
    \[ \overline{U}=\{(x+\zeta ,x^q,x^{q^2})\colon x \in \fqm, \zeta \in S\}, \]
    where $S$ is an $\fq$-subspace of $\fqm$.
    We have that
    \[ W^{\perp'}=\{ (y,-y^q) \colon y \in \fqm \} \]
    and 
    \[ \overline{U}^{\perp'}=\{ (z,-y^q,-z^{q^2}+y^{q^2}) \colon  y \in \fqm, z \in S^\perp \}. \]
    Also, if $k=2s+1$, for some positive integer $s$, and 
    $$U=\{(x_1+\zeta,x_1^q,x_1^{q^2}, x_2, x_2^{q}, \ldots, x_s,x_s^{q} )\mid x_1,\ldots,x_s\in \mathbb{F}_{q^m}, \zeta\in S\},$$
    for an $\fq$-subspace $S$ of $\fqm$, then
    \[U^{\perp'}=\{(z,-y_1^q,-z^{q^2}+y_1^{q^2},y_2,-y_2^q,\ldots,y_s,-y_s^q) \colon y_1,y_2,\ldots,y_s \in \fqm, z \in S^{\perp^*} \}.\]
\end{lemma}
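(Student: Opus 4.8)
The plan is to compute each dual explicitly by exhibiting the defining bilinear form, checking the containment ``$\subseteq$'' by a direct computation with traces, and then matching $\F_q$-dimensions via (ii) of Proposition \ref{prop:dualityproperties}. Throughout I use the standard non-degenerate bilinear form on $\fqm^t$ given by $\sigma\big((a_1,\ldots,a_t),(b_1,\ldots,b_t)\big)=\sum_{j=1}^t a_jb_j$, so that the associated $\F_q$-form is $\sigma'(a,b)=\mathrm{Tr}_{q^m/q}\big(\sum_j a_jb_j\big)$, and all orthogonal complements $\perp'$ are taken with respect to this form. For $U\subseteq\fqm^k$ with the last $2s-2$ coordinates corresponding to a direct sum of independent scattered blocks, Remark \ref{rem:directsumliftconstrkodd} reduces the computation to the two building blocks $W=\{(x,x^q)\colon x\in\fqm\}$ and $\overline U=\{(x+\zeta,x^q,x^{q^2})\colon x\in\fqm,\zeta\in S\}$: indeed $U^{\perp'}$ is the external direct sum of $\overline U^{\perp'}$ and $s-1$ copies of $W^{\perp'}$, since the $\F_q$-form splits as an orthogonal sum over the coordinate blocks.

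First I would handle $W$. For $(y_0,y_1)\in\fqm^2$, the pairing with a generic $(x,x^q)\in W$ is $\mathrm{Tr}_{q^m/q}(y_0x+y_1x^q)=\mathrm{Tr}_{q^m/q}\big((y_0+y_1^{q^{m-1}})x\big)$ after applying the Frobenius $q^{m-1}$ to the second summand and using $\mathrm{Tr}$-invariance; this vanishes for all $x\in\fqm$ exactly when $y_0=-y_1^{q^{m-1}}$, equivalently $y_0^q=-y_1$, i.e. $(y_0,y_1)=(y,-y^q)$ for $y=y_0$. So $W^{\perp'}\supseteq\{(y,-y^q):y\in\fqm\}$, and since the right side has $\F_q$-dimension $m=2m-m=\dim_{\F_q}(\fqm^2)-\dim_{\F_q}(W)$, equality holds. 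For $\overline U$: pairing $(z_0,z_1,z_2)$ with $(x+\zeta,x^q,x^{q^2})$ gives $\mathrm{Tr}_{q^m/q}\big(z_0(x+\zeta)+z_1x^q+z_2x^{q^2}\big)=\mathrm{Tr}_{q^m/q}\big((z_0+z_1^{q^{m-1}}+z_2^{q^{m-2}})x\big)+\mathrm{Tr}_{q^m/q}(z_0\zeta)$. Requiring this to vanish for all $x\in\fqm$ and all $\zeta\in S$ forces $\mathrm{Tr}_{q^m/q}(z_0\zeta)=0$ for all $\zeta\in S$, i.e. $z_0\in S^{\perp}$, together with $z_0+z_1^{q^{m-1}}+z_2^{q^{m-2}}=0$. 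Setting $z:=z_0\in S^\perp$ and $y:=-z_1^{q^{m-1}}$ (so $z_1=-y^q$), the relation reads $z_2^{q^{m-2}}=-z_0-z_1^{q^{m-1}}=-z+y$, hence $z_2=(-z+y)^{q^2}=-z^{q^2}+y^{q^2}$, which is exactly the claimed parametrization $(z,-y^q,-z^{q^2}+y^{q^2})$. Conversely every such triple is easily checked to lie in $\overline U^{\perp'}$, and the parametrization by $(y,z)\in\fqm\times S^\perp$ is injective, so $\dim_{\F_q}\overline U^{\perp'}=m+\dim_{\F_q}S=3m-(m+\dim_{\F_q}S^{\perp})\cdot(\ldots)$ — more directly, $\dim_{\F_q}\overline U=m+\dim_{\F_q}S$ forces $\dim_{\F_q}\overline U^{\perp'}=3m-(m+\dim_{\F_q}S)=2m-\dim_{\F_q}S=m+\dim_{\F_q}S^\perp$, matching; this pins down $\overline U^{\perp'}$ exactly.

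Finally I would assemble the full statement for $U$ with $k=2s+1$. By the orthogonal splitting of $\sigma'$ across the coordinate block decomposition in Remark \ref{rem:directsumliftconstrkodd}, $U^{\perp'}=\overline U^{\perp'}\oplus W^{\perp'}\oplus\cdots\oplus W^{\perp'}$ ($s-1$ copies), which when written out coordinatewise is precisely $\{(z,-y_1^q,-z^{q^2}+y_1^{q^2},y_2,-y_2^q,\ldots,y_s,-y_s^q):y_1,\ldots,y_s\in\fqm,\ z\in S^{\perp}\}$. The one point of care is notational: the duality on $\fqm^k$ restricted to the block complementing the first three coordinates is the induced one, which in the excerpt is written $\perp^*$; since $U''=\{(\zeta,0,0):\zeta\in S\}$ sits inside a coordinate line, the relevant complement of $S$ is $S^{\perp^*}$, so the displayed answer $z\in S^{\perp^*}$ is the correct bookkeeping. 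I expect the only real obstacle to be purely clerical: keeping the Frobenius exponents and signs straight when transferring terms across the trace (the pattern $\mathrm{Tr}(ax^{q^j})=\mathrm{Tr}(a^{q^{m-j}}x)$), and making sure the final ``$\subseteq$'' plus dimension count is stated cleanly rather than re-deriving everything twice.
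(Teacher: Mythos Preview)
Your proposal is correct and follows essentially the same strategy as the paper: exhibit the candidate subspace, verify the containment in the $\perp'$-dual, and match $\F_q$-dimensions via (ii) of Proposition~\ref{prop:dualityproperties}. The paper's proof is much terser---it simply names the candidates $W'$ and $\overline{U}'$, asserts the containments and dimensions are ``easy to show,'' and stops---whereas you actually carry out the trace computations that derive the shape of the duals and explicitly justify the block-orthogonal assembly for the full $U$ (a step the paper leaves entirely implicit). The garbled dimension line you abandon mid-sentence should be cleaned up, but the ``more directly'' clause that follows it is the right count.
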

\begin{proof}
    Let 
    \[W'=\{ (y,-y^q) \colon y \in \fqm \}, \]
    and 
    \[\overline{U}'=\{ (z,-y^q,-z^{q^2}+y^{q^2}) \colon y \in \fqm , z \in S^{\perp^*} \}\]
    It is easy to show that $W'\subseteq W^{\perp'}, \ \overline{U}'\subseteq \overline{U}^{\perp'},$ $\dim_{\fq}(W')=m$ and $\dim_{\fq}(\overline{U}')=2m-\dim_{\fq}(S)$.
    Hence, the statement follows.
\end{proof}

We are now ready to provide bounds on the weights of the hyperplane with respect to the $L_U$ constructed in Proposition \ref{con:conekodd}.

\begin{theorem}\label{thm:conekoddhyper}
    Let $k$ and $m$ be two positive integers such that $k=2s+1$, for some positive integer $s$.
    Consider
    $$U=\{(x_1+\zeta,x_1^q,x_1^{q^2}, x_2, x_2^{q}, \ldots, x_s,x_s^{q} )\mid x_1,\ldots,x_s\in \mathbb{F}_{q^m}, \zeta\in S\},$$
    where $S$ is an $\fq$-subspace of $\fqm$ with dimension $i$. We have that 
    \[ w_{L_U}(H) \in \{m(k-3)/2+i,m(k-3)/2+i+1,m(k-3)/2+i+2\}, \]
    for every hyperplane $H$ in $\mathrm{PG}(k-1,q^m)$.
\end{theorem}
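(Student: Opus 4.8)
The plan is to work with the rank-metric code $\C$ associated with the system $U^{\perp'}$, whose explicit description is given by Lemma \ref{lem:dual}, namely
\[
U^{\perp'}=\{(z,-y_1^q,-z^{q^2}+y_1^{q^2},y_2,-y_2^q,\ldots,y_s,-y_s^q) \colon y_1,\ldots,y_s \in \fqm,\ z \in S^{\perp^*}\}.
\]
By the duality formula (v) of Proposition \ref{prop:dualityproperties}, the weight of a hyperplane $H$ with respect to $L_U$ is linearly related to the weight with respect to $L_{U^{\perp'}}$ of the line $H^{\perp^*}$; more precisely, since $\dim_{\fq}(U)=m(k-1)/2+i$ and $\dim_{\fq}(U^{\perp'})=km-\dim_{\fq}(U)=m(k+1)/2-i$, equation \eqref{eq:dualweight} (applied to hyperplanes $H$ of $\fqm^k$, i.e.\ $\fqm$-subspaces of dimension $k-1$) gives
\[
\dim_{\fq}(U^{\perp'}\cap H^{\perp^*})-\dim_{\fq}(U\cap H)=km-\dim_{\fq}(U)-m(k-1)=\tfrac{m(1-k)}{2}+i,
\]
so bounding $w_{L_U}(H)=\dim_{\fq}(U\cap H)$ between two consecutive-in-$\{m(k-3)/2+i,\dots,+2\}$ values is equivalent to bounding $\dim_{\fq}(U^{\perp'}\cap H^{\perp^*})$ (equivalently, by Theorem \ref{th:connection}, the rank weight of the corresponding codeword of $\C$) in a window of length $2$. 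Thus the theorem will follow once I show that every nonzero codeword of $\C$ has rank weight in $\{m-2,m-1,m\}$.

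The main step, then, is a direct rank computation on the generic element $c(z,y_1,\dots,y_s)=(z,-y_1^q,-z^{q^2}+y_1^{q^2},y_2,-y_2^q,\dots,y_s,-y_s^q)$ of $U^{\perp'}$: its rank weight is the $\fq$-dimension of the $\fq$-span of its $k$ coordinates in $\fqm$. I would argue as follows. First, using Remark \ref{rem:directsumliftconstrkodd} and the corresponding direct-sum decomposition of the dual, write $U^{\perp'}=\overline{U}_1^{\perp^*}\oplus U_2^{\perp^*}\oplus\cdots\oplus U_s^{\perp^*}$ where $U_j^{\perp^*}=\{(y_j,-y_j^q)\}$ is a (maximum) scattered $\fq$-subspace of the corresponding $2$-dimensional block. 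For each block $j\ge 2$, the pair $(y_j,-y_j^q)$ spans an $\fq$-space of dimension $m$ if $y_j\ne 0$ (since $y_j,y_j^q$ are $\fq$-independent unless $y_j\in\fq$, but even then the pair $(y_j,-y_j)$ spans dimension $1$; more carefully, the rank of $(y_j,-y_j^q)$ equals $\dim_{\fq}\langle y_j,y_j^q\rangle_{\fq}$, which is $m$ for a generic $y_j$ and at least $1$ always). The point is that the rank of $c$ is the dimension of the sum of the $\fq$-spans of the individual blocks, and since each block contributes an $\fq$-subspace and the total ambient space is $\fqm$ of dimension $m$, the rank of $c$ is automatically at most $m$. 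For the lower bound $m-2$, I would show that the $\fq$-span of the three coordinates $z,-y_1^q,-z^{q^2}+y_1^{q^2}$ coming from the $\overline{U}_1$-block already has $\fq$-dimension at least $m-2$ whenever $(z,y_1)\ne(0,0)$: indeed $\langle z,y_1^q,z^{q^2}-y_1^{q^2}\rangle_{\fq}\supseteq \langle z,y_1^q,(z-y_1)^{q^2}\rangle_{\fq}$ (after subtracting/combining), and a short argument on linearized polynomials shows that a nonzero element together with its appropriate Frobenius images spans a space of codimension at most $2$; the only way to drop below is when $z$ and $y_1$ are very special, which forces $c=0$ or is absorbed. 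Then, since the other blocks only add to this span, $w(c)\ge m-2$ for all nonzero $c$, and combined with $w(c)\le m$ this gives the three allowed weights.

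The hard part will be pinning down the lower bound $m-2$ precisely — that is, verifying that for \emph{every} nonzero choice of parameters the $\fq$-span of the coordinates has codimension at most $2$ in $\fqm$, and that the value $m-2$ is actually attained (so that the window is exactly length $3$ and not smaller). This requires a careful case analysis on the vanishing of $z$, $y_1$ and the relations among $z$, $y_1$, and their Frobenius twists $z^{q^2}$, $y_1^{q^2}$; the block $\overline{U}_1$ is where all the subtlety lives, since the blocks $U_j$ for $j\ge 2$ are scattered and behave transparently. Once the weight distribution $\{m-2,m-1,m\}$ of $\C$ is established, translating back via the duality identity above yields $w_{L_U}(H)\in\{m(k-3)/2+i,m(k-3)/2+i+1,m(k-3)/2+i+2\}$, completing the proof.
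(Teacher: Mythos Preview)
Your duality setup is the right opening move: by Proposition~\ref{prop:dualityproperties}(v) one obtains
\[
w_{L_U}(H)=w_{L_{U^{\perp'}}}(H^{\perp})+\tfrac{m(k-3)}{2}+i
\]
(your displayed constant $\tfrac{m(1-k)}{2}+i$ is an arithmetic slip, and $H^{\perp}$ is a \emph{point}, not a line), so the task reduces to showing $w_{L_{U^{\perp'}}}(P)\in\{0,1,2\}$ for every point $P$.

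The main step of your plan, however, rests on a conflation of three different quantities and does not go through. The number $w_{L_{U^{\perp'}}}(\langle c\rangle_{\fqm})=\dim_{\fq}(U^{\perp'}\cap\langle c\rangle_{\fqm})$ that you need is \emph{not} the rank weight $\dim_{\fq}\langle c_1,\ldots,c_k\rangle_{\fq}$ of an element $c\in U^{\perp'}\subset\fqm^k$, and it is \emph{not} the weight of a codeword of the $\fqm$-linear code $\C$ with system $U^{\perp'}$ either: by Theorem~\ref{th:connection}, codeword weights of $\C$ are $N-\dim_{\fq}(U^{\perp'}\cap H')$ for \emph{hyperplanes} $H'$, which (after another duality) record the point weights of $L_U$, not of $L_{U^{\perp'}}$. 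Your concrete claim that $\dim_{\fq}\langle z,-y_1^q,-z^{q^2}+y_1^{q^2}\rangle_{\fq}\ge m-2$ whenever $(z,y_1)\ne(0,0)$ is simply false: take $z=0$ and $y_1\in\fq^*$, so that $c=(0,-y_1,y_1,0,\ldots,0)\in U^{\perp'}$ has rank weight~$1$. Hence the block-by-block span computation does not bound the quantity that matters.

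What the paper does instead is bound the point weight directly, via $w_{L_{U^{\perp'}}}(\langle v\rangle_{\fqm})=\dim_{\fq}\{\rho\in\fqm:\rho v\in U^{\perp'}\}$. If some $y_j$ with $j\ge 2$ is nonzero, the scattered block $(y_j,-y_j^q)$ forces $\rho\in\fq$. Otherwise, writing out the first three coordinates yields a single nonzero linearized equation
\[
-\rho^{q^2}z^{q^2}+\rho^{q}y_1^{q^2}+\rho(z^{q^2}-y_1^{q^2})=0
\]
of $q$-degree at most $2$, so at most $q^2$ solutions and point weight $\le 2$. That short case split is what actually closes the argument.
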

\begin{proof}
    Let us start by considering the dual of $U$, by Lemma \ref{lem:dual} it is equal to
    \[U^{\perp'}=\{(z,-y_1^q,-z^{q^2}+y_1^{q^2},y_2,-y_2^q,\ldots,y_s,-y_s^q) \colon  y_1,y_2,\ldots,y_s \in \fqm, z_1 \in S^{\perp'}\},\]
    and note that $\dim_{\fq}(U^{\perp'})=m(k-1)/2+m-i$.
    Observe that by \eqref{eq:dualweight} we have 
    \begin{equation}\label{eq:relconeconstrkodd}
    w_{L_U}(H)=w_{L_{U^{\perp'}}}(H^\perp)+m(k-3)/2+i.
    \end{equation}
    Therefore, we can study weight distribution of the points of $L_{U^{\perp'}}$ to get the weight distribution of the hyperplanes with respect to $L_U$.
    Let 
    \[ \la (z,-y_1^q,-z^{q^2}+y_1^{q^2},y_2,-y_2^q,\ldots,y_s,-y_s^q)  \ra_{\fqm}\in L_{U^{\perp'}}, \]
    for some $z \in S^{\perp'}$ and $y_1,y_2,\ldots,y_s \in \fqm$. As in the proof of Proposition \ref{con:conekodd}, we can find the weight of this point by finding the dimension of the subspace of $\fqm$ given by those $\rho \in \fqm$ such that
    \[ \rho(z,-y_1^q,-z^{q^2}+y_1^{q^2},y_2,-y_2^q,\ldots,y_s,-y_s^q) \in U^{\perp'}. \]
    If one of the $y_i$'s, with $i \in \{2,\ldots,s\}$ is nonzero, then we immediately get $\rho \in \fq$.
    Therefore, let us assume that $y_2=\ldots=y_s=0$ and so we have
    \[\rho(z,-y_1^q,-z^{q^2}+y_1^{q^2})=(r_1,-r_2^q,-r_1^{q^2}+r_2^{q^2}),\]
    for some $r_1 \in S^{\perp'}$ and $r_2 \in \fqm$. From this, we derive that $r_1=\rho z$, $r_2=\rho^{q^{m-1}}y_1$ and so
    \[\rho(-z^{q^2}+y_1^{q^2})=-(\rho z)^{q^2}+(\rho^{q^{m-1}}y_1)^{q^2},\]
    which can be also rewritten as follows
    \begin{equation}\label{eq:conconstrrho} 
    -\rho^{q^2}z^{q^2}+\rho^q y_1^{q^2}-\rho(-z^{q^2}+y_1^{q^2})=0.
    \end{equation}
    Since $z$ and $y_1$ cannot both be equal to zero, for fixed values of $z$ and $y_1$ there are at most $q^2$ value for $\rho$ satisfying \eqref{eq:conconstrrho}.
    Hence, \[
    w_{L_{U^{\perp'}}}(\la (z,-y_1^q,-z^{q^2}+y_1^{q^2},y_2,-y_2^q,\ldots,y_s,-y_s^q)  \ra_{\fqm})\leq 2.
    \]
    As a consequence, by \eqref{eq:relconeconstrkodd}
    \[m(k-3)/2+i\leq w_{L_U}(H)\leq m(k-3)/2+i+2,\]
    and so we have the assertion.
\end{proof}

As a consequence, we can derive the following non-equivalence of the constructions presented in Propositions \ref{prop:coneconstruct} and \ref{con:conekodd} for the case $i\leq m-3$.

\begin{corollary}
    Let $k$ and $m\geq 5$ be two positive integers such that $k=2s+1$, for some positive integer $s$.
    Let $H=\PG(W,\fqm)$ be a hyperplane of $\mathrm{PG}(k-1,q^m)$. Let $L_{U'}$ be a maximum scattered $\fq$-linear set in $H$.
    Let $x\in \mathbb{F}_{q^m}^k$, such that $\langle x\rangle_{\mathbb{F}_{q^m}}\cap  W= \{0\}$. Let $U''$ be an $i$-dimensional $\fq$-subspace of $\langle x\rangle_{\mathbb{F}_{q^m}}$, for some positive integer $i\geq m/2$, define 
    $U=U'+U''$.
    Consider
    $$\overline{U}=\{(x_1+\zeta,x_1^q,x_1^{q^2}, x_2, x_2^{q}, \ldots, x_s,x_s^{q} )\mid x_1,\ldots,x_s\in \mathbb{F}_{q^m}, \zeta\in S\},$$
    where $S$ is an $\fq$-subspace of $\fqm$ with dimension $i$.
    If $i\leq m-3$, then $U$ and $\overline{U}$ are $\mathrm{\Gamma L}(k,q^m)$-inequivalent.
\end{corollary}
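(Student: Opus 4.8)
The plan is to distinguish the two constructions by comparing the spectrum of hyperplane weights provided by Theorems~\ref{th:coneconstruct} and~\ref{thm:conekoddhyper}. For the cone construction $U=U'+U''$, Theorem~\ref{th:coneconstruct} guarantees that there exists a hyperplane $\Omega$ with $w_{L_U}(\Omega)=\frac{m(k-1)}{2}$. For the lifting construction $\overline U$, Theorem~\ref{thm:conekoddhyper} tells us that \emph{every} hyperplane $H$ satisfies
\[
w_{L_{\overline U}}(H)\le \frac{m(k-3)}{2}+i+2.
\]
So the first step is simply to observe that if $U$ and $\overline U$ were $\mathrm{\Gamma L}(k,q^m)$-equivalent, then the multiset of hyperplane weights would coincide (semilinear equivalence of subspaces preserves incidence and the $\fq$-dimensions of intersections, hence carries hyperplanes to hyperplanes and preserves weights, as recalled at the end of Section~\ref{sec:prel}). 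In particular $\overline U$ would admit a hyperplane of weight $\frac{m(k-1)}{2}$.

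The second step is the inequality chase: we must check that $\frac{m(k-1)}{2} > \frac{m(k-3)}{2}+i+2$ precisely under the hypothesis $i\le m-3$ (and the standing assumption $m\ge 5$, $k=2s+1$). Indeed
\[
\frac{m(k-1)}{2}-\left(\frac{m(k-3)}{2}+i+2\right)=m-i-2,
\]
which is strictly positive exactly when $i\le m-3$. Thus no hyperplane of $L_{\overline U}$ can have weight as large as $\frac{m(k-1)}{2}$, while $L_U$ has one; this contradicts the equivalence, completing the argument.

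The only real subtlety — and the step I would write most carefully — is the passage from $\mathrm{\Gamma L}(k,q^m)$-equivalence of the subspaces to equality of the hyperplane-weight multisets. An element of $\mathrm{\Gamma L}(k,q^m)$ is an invertible $\fqm$-semilinear map $\varphi$; it sends $\fqm$-subspaces to $\fqm$-subspaces of the same $\fqm$-dimension (so hyperplanes to hyperplanes), and for an $\fq$-subspace $U$ and an $\fqm$-hyperplane $W$ one has $\dim_{\fq}(\varphi(U)\cap\varphi(W))=\dim_{\fq}(U\cap W)$, because $\varphi$ is in particular $\fq$-semilinear and injective. Hence if $\overline U=\varphi(U)$ then $w_{L_{\overline U}}(\varphi(H))=w_{L_U}(H)$ for every hyperplane $H$, so the two weight multisets agree. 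I would state this as a one-line observation (or cite the discussion after Theorem~\ref{th:connection}) rather than belabour it. Everything else is the elementary computation above, so the proof is short: assume equivalence, transport the weight-$\frac{m(k-1)}{2}$ hyperplane of $L_U$ to $L_{\overline U}$, and contradict the upper bound $\frac{m(k-3)}{2}+i+2 < \frac{m(k-1)}{2}$ valid when $i\le m-3$.
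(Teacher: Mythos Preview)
Your proposal is correct and follows essentially the same argument as the paper: assume equivalence, note that the two subspaces would then share the same multiset of hyperplane weights, invoke Theorem~\ref{th:coneconstruct} to obtain a hyperplane of weight $\frac{m(k-1)}{2}$ for $L_U$, and contradict the bound $\frac{m(k-3)}{2}+i+2$ from Theorem~\ref{thm:conekoddhyper} using $m-i-2>0$ when $i\le m-3$. Your explicit justification that a $\mathrm{\Gamma L}(k,q^m)$-map preserves hyperplane weights is a welcome detail that the paper leaves implicit.
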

\begin{proof}
    Suppose that $U$ and $\overline{U}$ are $\mathrm{\Gamma L}(k,q^m)$-equivalent, we have that 
    \begin{equation}\label{eq:weightset} \{w_{L_{U}}(\Omega) \colon \Omega \text{ hyperplane of } \mathrm{PG}(k-1,q^m)\}=\{w_{L_{\overline{U}}}(\Omega) \colon \Omega \text{ hyperplane of } \mathrm{PG}(k-1,q^m)\} \end{equation}
    By Theorem \ref{th:coneconstruct}, there exists an hyperplane $\Omega$ in $\mathrm{PG}(k-1,q^m)$ with weight $m(k-1)/2$ with respect to $L_U$. Since the possible weights for $\Omega$ with respect to $L_{\overline{U}}$ are $m(k-3)/2+i, m(k-3)/2+i+1$ and $m(k-3)/2+i+2$, by Theorem \ref{thm:conekoddhyper}, we have a contradiction to \eqref{eq:weightset}. 
\end{proof}

When $i=m-2$, we can prove that in some cases we have non-equivalent examples. Indeed, we show that for a class of examples obtained from the cone construction (with $i=m-2$) cannot contains $(m-2)$-clubs equivalent to the construction in Proposition \ref{con:conekodd}.

\begin{corollary}
    Let $k$ and $m$ be two positive integers such that $k=2s+1$, for some positive integer $s$.
    Consider
    $$U=\{(\eta,x_1,x_1^{q}, x_2, x_2^{q}, \ldots, x_s,x_s^{q} )\mid x_1,\ldots,x_s\in \mathbb{F}_{q^m}, \eta\in S\},$$
    and
    $$\overline{U}=\{(x_1+\zeta,x_1^q,x_1^{q^2}, x_2, x_2^{q}, \ldots, x_s,x_s^{q} )\mid x_1,\ldots,x_s\in \mathbb{F}_{q^m}, \zeta\in T\},$$
    where $S$ and $T$ are $\fq$-subspaces of $\fqm$ with dimension $m-2$ such that there exist no $\alpha \in \fqn$ and $\sigma \in \mathrm{Aut}(\fqm)$ such that $S=\alpha T^\sigma$.
    We have that $U$ and $\overline{U}$ are $\mathrm{\Gamma L}(k,q^m)$-inequivalent.
\end{corollary}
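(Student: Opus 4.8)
The plan is to abandon the hyperplane–weight comparison used in the preceding corollary, which cannot work here since for $i=m-2$ the two weight spectra overlap, and instead to argue directly at the point of maximal weight. The point is that $L_U$ and $L_{\overline U}$ are both $(m-2)$-clubs sharing the very same heavy point $P=\langle e_0\rangle=\langle(1,0,\dots,0)\rangle_{\mathbb F_{q^m}}$, but with respective weight-$(m-2)$ subspaces $S$ and $T$; a $\Gamma\mathrm L(k,q^m)$-equivalence would then have to conjugate $S$ onto $T$, which is exactly what the hypothesis rules out.

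First I would pin down the local structure at $P$. From the description of $U$ one reads $U\cap\langle e_0\rangle_{\mathbb F_{q^m}}=Se_0$, and $U=U'\oplus Se_0$ with $U'=\{(0,x_1,x_1^q,\dots,x_s,x_s^q)\mid x_j\in\mathbb F_{q^m}\}$ the standard maximum scattered $\mathbb F_q$-subspace of the hyperplane $X_0=0$; hence Lemma~\ref{le:iclubandscatered}, applied to the (degenerate) $(m-2)$-club $Se_0$ supported at $P$ together with the scattered $U'$, shows that $L_U$ is an $(m-2)$-club whose only point of weight exceeding one is $P$, of weight $\dim_{\mathbb F_q}S=m-2$. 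Symmetrically $\overline U\cap\langle e_0\rangle_{\mathbb F_{q^m}}=Te_0$, and the computation in the proof of Proposition~\ref{con:conekodd} (verbatim with $T$ in place of $S$) shows $L_{\overline U}$ is an $(m-2)$-club whose only heavy point is again $P$, of weight $\dim_{\mathbb F_q}T=m-2$.

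Then I would run the rigidity step. Assume $\varphi\in\Gamma\mathrm L(k,q^m)$ satisfies $\varphi(U)=\overline U$ and let $\sigma\in\mathrm{Aut}(\mathbb F_{q^m})$ be its companion automorphism. As $\sigma$ preserves the unique subfield $\mathbb F_q$, the map $\varphi$ takes $\mathbb F_q$-subspaces to $\mathbb F_q$-subspaces of equal dimension and $\mathbb F_{q^m}$-subspaces to $\mathbb F_{q^m}$-subspaces, so the induced collineation sends $L_U$ to $L_{\overline U}$ and preserves the weight of every point. Consequently $\varphi$ maps the unique weight-$(m-2)$ point of $L_U$ to that of $L_{\overline U}$, that is, it stabilises the line $\langle e_0\rangle_{\mathbb F_{q^m}}$; writing $\varphi(e_0)=\alpha e_0$ with $\alpha\in\mathbb F_{q^m}^\ast$ gives $\varphi(\eta e_0)=\alpha\eta^{\sigma}e_0$, and applying $\varphi$ to $Se_0=U\cap\langle e_0\rangle_{\mathbb F_{q^m}}$ while using $\varphi(U)=\overline U$ yields $\alpha S^{\sigma}=T$. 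Rewriting as $S=\beta T^{\tau}$ with $\tau=\sigma^{-1}$ and $\beta=(\alpha^{\sigma^{-1}})^{-1}\in\mathbb F_{q^m}^\ast$ contradicts the hypothesis; hence $U$ and $\overline U$ are $\Gamma\mathrm L(k,q^m)$-inequivalent.

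The only place where real care is needed is the first step: confirming that each construction is a genuine $(m-2)$-club with a single heavy point located precisely at $\langle e_0\rangle$ and with concentrated subspace exactly $S$, respectively $T$. Once that is secured the rest is formal — the unique maximal-weight point is a $\Gamma\mathrm L$-invariant, a semilinear collineation fixing a projective point acts on the underlying $1$-dimensional $\mathbb F_{q^m}$-space by $\eta\mapsto\alpha\eta^{\sigma}$, and transporting the weight subspace produces the forbidden relation $S=\beta T^{\tau}$. (This argument uses $i=m-2$ only through $i>1$; it is phrased for $i=m-2$ because for $i\le m-3$ the stronger unconditional inequivalence was obtained in the previous corollary, and $i=m-1$ is covered by the classification in Section~\ref{sec:classm-1club}.)
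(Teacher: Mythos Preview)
Your proof is correct and follows essentially the same approach as the paper: both arguments observe that the unique point of weight $m-2$ in each linear set is $\langle e_0\rangle_{\mathbb F_{q^m}}$, that any $\Gamma\mathrm L(k,q^m)$-equivalence must fix this point, and that restricting the equivalence to $\langle e_0\rangle_{\mathbb F_{q^m}}$ forces a relation $S=\beta T^{\tau}$ (equivalently $A_{1,1}T^{\sigma}=S$ in the paper's notation), contradicting the hypothesis. Your write-up is slightly more explicit in justifying that each construction is indeed an $(m-2)$-club with heavy point at $\langle e_0\rangle$, but the core argument is identical.
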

\begin{proof}
    By contradiction, assume that $U$ and $\overline{U}$ are $\mathrm{\Gamma L}(k,q^m)$-equivalent. Therefore, there exist a matrix $A \in \mathrm{GL}(k,q^m)$ and $\sigma \in \mathrm{Aut}(\fqm)$ such that for every $u \in \overline{U}$ there exists $v \in U$ such that
    \[ A (u^\sigma)^T=v. \]
    Denote by $A_{i,j}$ the $(i,j)$-th entry of $A$. 
    We have that for every $x_1,\ldots,x_s\in \mathbb{F}_{q^m}, \zeta\in T$ there exist $y_1,\ldots,y_s\in \mathbb{F}_{q^m}, \eta\in S$ such that
    \[
    \left\{
    \begin{array}{llll}
    \eta =A_{1,1}(x_1^\sigma+\zeta^\sigma)+A_{1,2}(x_1^\sigma)^q+\ldots+A_{1,k}(x_s^\sigma)^q,\\
    y_1=A_{2,1}(x_1^\sigma+\zeta^\sigma)+A_{2,2}(x_1^\sigma)^q+\ldots+A_{2,k}(x_s^\sigma)^q,\\
    y_1^q=A_{3,1}(x_1^\sigma+\zeta^\sigma)+A_{3,2}(x_1^\sigma)^q+\ldots+A_{3,k}(x_s^\sigma)^q,\\
    \vdots\\
    y_s=A_{2s,1}(x_1^\sigma+\zeta^\sigma)+A_{2s,2}(x_1^\sigma)^q+\ldots+A_{2s,k}(x_s^\sigma)^q,\\
    y_1^q=A_{2s+1,1}(x_1^\sigma+\zeta^\sigma)+A_{2s+1,2}(x_1^\sigma)^q+\ldots+A_{2s+1,k}(x_s^\sigma)^q.
    \end{array}
    \right.
    \]
    In particular, since $\la (1,0,\ldots,0)\ra_{\fqm}$ is the only point of weight greater than one for both the linear sets $L_U$ and $L_{\overline{U}}$, it has to be fixed by the action of $A$ and $\sigma$. Hence, we have that $A_{1,1}\neq 0$ and for every $\zeta\in T$ there exists $\eta\in S$ such that 
    \[ A_{1,1} \zeta^\sigma=\eta,\]
    which implies that $A_{1,1}T^\sigma=S$, a contradiction.
\end{proof}

\begin{remark}
    Note that the existence of $S$ and $T$ with the property that there exist no $\alpha \in \fqm$ and $\sigma \in \mathrm{Aut}(\fqm)$ such that $S=\alpha T^\sigma$ is guaranteed from the following fact. 
    This condition is preserved by considering their duals, i.e. $S=\alpha T^\sigma$ if and only if \begin{equation}\label{eq:SperpTperp}
    S^\perp=\alpha^{-1}(T^\perp)^\sigma.
    \end{equation}
    If $n$ is even and we choose $S^\perp=\F_{q^2}$ and $T^\perp=\la 1,\lambda\ra_{\fq}$ with $\lambda \in \F_{q^2}\setminus \fq$, since $\F_{q^2}$ and $\langle 1, \lambda \rangle_{\fq}$ cannot be obtained one from the other as in \eqref{eq:SperpTperp}.
\end{remark}

\begin{remark}
    As you have seen, we did not consider the case in which $i=\in \{m-1,m\}$.
    Indeed, in Section \ref{sec:classm-1club}, we will characterize the $(m-1)$-clubs having maximum rank. As a consequence, the two constructions presented in this section, when $i=m-1$, are $\Gamma\mathrm{L}(k,q^m)$-equivalent.
    Clearly, the same happens when $i=m$ as all the constructions of $m$-clubs of maximum rank are of the cone construction form.
\end{remark}

\subsection{Constructions for $k$ even}
In this section, we will give a construction of $i$-clubs of maximum rank in $\mathrm{PG}(k-1,q^m)$ when $k$ is even. 
When $k\geq 6$ we can use a construction very similar to that of Proposition \ref{con:conekodd} and following the same strategy when $k>4$ we can prove the following.

\begin{theorem} \label{th:inequivalencekeven}
Let $k \geq 6$ and $m$ be two positive integers such that $k$ and $m$ are even numbers.
    Consider
    \[ U_1=\{(x+\zeta,x^q,x^{q^2})\mid x\in \mathbb{F}_{q^m}, \zeta\in S\}, \]
    for an $\fq$-subspace $S$ of $\fqm$ with dimension $i$, and let $L_{U_2}$ be a maximum scattered $\fq$-linear set in the subspace of $\mathrm{PG}(k-1,q^m)$ having equations $X_0=X_1=X_2=0$.
    The subspace $U=U_1+U_2$ defines an $i$-club in $\mathrm{PG}(k-1,q^m)$ of rank $m(k-1)/2+i$.
    Moreover, if $i\leq m-3$ then $L_U$ is not $\mathrm{\Gamma L}(k,q^m)$-equivalent to an $i$-club of the form as in Proposition \ref{prop:coneconstruct}.
\end{theorem}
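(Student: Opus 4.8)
The plan is to follow the two–step strategy already used for $k$ odd: first recognise $L_U$ as an $i$-club of the claimed rank by a direct–sum argument, and then separate it from every cone construction by computing the set of weights of hyperplanes, which I reach through duality.

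For the first part I would write $T_1=\PG(\langle e_0,e_1,e_2\rangle_{\fqm},\fqm)$ and $T_2=\PG(\langle e_3,\ldots,e_{k-1}\rangle_{\fqm},\fqm)$, so that $T_1\oplus T_2=\PG(k-1,q^m)$, the subspace $U_1$ lies in $\langle e_0,e_1,e_2\rangle_{\fqm}$, and $L_{U_2}\subseteq T_2$. Here $U_1$ is exactly the subspace $\overline{U}_1$ of Remark \ref{rem:directsumliftconstrkodd}, i.e.\ the instance $s=1$ of Proposition \ref{con:conekodd}; hence $L_{U_1}$ is an $i$-club of rank $m+i$ in $T_1\cong\PG(2,q^m)$, whose unique point of weight $>1$ is $\langle e_0\rangle_{\fqm}$ (of weight $i$). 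Since $k$ and $m$ are even, $(k-3)m$ is even, so Theorem \ref{th:existencescattered} provides the maximum scattered $L_{U_2}$ of rank $m(k-3)/2$ in $T_2\cong\PG(k-4,q^m)$. Then Lemma \ref{le:iclubandscatered} applies and gives that $L_U$ is an $i$-club of rank $(m+i)+m(k-3)/2=m(k-1)/2+i$; in particular, for $i\ge m/2$ this attains the bound of Corollary \ref{cor:boundiclub}.

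For the non-equivalence, observe that a $\mathrm{\Gamma L}(k,q^m)$-equivalence of the two subspaces induces a collineation of $\PG(k-1,q^m)$ carrying one linear set onto the other and preserving the weight of every hyperplane, so it suffices to exhibit a hyperplane weight realised by every cone construction but not by $L_U$. The hyperplane of $\PG(k-1,q^m)$ containing the maximum scattered part of a cone construction has weight $m(k-1)/2$ (Theorem \ref{th:coneconstruct}), so I only need to prove that, when $i\le m-3$, no hyperplane of $\PG(k-1,q^m)$ has weight $\ge m(k-1)/2$ with respect to $L_U$. To control these weights I would dualise exactly as in the proof of Theorem \ref{thm:conekoddhyper}. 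With respect to the standard non-degenerate form the coordinate subspaces $\langle e_0,e_1,e_2\rangle_{\fqm}$ and $\langle e_3,\ldots,e_{k-1}\rangle_{\fqm}$ are mutually orthogonal, and since $U=U_1\oplus U_2$ with the two summands lying in these subspaces, one gets $U^{\perp'}=U_1^{\perp^*}\oplus U_2^{\perp^*}$, the complements being taken inside the respective coordinate subspaces. By Lemma \ref{lem:dual}, $U_1^{\perp^*}=\{(z,-y^q,-z^{q^2}+y^{q^2})\colon y\in\fqm,\ z\in S^{\perp^*}\}$, and the $q$-polynomial argument in the proof of Theorem \ref{thm:conekoddhyper} (the equation \eqref{eq:conconstrrho}, of $q$-degree at most $2$ and not identically zero in $\rho$) shows that every point of $L_{U_1^{\perp^*}}$ has weight at most $2$. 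The subspace $U_2^{\perp^*}$ is the dual, inside $\PG(k-4,q^m)$, of a maximum scattered subspace, and combining Theorem \ref{th:inter} with the relation \eqref{eq:dualweight} applied inside $\PG(k-4,q^m)$ shows $L_{U_2^{\perp^*}}$ is again scattered (of rank $m(k-3)/2$), so all its points have weight $1$. Finally, if $\langle a+b\rangle_{\fqm}\in L_{U^{\perp'}}$ with $a\in\langle e_0,e_1,e_2\rangle_{\fqm}$ and $b\in\langle e_3,\ldots,e_{k-1}\rangle_{\fqm}$, a scalar $\rho$ satisfies $\rho(a+b)\in U^{\perp'}$ precisely when $\rho a\in U_1^{\perp^*}$ and $\rho b\in U_2^{\perp^*}$, so the set of such $\rho$ is the intersection of an $\fq$-subspace of dimension at most $2$ with one of dimension at most $1$ (the degenerate cases $a=0$ or $b=0$ only make the bound easier); hence every point of $L_{U^{\perp'}}$ has weight at most $2$.

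With this in hand, substituting $\dim_{\fq}(U^{\perp'}\cap P)\le 2$ for every point $P$ into \eqref{eq:dualweight} (with $W=\Omega$ a hyperplane, $W^{\perp'}=\Omega^\perp$ the corresponding point, and $\dim_{\fq}U=m(k-1)/2+i$) gives, for every hyperplane $\Omega$, that $w_{L_U}(\Omega)=\dim_{\fq}(U^{\perp'}\cap\Omega^\perp)+m(k-3)/2+i$ lies in $\{m(k-3)/2+i,\ m(k-3)/2+i+1,\ m(k-3)/2+i+2\}$. If $i\le m-3$ the largest value in this range is $m(k-3)/2+i+2\le m(k-1)/2-1<m(k-1)/2$, so $L_U$ has no hyperplane of weight $m(k-1)/2$, contrary to what holds for every cone construction. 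Hence $L_U$ is not $\mathrm{\Gamma L}(k,q^m)$-equivalent to any $i$-club of cone-construction form, which is the assertion. I expect the third paragraph to be the main obstacle: getting the direct-sum description of $U^{\perp'}$ exactly right, transplanting the ``$q$-degree $\le 2$'' estimate from Theorem \ref{thm:conekoddhyper} to the planar component, and invoking the scatteredness of the dual of the maximum scattered component; once these point-weight bounds are established, the remaining steps are routine bookkeeping.
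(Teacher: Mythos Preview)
Your proposal is correct and follows essentially the same approach as the paper, which merely says ``can be proved similarly as in Proposition~\ref{con:conekodd}'' and ``following the same approach as in Theorem~\ref{thm:conekoddhyper}'' without spelling out the details; you have supplied those details faithfully. The one genuinely new ingredient compared with the odd-$k$ case is that $U_2$ is now an arbitrary maximum scattered subspace rather than of pseudoregulus type, so you need that $U_2^{\perp^*}$ is scattered; you obtain this via Theorem~\ref{th:inter} together with~\eqref{eq:dualweight}, which is fine (the paper elsewhere appeals to \cite[Theorem~3.5]{polverino2010linear} for the same fact).
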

\begin{proof}
    The first part can be proved similarly as has been done in Proposition \ref{con:conekodd}.
    Again, following the same approach in Theorem \ref{thm:conekoddhyper}, we have that
    \[ w_{L_U}(H) \in \{m(k-3)/2+i,m(k-3)/2+i+1,m(k-3)/2+i+2\}, \]
    for every hyperplane $H$ in $\mathrm{PG}(k-1,q^m)$.
\end{proof}

As we have already observed, for the case $k=4$ the above theorem does not hold. The main problem is that the subspace that we add in the direct sum with $U_1$ is too small to get the maximum value of the rank. When $k=4$ we are only able to provide a new construction of $i$-club when $i=m/2$.

\begin{theorem}\label{thm:constrm/2-club}
    Let $n$ be an even natural number with $m\geq 4$.
    The subspace
    \[ U=\{(x,\mathrm{Tr}_{q^m/q^{m/2}}(x),y,y^q) \colon x,y \in \fqm \}\]
    defines an $m/2$-club in $\mathrm{PG}(3,q^m)$ of rank $2m$ which is not $\mathrm{\Gamma L}(4,q^m)$-equivalent to those in Proposition \ref{prop:coneconstruct}.
\end{theorem}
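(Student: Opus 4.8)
The plan is to verify three things: (i) that $L_U$ is indeed an $m/2$-club; (ii) that its rank is $2m$; and (iii) that $L_U$ is not $\Gamma\mathrm{L}(4,q^m)$-equivalent to a cone-construction $m/2$-club. For (i) and (ii), write $U = U_1 \oplus U_2$ with $U_1 = \{(x,\mathrm{Tr}_{q^m/q^{m/2}}(x),0,0)\colon x\in\fqm\}$ living in the plane $X_2=X_3=0$ and $U_2=\{(0,0,y,y^q)\colon y\in\fqm\}$ living in the complementary plane $X_0=X_1=0$. The subspace $U_2$ is scattered (this is the classical scattered subspace $\{(y,y^q)\}$, which is maximum scattered in $\mathrm{PG}(1,q^m)$ for $m$ even — invoke Theorem~\ref{th:existencescattered} or just the standard fact that $\la v\ra_{\fqm}\cap U_2$ being $\fq$-large forces $\rho\in\fq$). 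The subspace $U_1$ is the graph of the $\fq$-linear map $x\mapsto \mathrm{Tr}_{q^m/q^{m/2}}(x)$, whose kernel has $\fq$-dimension $m/2$; a direct computation (if $\rho v\in U_1$ for $v=(x,\mathrm{Tr}(x),0,0)$ then $\rho x\cdot$ "trace of $\rho x$" compatibility forces, when $x\neq 0$, $\rho\in\fq$, while $x=0$ gives the weight-$m/2$ point $\la(1,0,0,0)\ra_{\fqm}$ from the kernel of the trace) shows $L_{U_1}$ is an $m/2$-club of rank $m$ in that plane. Then Lemma~\ref{le:iclubandscatered} gives immediately that $L_U$ is an $m/2$-club of rank $m + m = 2m$ in $\mathrm{PG}(3,q^m)$, which meets the bound of Corollary~\ref{cor:boundiclub} for $i=m/2$, $k=4$.

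For the non-equivalence statement (iii), the strategy mirrors the earlier corollaries: compare the sets of hyperplane weights. By Theorem~\ref{th:coneconstruct}, a cone-construction $i$-club in $\mathrm{PG}(3,q^m)$ with $i=m/2$ admits a hyperplane of weight $\tfrac{m(k-1)}{2}=\tfrac{3m}{2}$, and all its hyperplane weights lie in $\{m, m, m+1,\ldots, m+i+1\}=\{m,\ldots,\tfrac{3m}{2}+1\}$; in particular a weight-$\tfrac{3m}{2}$ hyperplane occurs. The plan is to show that $L_U$ has \emph{no} hyperplane of weight $3m/2$, by the same dualization device used in Theorem~\ref{thm:conekoddhyper}. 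Compute $U^{\perp'}$ explicitly: using the self-duality-type identities from Lemma~\ref{lem:dual} (the summand $\{(y,y^q)\}$ dualizes to $\{(y,-y^q)\}$, and the trace-graph summand $\{(x,\mathrm{Tr}_{q^m/q^{m/2}}(x))\}$ dualizes to something of the form $\{(\mathrm{Tr}_{q^m/q^{m/2}}(\text{-})\text{-ish}, \text{-})\}$ — more precisely $U_1^{\perp'} = \{(z, w)\colon \Tr_{q^m/q}(xz + \Tr_{q^m/q^{m/2}}(x)w)=0\ \forall x\}$, which collapses via the trace-composition rule $\Tr_{q^m/q}=\Tr_{q^{m/2}/q}\circ\Tr_{q^m/q^{m/2}}$ to a condition of the shape $z + \Tr_{q^m/q^{m/2}}(w)\cdot(\text{image part})$, giving $U_1^{\perp'}=\{(z,w)\colon z\in\fqm,\ z \equiv -w \bmod \ker\}$ of $\fq$-dimension $m$). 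Then $\dim_{\fq}(U^{\perp'}) = m + m = 2m$, and by \eqref{eq:dualweight} one has $w_{L_U}(H) = w_{L_{U^{\perp'}}}(H^\perp) + (2m - m - m) = w_{L_{U^{\perp'}}}(H^\perp)$ after the index bookkeeping — so it suffices to bound the point weights of $L_{U^{\perp'}}$. A point of $L_{U^{\perp'}}$ has weight $> 1$ only if the relevant $\fq$-subspace of scalars $\rho$ is large; running the same case analysis as in Theorem~\ref{thm:conekoddhyper} (separating the "$U_2^{\perp'}$-part nonzero" case, which forces $\rho\in\fq$, from the all-in-$U_1^{\perp'}$ case, where a Frobenius-type equation of bounded $q$-degree caps the dimension), one gets $w_{L_{U^{\perp'}}}(P)\le m/2$ for every point, hence every hyperplane weight of $L_U$ is at most $m + m/2 - \text{(something)} < 3m/2$, or at any rate the value $3m/2$ together with the full cone spread is not attained. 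Comparing the two hyperplane-weight sets then yields the contradiction, exactly as in the proof of the $k$ odd corollary.

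The main obstacle I anticipate is the bookkeeping in computing $U^{\perp'}$ and then the precise point-weight bound for $L_{U^{\perp'}}$: the trace map $\Tr_{q^m/q^{m/2}}$ is $\fqm$-semilinear-ish in disguise (it is $q^{m/2}$-to-one and its image is $\F_{q^{m/2}}$), so the dual subspace does not have the clean "graph of a $q$-polynomial" shape that made Theorem~\ref{thm:conekoddhyper} go through smoothly, and one must be careful that the point-weight estimate genuinely rules out weight $3m/2$ for hyperplanes of $L_U$ rather than merely bounding it from one side. A cleaner alternative, which I would try first, is to avoid the dual entirely: argue directly that $L_U$ has no hyperplane of weight $3m/2$ by noting that such a hyperplane $\Omega$ would intersect $L_U$ in a scattered or $i$-club linear set of rank $3m/2$, and then analyze $\Omega\cap U$ inside the two distinguished planes — if $\Omega$ contains the weight-$m/2$ point it meets the $U_1$-plane in the whole $m/2$-club (rank $m$) and the $U_2$-plane in a scattered set of rank $m/2$, giving a hyperplane weight of $3m/2$ only if such a configuration is realizable, which a short linear-algebra check (the scattered subspace $\{(y,y^q)\}$ has all its proper-hyperplane weights equal to $1$ here, forcing the intersection with the $U_2$-line to have rank $\le 1$, not $m/2$) rules out; if $\Omega$ misses the weight-$m/2$ point, $L_U\cap\Omega$ is scattered of rank $\le$ (rank of a scattered set in $\mathrm{PG}(2,q^m)$) which is bounded by $3m/2$ but then the contribution from the $U_1$-plane is capped below $m$, again excluding $3m/2$. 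Either way, once "$3m/2$ is not a hyperplane weight of $L_U$" is established, the non-equivalence with the cone construction is immediate from Theorem~\ref{th:coneconstruct}.
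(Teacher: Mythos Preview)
Your argument for parts (i) and (ii) is correct and matches the paper's exactly: decompose $U=U_1\oplus U_2$, check that $L_{U_1}$ is an $m/2$-club in the line $X_2=X_3=0$ and $L_{U_2}$ is scattered, then apply Lemma~\ref{le:iclubandscatered}.

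For (iii), however, your approach fails at its central claim. You want to show that $L_U$ has no hyperplane of weight $3m/2$, and then invoke Theorem~\ref{th:coneconstruct} to derive a contradiction. But $L_U$ \emph{does} have such a hyperplane: take $\Omega\colon X_1=0$. Then
\[
U\cap\Omega=\{(x,0,y,y^q)\colon x\in\ker\Tr_{q^m/q^{m/2}},\,y\in\fqm\},
\]
which has $\fq$-dimension $m/2+m=3m/2$. Your dualization computation would at best yield $w_{L_{U^{\perp'}}}(P)\le m/2$, and via \eqref{eq:dualweight} (with $k=4$, $n=2m$, $s=1$) this translates to $w_{L_U}(H)\le 3m/2$, not strictly less. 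Your alternative direct argument also breaks because you refer to the ``$U_1$-plane'', but $U_1$ lives in a \emph{line} of $\PG(3,q^m)$, not a plane, so the intersection bookkeeping you sketch is off.

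The paper's proof uses a different and cleaner invariant: \emph{line} weights rather than hyperplane weights. The construction $U$ visibly has a line of weight $m$ (namely $X_2=X_3=0$). Suppose $U$ were $\Gamma\mathrm{L}(4,q^m)$-equivalent to a cone-construction $\overline{U}=U'\oplus U''$ with $L_{U'}$ maximum scattered in a plane $\pi$ and $U''\subseteq\langle x\rangle_{\fqm}$, $\langle x\rangle_{\fqm}\notin\pi$. Then $L_{\overline{U}}$ would also have a line $\ell=\PG(W,\fqm)$ of weight $m$. A Grassmann estimate inside $\overline{U}$ gives $\dim_{\fq}((\overline{U}\cap W)\cap U')\ge m+\tfrac{3m}{2}-2m=\tfrac{m}{2}$, so $w_{L_{U'}}(\ell)\ge m/2$, forcing $\ell\subseteq\pi$. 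But then $\overline{U}\cap W=U'\cap W$ (since $U''$ is disjoint from $\pi$), so $w_{L_{U'}}(\ell)=m$, contradicting Theorem~\ref{th:inter}, which says lines in $\pi$ have weight at most $m/2+1<m$ with respect to $L_{U'}$.
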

\begin{proof}
    Since $\{(x,\mathrm{Tr}_{q^m/q^{m/2}}(x),0,0) \colon x \in \fqm \}$ is an $m/2$-club and $\{(0,0,y,y^q) \colon x,y \in \fqm \}$ defines a maximum scattered $\fq$-linear set in the line having equation $X_0=X_1=0$, by Lemma \ref{le:iclubandscatered} we have that $U$ defines an $m/2$-club. Now, suppose that $L_U$ is $\mathrm{\Gamma L}(4,q^m)$-equivalent to an $m/2$-club in Proposition \ref{prop:coneconstruct}, i.e. $U$ is $\mathrm{\Gamma L}(4,q^m)$-equivalent to $\overline{U}$ where 
    $$\overline{U}=U'+U'',$$
    $L_{U'}$ is a maximum scattered $\fq$-linear set contained in a plane $\pi$ of $\mathrm{PG}(3,q^m)$ and $U''$ is an $\fq$-subspace of dimension $m/2$ contained in $\langle x\rangle_{\mathbb{F}_{q^m}}$, for some point $\langle x\rangle_{\mathbb{F}_{q^m}}\notin \pi$.
    By definition, there exists at least one line having weight $m$ in $L_U$, and so there exists at least one line $\ell=\mathrm{PG}(W,\fqm)$ of weight $m$ with respect to $L_{\overline{U}}$.
    Observe that if $\overline{W}=U\cap W$ then we have
    \[ \dim_{\fq}(\overline{W}\cap U')=m+\frac{3m}2-\dim_{\fq}(\overline{W}+ U')\geq m/2,\]
    implying that $w_{L_{U'}}(\ell) \geq m/2$ and so $\ell$ must be contained in $\pi$.
    Therefore, $w_{L_{U'}}(\ell)=m$, which contradicts the fact that $L_{U'}$ is a maximum scattered linear set contained in $\pi$ and the weight of the line with respect to $L_{U'}$ can only be $m/2$ or $m/2+1$ according to Theorem \ref{th:inter}.
\end{proof}
    
\section{Classification of $(m-1)$-clubs in $\PG(k-1,q^m)$ of rank $\frac{m(k+1)}{2}-1$}\label{sec:classm-1club}

In this section, we classify \((m-1)\)-clubs having the maximum possible rank \(\frac{m(k+1)}{2} - 1\). This extends the classification of $(m-1)$-clubs in $\mathrm{PG}(1,q^m)$ (see e.g. \cite[Theorem 3.7]{csajbok2018classes} or \cite[Theorem 2.3]{de2016linear}) and $(m-1)$-clubs in $\mathrm{PG}(2,q^m)$ of rank $m+1$ in \cite[Theorem 5]{lunardon2000blocking}.

We begin with the following preliminary lemma.

\begin{lemma}\label{obs:weighdistperp}
Let $L_U$ be an $(m-1)$-club of rank $\frac{m(k+1)}{2} - 1$ in $\mathrm{PG}(k-1,q^m)$.
The dual \(L_{U^{\perp'}}\) of \(L_U\) is an \(\mathbb{F}_q\)-linear set of rank \(\frac{m(k-1)}{2} + 1\) such that 
\[
w_{L_{U^{\perp'}}}(H) \in \left\{1 + \frac{m(k-3)}{2},\; 2 + \frac{m(k-3)}{2},\; n + \frac{m(k-3)}{2} \right\},
\]
for any hyperplane \(H\) of \(\mathrm{PG}(k-1, q^m)\), and there exists exactly one hyperplane of weight \(m + \frac{m(k-3)}{2}\). In particular $\langle U^{\perp'} \rangle_{\F_{q^m}}=\mathbb{V}$.
\end{lemma}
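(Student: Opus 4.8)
The plan is to read off all three claims from the duality of Proposition~\ref{prop:dualityproperties}, so that essentially no new idea is required beyond careful bookkeeping. First I would compute the rank of the dual: by part~(ii) of Proposition~\ref{prop:dualityproperties}, and using $n=\frac{m(k+1)}{2}-1$, one gets $\dim_{\mathbb{F}_q}(U^{\perp'})=mk-n=\frac{m(k-1)}{2}+1$, which is the rank of $L_{U^{\perp'}}$.

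Next I would determine the hyperplane weights. Every hyperplane of $\mathrm{PG}(k-1,q^m)$ is $W^{\perp}$ for a unique point $W=\langle w\rangle_{\mathbb{F}_{q^m}}$, and $W^{\perp}=W^{\perp'}$ by part~(iv). Applying part~(v) of Proposition~\ref{prop:dualityproperties} with $\dim_{\mathbb{F}_{q^m}}(W)=1$ and $\dim_{\mathbb{F}_q}(U)=n$ yields
\[
w_{L_{U^{\perp'}}}(W^{\perp})=\dim_{\mathbb{F}_q}\!\big(U^{\perp'}\cap W^{\perp'}\big)=\dim_{\mathbb{F}_q}(U\cap W)+mk-n-m=w_{L_U}(W)+mk-n-m.
\]
Since $L_U$ is an $(m-1)$-club, $w_{L_U}(W)\in\{0,1,m-1\}$, so the possible hyperplane weights are $mk-n-m$, $mk-n-m+1$ and $mk-n-1$; substituting $n=\frac{m(k+1)}{2}-1$ these become $1+\frac{m(k-3)}{2}$, $2+\frac{m(k-3)}{2}$ and $m+\frac{m(k-3)}{2}=\frac{m(k-1)}{2}$. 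The largest value is attained exactly when $W$ is the unique point of weight $m-1$ in $L_U$, and since $W\mapsto W^{\perp}$ is a bijection between points and hyperplanes, there is precisely one hyperplane of weight $m+\frac{m(k-3)}{2}$. I would also note in passing that the other two values really occur: $L_U$ has weight-one points since it is a club of rank $n\ge m>1$, and $L_U\ne \mathrm{PG}(k-1,q^m)$ because $n\le m(k-1)$ (assuming $k\ge3$, which is harmless here), so Proposition~\ref{prop:linset=entirespace} guarantees points outside $L_U$.

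Finally, for $\langle U^{\perp'}\rangle_{\mathbb{F}_{q^m}}=\mathbb{V}$, I would simply invoke Proposition~\ref{prop:dualconsidercode} with $i=m-1<m$; alternatively this is self-contained, since a hyperplane $H=\mathrm{PG}(W',\mathbb{F}_{q^m})$ containing $L_{U^{\perp'}}$ would have $w_{L_{U^{\perp'}}}(H)=\dim_{\mathbb{F}_q}(U^{\perp'})=\frac{m(k-1)}{2}+1$, contradicting the maximal hyperplane weight $\frac{m(k-1)}{2}$ just computed.

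I do not expect a genuine obstacle: the lemma is a straightforward transport of the point-weight data of the club through the duality map. The only points demanding care are the arithmetic simplifications and the verification that each of the three listed weights is actually realized, which is exactly why the brief remarks about weight-zero and weight-one points are included.
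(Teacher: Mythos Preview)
Your proof is correct and follows essentially the same route as the paper: both arguments read off the rank of $U^{\perp'}$ from Proposition~\ref{prop:dualityproperties}(ii) and then transport the point-weight data $\{0,1,m-1\}$ of the club to hyperplane weights of the dual via the formula in Proposition~\ref{prop:dualityproperties}(v). You additionally spell out the ``In particular'' clause (which the paper leaves implicit, relying on Proposition~\ref{prop:dualconsidercode}) and check that all three listed weights actually occur, which is a harmless bonus since the statement only asserts containment.
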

\begin{proof}
   Since \(L_{U^{\perp'}}\) is the dual of a linear set of rank \(\frac{m(k+1)}{2} - 1\), it follows that \(L_{U^{\perp'}}\) has rank
\[
km - \left( \frac{m(k+1)}{2} - 1 \right) = \frac{m(k - 1)}{2} + 1.
\]
We now compute the weights of hyperplanes with respect to \(L_{U^{\perp'}}\). To this end, we use Proposition~\ref{prop:dualityproperties} (v), and consider a hyperplane \(H = \mathrm{PG}(W, \mathbb{F}_{q^m})\) of \(\mathrm{PG}(k-1, q^m)\). We then have:
\[
\begin{split}
\dim_{\mathbb{F}_q}(U^{\perp'} \cap W) &= \dim_{\mathbb{F}_q}(U \cap W^{\perp'}) + km - \left( \frac{m(k+1)}{2} - 1 \right) - m \\
&= \dim_{\mathbb{F}_q}(U \cap W^{\perp'}) + \frac{m(k - 3)}{2} + 1.
\end{split}
\]

Since \(L_U\) is an \((m - 1)\)-club, we know that \(\dim_{\mathbb{F}_q}(U \cap W^{\perp'}) \in \{0, 1, m - 1\}\). Therefore, the weight of \(H\) with respect to \(L_{U^{\perp'}}\) satisfies:
\[
w_{L_{U^{\perp'}}}(H) \in \left\{1 + \frac{m(k - 3)}{2},\; 2 + \frac{m(k - 3)}{2},\; m + \frac{m(k - 3)}{2} \right\}.
\]

Finally, since \(L_U\) is an \((m - 1)\)-club, there exists exactly one point of weight \(m - 1\), which corresponds (under duality) to a unique hyperplane of weight \(m + \frac{m(k - 3)}{2}\). This completes the proof.
\end{proof}

Using the result above, we can characterize the structure of $L_{U^{\perp'}}$. This will allows us to give the final characterization of $(m-1)$-clubs.

\begin{lemma}\label{le:analysedualset}
Let $L_U$ be an $(m-1)$-club of rank $\frac{m(k+1)}{2} - 1$ in $\mathrm{PG}(k-1,q^m)$.
Let \(\overline{H} = \mathrm{PG}(\overline{W}, \mathbb{F}_{q^m})\) be the unique hyperplane of weight \(m + \frac{m(k - 3)}{2}\) with respect to \(L_{U^{\perp'}}\). For any point \(Q = \langle w \rangle_{\mathbb{F}_{q^m}} \in L_{U^{\perp'}}\), with \(w \in U^{\perp'}\) and \(Q \notin \overline{H}\), we have:
\[
U^{\perp'} = (U^{\perp'} \cap \overline{W}) \oplus \langle w \rangle_{\mathbb{F}_q} \,\,\,\text{and}\,\,\, \la U^{\perp'} \cap \overline{W} \ra_{\fqm}\cap \langle w \rangle_{\fqm}=\{0\}.
\]
\end{lemma}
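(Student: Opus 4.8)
The plan is to set $V := U^{\perp'}$ and build everything on Lemma~\ref{obs:weighdistperp}, which already supplies the three facts I need: $V$ has $\fq$-dimension $N := \frac{m(k-1)}{2}+1$, $\langle V \rangle_{\fqm} = \mathbb{V}$, and the distinguished hyperplane $\overline{H} = \mathrm{PG}(\overline{W},\fqm)$ has weight $m + \frac{m(k-3)}{2} = \frac{m(k-1)}{2} = N-1$ with respect to $L_V$, i.e.\ $\dim_{\fq}(V \cap \overline{W}) = N-1$. Writing $W' := V \cap \overline{W}$, both claimed identities will then follow from one dimension count over $\fq$ and one over $\fqm$.

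For the first identity, observe that $Q = \langle w \rangle_{\fqm} \notin \overline{H}$ is equivalent to $w \notin \overline{W}$, hence to $w \notin W'$. Consequently $W' + \langle w \rangle_{\fq}$ is an $\fq$-subspace of $V$ of dimension $(N-1)+1 = N = \dim_{\fq}(V)$, so it equals $V$, and the sum is direct because $w \notin W'$. Thus $V = W' \oplus \langle w \rangle_{\fq}$.

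For the second identity I would argue by contradiction. Taking $\fqm$-spans in $V = W' \oplus \langle w \rangle_{\fq}$ gives $\mathbb{V} = \langle V \rangle_{\fqm} = \langle W' \rangle_{\fqm} + \langle w \rangle_{\fqm}$. If $\langle W' \rangle_{\fqm} \cap \langle w \rangle_{\fqm} \neq \{0\}$, then since $\langle w \rangle_{\fqm}$ is one-dimensional over $\fqm$ the intersection must be all of $\langle w \rangle_{\fqm}$, so $\langle w \rangle_{\fqm} \subseteq \langle W' \rangle_{\fqm}$ and hence $\mathbb{V} = \langle W' \rangle_{\fqm}$. But $W' = V \cap \overline{W} \subseteq \overline{W}$ forces $\langle W' \rangle_{\fqm} \subseteq \overline{W} \subsetneq \mathbb{V}$, a contradiction. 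Therefore $\langle W' \rangle_{\fqm} \cap \langle w \rangle_{\fqm} = \{0\}$, which is the second assertion.

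I do not anticipate a genuine obstacle here: all the substance sits in Lemma~\ref{obs:weighdistperp}, and what remains is elementary linear algebra. The only points demanding a little care are the equivalence "$Q \notin \overline{H} \iff w \notin \overline{W}$" and the observation that a one-dimensional $\fqm$-subspace meets any $\fqm$-subspace either trivially or in the whole of itself, which is precisely what upgrades "$w \notin \langle W' \rangle_{\fqm}$" to the stated trivial intersection.
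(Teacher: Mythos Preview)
Your proposal is correct and follows essentially the same route as the paper: both identify $W'=U^{\perp'}\cap\overline{W}$ as an $\fq$-hyperplane of $U^{\perp'}$ via the weight of $\overline{H}$ (supplied by Lemma~\ref{obs:weighdistperp}), observe that $Q\notin\overline{H}$ gives $w\notin W'$, and conclude by the obvious dimension count. You additionally spell out the second identity, which the paper leaves implicit; note that your contradiction argument via $\langle V\rangle_{\fqm}=\mathbb{V}$ works, but the more direct line is simply $\langle W'\rangle_{\fqm}\subseteq\overline{W}$ together with $\langle w\rangle_{\fqm}\cap\overline{W}=\{0\}$.
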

\begin{proof}
Let \(V = U^{\perp'} \cap \overline{H}\). Since \(\overline{H} = \mathrm{PG}(\overline{W}, \mathbb{F}_{q^m})\) is an hyperplane of weight \(m + \frac{m(k - 3)}{2}\) with respect to \(L_{U^{\perp'}}\), we have
\[
\dim_{\mathbb{F}_q}(V) = w_{L_{U^{\perp'}}}(\overline{H}) = \frac{m(k - 1)}{2}.
\]
Now, let \(Q = \langle w \rangle_{\mathbb{F}_{q^m}} \in L_{U^{\perp'}}\) be a point with \(w \in U^{\perp'}\) and \(Q \notin \overline{H}\). Since \(Q \notin \overline{H}\), it follows that $w \notin V$. Given that \(\dim_{\mathbb{F}_q}(U^{\perp'}) = \frac{m(k - 1)}{2} + 1\), we immediately obtain the decomposition:
\[
U^{\perp'} = V \oplus \langle w \rangle_{\mathbb{F}_q}.
\]
\end{proof}

This result leads to the following classification theorem, proving that any maximum $(m-1)$-club in $\PG(k-1,q^m)$ is as in the construction described in Proposition \ref{prop:coneconstruct}.

\begin{theorem} \label{th:classn-1clubmaximum}
Let \(L_U\) be an \((m - 1)\)-club in \(\mathrm{PG}(k - 1, q^m)\) of rank \(\frac{m(k + 1)}{2} - 1\). Then, up to $\Gamma \mathrm{L}(k,q^m)$-equivalence, we have
\[
U = U' \oplus U'',
\]
where \(L_{U'}\) is a maximum scattered \(\mathbb{F}_q\)-linear set contained in a hyperplane \(H\) of \(\mathrm{PG}(k - 1, q^m)\), and \(U''\) is an \((m - 1)\)-dimensional \(\mathbb{F}_q\)-subspace of \(\langle x \rangle_{\mathbb{F}_{q^m}}\), for some \(\langle x \rangle_{\mathbb{F}_{q^m}} \notin H\). Also, $L_{U^{\perp'}}$ is a scattered $\F_q$-linear set. 
\end{theorem}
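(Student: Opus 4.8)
The plan is to read the structure of $U$ off the description of $L_{U^{\perp'}}$ supplied by Lemmas~\ref{obs:weighdistperp} and~\ref{le:analysedualset}, and then to deduce that $L_{U^{\perp'}}$ is scattered by dualizing the resulting cone‑type decomposition of $U$. First, with the notation of Lemma~\ref{le:analysedualset}, let $\overline{H}=\PG(\overline{W},\fqm)$ be the unique hyperplane of maximum weight with respect to $L_{U^{\perp'}}$ and $Q=\langle w\rangle_{\fqm}$ a point of $L_{U^{\perp'}}$ with $Q\notin\overline{H}$; then $U^{\perp'}=V\oplus\langle w\rangle_{\fq}$ with $V:=U^{\perp'}\cap\overline{W}$ and $\langle V\rangle_{\fqm}\cap\langle w\rangle_{\fqm}=\{0\}$. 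Lemma~\ref{obs:weighdistperp} gives $\dim_{\fq}V=w_{L_{U^{\perp'}}}(\overline{H})=\tfrac{m(k-1)}{2}$ and $\langle U^{\perp'}\rangle_{\fqm}=\mathbb{V}$, so $\langle V\rangle_{\fqm}$ is a hyperplane, hence $\langle V\rangle_{\fqm}=\overline{W}$ and $\langle w\rangle_{\fqm}$ is a complement of $\overline{W}$. I would also record that every point of $L_{U^{\perp'}}$ lying off $\overline{H}$ has weight one: such a point is $\langle v_1+\lambda w\rangle_{\fqm}$ with $v_1\in V$, $\lambda\in\fq^{*}$, and if $\rho(v_1+\lambda w)\in U^{\perp'}$ with $\rho\in\fqm$, then $\rho v_1\in\overline{W}$ while $\rho\lambda w$ lies in the complement $\langle w\rangle_{\fqm}$, which together with $U^{\perp'}=V\oplus\langle w\rangle_{\fq}$ forces $\rho\lambda w\in\langle w\rangle_{\fq}$, i.e.\ $\rho\in\fq$.

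Next I would produce the decomposition of $U$. Put $P_0:=\overline{W}^{\perp}$, $W_H:=\langle w\rangle^{\perp}$, $H:=\PG(W_H,\fqm)$, $U'':=U\cap P_0$ and $U':=U\cap W_H$. Since $\langle w\rangle_{\fqm}\not\subseteq\overline{W}$, Proposition~\ref{prop:dualityproperties} gives $P_0\not\subseteq W_H$; as $P_0$ is a point this means $P_0\cap W_H=\{0\}$, so $U'\cap U''=\{0\}$ and $P_0\notin H$. Applying Proposition~\ref{prop:dualityproperties}(v) as in the proof of Lemma~\ref{obs:weighdistperp} gives $\dim_{\fq}U''=m-1$, so $P_0$ is exactly the weight‑$(m-1)$ point of $L_U$; applying it once more, together with $\dim_{\fq}(U^{\perp'}\cap\langle w\rangle_{\fqm})=w_{L_{U^{\perp'}}}(Q)=1$ from the first paragraph, gives $\dim_{\fq}U'=\tfrac{m(k-1)}{2}$. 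As $\dim_{\fq}U'+\dim_{\fq}U''=\tfrac{m(k+1)}{2}-1=\dim_{\fq}U$ and $U'+U''\subseteq U$, we obtain $U=U'\oplus U''$, with $U''$ an $(m-1)$‑dimensional $\fq$‑subspace of the point $P_0=\langle x\rangle_{\fqm}$ and $\langle x\rangle_{\fqm}\notin H$. Moreover every point of $L_{U'}$ lies in $H$ and hence differs from $P_0$, so it has weight one in $L_U$ and therefore weight one in $L_{U'}$; thus $L_{U'}$ is a scattered $\fq$‑linear set of rank $\tfrac{m(k-1)}{2}$ contained in $H\cong\PG(k-2,q^m)$, which by Theorem~\ref{th:rankMaxScattered} must be maximum scattered in $H$ (and, in particular, spans $H$). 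This is exactly the claimed decomposition, realizing $L_U$ as a cone construction of Proposition~\ref{prop:coneconstruct} with $i=m-1$.

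Finally, to show that $L_{U^{\perp'}}$ is scattered, I would use that by Proposition~\ref{prop:dualityproperties}(vi) the linear set $L_{U^{\perp'}}$ is determined up to $\Gamma\mathrm{L}(k,q^m)$‑equivalence independently of the chosen sesquilinear form, and that scatteredness is preserved under this equivalence. Choosing coordinates with $W_H=\langle e_1,\dots,e_{k-1}\rangle_{\fqm}$ and $P_0=\langle e_k\rangle_{\fqm}$ and taking $\sigma((a),(b))=\sum_i a_ib_i$, the decomposition $U=U'\oplus U''$ (with $U''=\{0\}^{k-1}\times S$ for an $(m-1)$‑dimensional $\fq$‑subspace $S$ of $\fqm$) dualizes to $U^{\perp'}=\hat U'\oplus\langle t e_k\rangle_{\fq}$, where $\hat U'\subseteq W_H$ is the dual of $U'$ taken inside $W_H$ and $\langle t\rangle_{\fq}=S^{\perp'}$ in $\fqm$. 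Since the dual of a maximum scattered $\fq$‑subspace is again maximum scattered (see, e.g., \cite{csajbok2017maximum}), $L_{\hat U'}$ is scattered and spans $W_H$; as $\langle t e_k\rangle_{\fq}$ is a $1$‑dimensional $\fq$‑subspace of the point $P_0\notin W_H$, Lemma~\ref{le:iclubandscatered} (with $i=1$) shows that $L_{U^{\perp'}}$ is a $1$‑club, i.e.\ a scattered linear set.

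Once Lemmas~\ref{obs:weighdistperp} and~\ref{le:analysedualset} are available, the structural part is essentially careful bookkeeping with the duality maps $\perp$ and $\perp'$; the step I would expect to be the main obstacle is the last one. There one must both recognise that dualizing the decomposition $U=U'\oplus U''$ again produces a cone‑type subspace — routine only after the sesquilinear form is chosen adapted to $W_H$ and $P_0$ — and invoke the non‑trivial fact that the dual of a maximum scattered subspace stays maximum scattered, which is precisely what delivers scatteredness of $L_{U^{\perp'}}$.
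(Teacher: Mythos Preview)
Your proof is correct and follows essentially the same strategy as the paper's: both start from Lemmas~\ref{obs:weighdistperp} and~\ref{le:analysedualset}, pass through duality to obtain the cone decomposition $U=U'\oplus U''$, and then use that the dual (inside the hyperplane) of a maximum scattered subspace is again maximum scattered to conclude that $L_{U^{\perp'}}$ is scattered.

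The execution differs in two minor respects. For the decomposition, the paper dualizes the splitting $U^{\perp'}=(U^{\perp'}\cap\overline{W})\oplus\langle w\rangle_{\fq}$ via the restricted dualities $\perp^{\star}$ and $\perp^{\star\star}$, whereas you define $U'=U\cap\langle w\rangle^{\perp}$ and $U''=U\cap\overline{W}^{\perp}$ directly and recover the same decomposition by a clean dimension count using Proposition~\ref{prop:dualityproperties}(v); your route is arguably more transparent and avoids introducing the auxiliary maps. For the scatteredness of $L_{U^{\perp'}}$, the paper handles the points on $\overline{H}$ by observing that $L_{U^{\perp'}\cap\overline{W}}$ is the hyperplane dual of the maximum scattered $L_{U'^{\perp^{\star}}}$ and invoking \cite[Theorem~3.5]{polverino2010linear}, while you instead re-dualize the cone decomposition of $U$ and apply Lemma~\ref{le:iclubandscatered} with $i=1$; the underlying input (dual of maximum scattered is maximum scattered) is the same. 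Your use of Lemma~\ref{le:iclubandscatered} with $i=1$ is a mild abuse of terminology, but the proof of that lemma goes through verbatim in this degenerate case and yields exactly that $L_{U^{\perp'}}$ is scattered.
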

\begin{proof}
By Lemma~\ref{le:analysedualset}, we know that
\[
U^{\perp'} = (U^{\perp'} \cap \overline{W}) \oplus \langle w \rangle_{\mathbb{F}_q},
\]
where \(\overline{W}\) is such that \(\overline{H} = \mathrm{PG}(\overline{W}, \mathbb{F}_{q^m})\) is the unique hyperplane of weight \(m + \frac{m(k - 3)}{2}\) with respect to \(L_{U^{\perp'}}\), and \(w \in U^{\perp'}\) is such that the corresponding point \(Q = \langle w \rangle_{\mathbb{F}_{q^m}}\) does not lie in \(\overline{H}\).
We now fix a coordinate system \((x_0, \ldots, x_{k-1})\) on \(\mathrm{PG}(k - 1, \mathbb{F}_{q^m})\) such that \(\overline{H}\) is the hyperplane defined by the equation \(x_{k-1} = 0\), and the point \(Q = \langle w \rangle_{\mathbb{F}_{q^m}}\) corresponds to \(\langle (0, \ldots, 0, 1) \rangle_{\mathbb{F}_{q^m}}\).
We consider the standard inner product as a sesquilinear form \(\sigma \colon \mathbb{F}_{q^m}^k \times \mathbb{F}_{q^m}^k \rightarrow \mathbb{F}_{q^m}\), and define
\[
\sigma' \colon \mathbb{F}_{q^m}^k \times \mathbb{F}_{q^m}^k \rightarrow \mathbb{F}_q, \quad \sigma'(u,v) = \mathrm{Tr}_{q^n/q}(\sigma(u,v)).
\]
We let \(\perp'\) denote the orthogonality map defined by \(\sigma'\). Note that a different choice of sesquilinear form \(\sigma\) yields \(\Gamma \mathrm{L}(k, \mathbb{F}_{q^m})\)-equivalent subspaces \(U^{\perp'}\) (cf.\ Proposition~\ref{prop:dualityproperties}).

Now, let \(\perp^\star\) and \(\perp^{\star\star}\) denote the duality map induced by the bilinear form $\sigma'$ restricted to the hyperplane \(\overline{H} = \mathrm{PG}(\overline{W}, \mathbb{F}_{q^m})\) defined by \(x_{k-1} = 0\), and the subspace defined by \(x_0 = \cdots = x_{k-2} = 0\), respectively.  Hence, it is easy to observe that:
\[
U = (U^{\perp'})^{\perp'} = U'^{\perp^\star} \oplus U''^{\perp^{\star\star}},
\]
where \(U' = U^{\perp'} \cap \overline{W}\) and \(U'' = \langle w \rangle_{\mathbb{F}_q}\).
Clearly, 
\[ w_{L_U}(\la (0,\ldots,0,1)\ra_{\fqn})=\dim_{\fq}(U''^{\perp**})=m-1, \]
and $\dim_{\fq}(U'^{\perp^\star})=m(k-1)/2$.
Note that $L_{U'^{\perp^\star}}$ is scattered with 
\[
\begin{array}{rl}
\mathrm{Rank}(L_{U'^{\perp^\star}}) & =\dim_{\F_q}\left(U'^{\perp^\star}\right) \\
& = (k-1)m-\dim_{\F_q}\left(U'\right) \\
& = \frac{(k-1)m}{2}.
\end{array}
\]
Indeed, $L_U$ is an $(m-1)$-club and the only point of weight $m-1$ is $\la(0,\ldots,0,1)\ra_{\fqm}$ which is not in $L_{U'^{\perp^\star}}$.
This proves the first part of the assertion. It remains to show that \(L_{U^{\perp'}}\) is a scattered $\F_q$-linear set. We do this by studying the weight of its points.
First, consider any point \(P = \langle v \rangle_{\mathbb{F}_{q^m}} \in L_{U^{\perp'}} \setminus L_{U'}\). We claim that such a point also has weight $1$. Indeed, if this point has weight at least $2$, since $\langle v \rangle_{\fqm} \cap \langle U' \rangle_{\fqm}=\{0\}$, then we would have:
\[
\dim_{\mathbb{F}_q}(\langle v, U' \rangle_{\mathbb{F}_{q^m}} \cap U^{\perp'}) \geq \dim_{\mathbb{F}_q}(\langle v \rangle_{\mathbb{F}_{q^m}} \cap U^{\perp'}) + \dim_{\mathbb{F}_q}(U' \cap U^{\perp'}) > \frac{m(k - 1)}{2} + 1,
\]
which contradicts the fact that \(\dim_{\mathbb{F}_q}(U^{\perp'}) = \frac{m(k - 1)}{2} + 1\). It remains to show that \(L_{U'}\) is scattered. This follows by the fact that $L_{U'}$ is contained in the hyperplane $\bar{H}$ and $L_{U'^{\perp*}}$ is a maximum scattered linear set in $\bar{H}$, cf. \cite[Theorem 3.5]{polverino2010linear}. This concludes the proof. 
\end{proof}

We finally observe that a similar classification result cannot be provided for maximum $i$-club, when $i\leq m-2$, as we have shown that there are inequivalent constructions when $i\leq m-2$.

\section{Three weight rank-metric codes}\label{sec:threeweights}

We will show how the geometry of the three-weight rank-metric codes is much more rich than the one of two-weight rank-metric codes.
Indeed, a two-weight rank-metric code has been geometrically characterized as either scattered or the dual of a scattered linear set; see \cite{zullo2024two,pratihar2024antipodal}.

We will now give some examples of three-weight codes arising from $\fq$-subspaces in $\F_{q^m}^k$ defining some known linear sets.
We split the analysis according to: dimension of the code equals to two, dimension of the code equal to three, dimension of the code greater than three and three-weight codes arising from clubs.
In all of the next subsections we will assume that $\langle U \rangle_{\F_{q^m}}=\F_{q^m}^k$ in order to have the correspondence with the rank-metric codes.

\subsection{$k=2$}

We introduce two constructions of $\fq$-subspaces of $\F_{q^m}^2$ yielding $[n,2]_{q^m/q}$ codes with exactly three nonzero weights, characterized by the presence of one or two distinct $1$-dimensional $\F_{q^m}$-subspaces whose intersections with the given subspace have $\fq$-dimension strictly greater than one, while all other such intersections are of dimension at most one.

\begin{construction}[\textbf{Club}]\label{exiclub}
Let $U$ be an $\fq$-subspace of $\F_{q^m}^2$ with dimension $n$ such that there exists exactly one $1$-dimensional $\F_{q^m}$-subspace $\langle v \rangle_{\F_{q^m}}$ of $\F_{q^m}^2$ such that $\dim_{\fq}(U\cap \langle v \rangle_{\F_{q^m}})=i>1$ and for all the remaining $1$-dimensional $\F_{q^m}$-subspaces the intersection with $U$ has dimension either $0$ or $1$.
An $[n,2]_{q^m/q}$ code associated with $U$ has dimension two and nonzero weights $n-i, n-1,n$.
\end{construction}

For the known results and constructions, see Section \ref{sec:genclubs}.

\begin{construction}[\textbf{Complementary weights}]\label{excompl}
Let $U$ be an $\fq$-subspace of $\F_{q^m}^2$ with dimension $n$ such that there exists exactly two distinct $1$-dimensional $\F_{q^m}$-subspace $\langle v_1 \rangle_{\F_{q^m}}$ and $\langle v_2 \rangle_{\F_{q^m}}$ of $\F_{q^m}^2$ such that $\dim_{\fq}(U\cap \langle v_j \rangle_{\F_{q^m}})=s>1$, for $j \in \{1,2\}$, $2s=n$ and for all the remaining $1$-dimensional $\F_{q^m}$-subspaces the intersection with $U$ has dimension either $0$ or $1$.
An $[n,2]_{q^m/q}$ code associated with $U$ has dimension two and nonzero weights $n-s, n-1,n$.
\end{construction}

Constructions of such subspaces can be found in \cite{napolitano2022linear}. 
In \cite[Theorem 4.4]{napolitano2022linear}, it has been proved that, under the assumptions of Construction \ref{excompl} and $n=m$, the only possible value for $s$ is $m/2$. 
An example is described in \cite[Corollary 4.9]{napolitano2022linear}:
let $m=2t$, $\mu \in \F_{q^t}^*$ such that $\N_{q^t/q}(\mu) \neq 1$ and $\N_{q^{t}/q}(-\xi^{q^t+1}\mu)\neq (-1)^t$, 
with $\xi \in \F_{q^{2t}}\setminus\F_{q^t}$.
Then
\[ U=\{ (u+\xi \mu u^{q},v+\xi v^{q}) \colon u,v \in \F_{q^t} \}, \]
is an example of Construction \ref{excompl}.

In \cite{de2022weight}, some computational results show the existence for $q \in \{2,3\}$ of $\fq$-subspaces $U$ in $\F_{q^5}^2$ of dimension $5$ such that there exists exactly two $1$-dimensional $\F_{q^m}$-subspace $\langle v_1 \rangle_{\F_{q^m}}$ and $\langle v_2 \rangle_{\F_{q^m}}$ of $\F_{q^m}^2$ such that $\dim_{\fq}(U\cap \langle v_j \rangle_{\F_{q^m}})=2$, for $j \in \{1,2\}$, and for all the remaining $1$-dimensional $\F_{q^m}$-subspaces the intersection with $U$ has dimension either $0$ or $1$.
An $[5,2]_{q^5/q}$ code associated with $U$ has nonzero weights $3,4,5$.
These examples cannot be extended to $q>3$ due to \cite[Main Theorem]{de2022weight}.

\subsection{$k=3$}

We present constructions of $\mathbb{F}_q$-subspaces of $\mathbb{F}_{q^m}^3$ whose intersection profiles with $2$-dimensional $\mathbb{F}_{q^m}$-subspaces are highly constrained, resulting in codes with exactly three nonzero weights. These constructions include $\fq$-subspaces that are scattered with respect to the lines, as well as subspaces admitting a small number of higher-dimensional intersections with $\F_{q^m}$-lines, and are closely related to blocking sets of Rédei type.

\begin{construction}[\textbf{Scattered spaces with respect to the lines}]
Let $U$ be an $\fq$-subspace of $\F_{q^m}^3$ with dimension $n$ such that for every $2$-dimensional $\F_{q^m}$-subspace $W$ then $\dim_{\fq}(W\cap U)\leq 2$. An $[n,3]_{q^m/q}$ code associated with $U$ has dimension three and nonzero weights $n-2,n-1,n$.
\end{construction}

Constructions of such subspaces can be found in \cite{csajbok2021generalising,lunardon2017mrd,sheekeyVdV}.
Let $\gcd(s,m)=1$ and $\delta \in \F_{q^m}$ such that $\N_{q^m/q}(\delta)\ne (-1)^m$.
An example is given by
\[ U=\{(x-\delta x^{q^{3s}},x^{q^s},x^{q^{2s}})\colon x \in \mathbb{F}_{q^m}\}\subseteq \mathbb{F}_{q^m}^3, \]
which corresponds to Generalized Twisted Gabidulin codes; see \cite{sheekey2016new,lunardon2018generalized}.

\begin{construction}[\textbf{Scattered blocking sets of R\'edei type}]
Let $U$ be an $\fq$-subspace of $\F_{q^m}^3$ with dimension $m+1$ such that there exists a $2$-dimensional $\F_{q^m}$-subspace $\overline W$ with the property that $\dim_{\fq}(\overline W\cap U)=m$ and $U$ defines a scattered space. This implies that $\dim_{\fq}(U\cap W)\in \{1,2,m\}$ for any $2$-dimensional $\F_{q^m}$-subspace $W$ of $\F_{q^m}^3$. An $[m+1,3]_{q^m/q}$ code associated with $U$ has dimension three and nonzero weights $1,m-1,m$.
\end{construction}

Constructions of such subspaces can be found in e.g. \cite{lunardon2001blocking,Polverinothesis}.
An example is given by the following subspace
\[ U=\{(x-\delta x^{q^{2s}},x^{q^s},\alpha)\colon x \in \F_{q^m},\alpha \in \fq\}\subseteq \mathbb{F}_{q^m}^3, \]
where $\gcd(s,m)=1$ and $\N_{q^m/q}(\delta)\ne 1$.

\begin{construction}[\textbf{Complementary weights}]\label{con:complplane}
Let $U$ be an $\fq$-subspace of $\F_{q^m}^3$ with dimension $n$ such that there exists exactly three distinct $1$-dimensional $\F_{q^m}$-subspace $\langle v_1 \rangle_{\F_{q^m}}$, $\langle v_2 \rangle_{\F_{q^m}}$ and $\langle v_3 \rangle_{\F_{q^m}}$ of $\F_{q^m}^3$ such that $\dim_{\fq}(U\cap \langle v_j \rangle_{\F_{q^m}})=s>1$, for $j \in \{1,2,3\}$, and for all the remaining $1$-dimensional $\F_{q^m}$-subspaces the intersection with $U$ has dimension either $0$ or $1$.
An $[3m-n,3]_{q^m/q}$ code associated with $U^{\perp'}$ has dimension three and nonzero weights $m-s,m-1,m$.
\end{construction}

Constructions of such subspaces can be found in \cite{zullo2021multi}.
An example is given in \cite[Corollary 6.8]{zullo2021multi}:
let $q\geq 4$, $m=2t$, $\mu_1,\mu_2,\mu_3 \in \F_{q^t}^*$ such that $\N_{q^t/q}(\mu_i) \neq \N_{q^t/q}(\mu_j)$ and $\N_{q^{t}/q}(-\xi^{q^t+1}\mu_i\mu_j)\neq (-1)^t$ for any $i,j \in \{1,2,3\}$ with $i\ne j$, 
with $\xi \in \F_{q^{2t}}\setminus\F_{q^t}$.
Then
\[ U=\{ (u+\xi \mu_1 u^{q},v+\xi \mu_2 v^{q},z+\xi \mu_3 z^{q}) \colon u,v,z \in \F_{q^t} \} \subseteq \F_{q^m}^3. \]
In this case $s=t=m/2$.

\subsection{$k>3$}

We consider families of $\mathbb{F}_q$-subspaces of $\mathbb{F}_{q^m}^k$ whose structural constraints on intersections with $\mathbb{F}_{q^m}$-hyperplanes induce codes with exactly three nonzero weights. In particular, we focus on \emph{maximum scattered} subspaces with respect to $\F_{q^m}$-lines, i.e., $\fq$-subspaces of dimension $km/3$ intersecting every $2$-dimensional $\F_{q^m}$-subspace in dimension at most $2$, and on duals of subspaces exhibiting a controlled number of high-dimensional intersections with $1$-dimensional $\F_{q^m}$-subspaces. These constructions generalize previous results for $k=3$.

\begin{construction}[\textbf{Maximum scattered spaces with respect to the lines}]\label{con:maxscatlines}
Let $U$ be an $\fq$-subspace of $\F_{q^m}^k$ with dimension $km/3$ such that for every $2$-dimensional $\F_{q^m}$-subspace $W$ then $\dim_{\fq}(W\cap U)\leq 2$. An $[n,k]_{q^m/q}$ code $\C$ associated with $U$ has dimension $k$ and nonzero weights $m-2,m-1,m$.
\end{construction}

Indeed, in \cite[Theorem 2.7]{csajbok2021generalising} it has been proved that if $U$ is an $\fq$-subspace as in Construction \ref{con:maxscatlines}, then
\[ \dim_{\fq}(U\cap H) \in \left\{ \frac{km}3-m,\frac{km}3-m+1, \frac{km}3-m+2 \right\}, \]
for any $(k-1)$-dimensional $\F_{q^m}$-subspace of $\F_{q^m}^k$.
Moreover, by \cite[Theorem 7.1]{zini2021scattered} for each of these values there exists at least one hyperplane meeting $U$ in this dimension
(actually they can be characterized via these intersection numbers, see \cite[Corollary 5.3]{zini2021scattered}). Let \( G \) be a generator matrix of \( \mathcal{C} \) whose columns form an \( \mathbb{F}_q \)-basis of \( U \). Then, by Theorem~\ref{th:connection}, the rank weight of a codeword \( xG \in \mathcal{C} \), with \( x \in \mathbb{F}_{q^m}^k \), is given by
\[
w(xG) = \frac{km}{3} - \dim_{\mathbb{F}_q}(U \cap x^{\perp}) \in \{m-2, m-1, m\}.
\]

Constructions of such subspaces can be found in \cite{csajbok2021generalising}.
An example is given by the following subspace:
let $k=3t$, then
\[ U=\{ (x_1,x_1^q,x_1^{q^2},x_2,x_2^q,x_2^{q^2},\ldots,x_t,x_t^q,x_t^{q^2}) \colon x_1,\ldots,x_t \in \F_{q^m} \}\subseteq \F_{q^m}^{k}, \]
see also Section 6 of \cite{napolitano2020codes}.

\begin{construction}[\textbf{Complementary weights}] \label{const:complweightsk>3}
Let $U$ be an $\fq$-subspace of $\F_{q^m}^k$ with dimension $n$ such that there exists exactly $k$ distinct $1$-dimensional $\F_{q^m}$-subspace $\langle v_1 \rangle_{\F_{q^m}},\ldots,\langle v_k \rangle_{\F_{q^m}}$ of $\F_{q^m}^k$ such that $\dim_{\fq}(U\cap \langle v_j \rangle_{\F_{q^m}})=s>1$, for $j \in \{1,\ldots,k\}$, and for all the remaining $1$-dimensional $\F_{q^m}$-subspaces the intersection with $U$ has dimension either $0$ or $1$.
A $[km-n,k]_{q^m/q}$ code associated with $U^{\perp'}$ has dimension $k$ and nonzero weights $m-s,m-1,m$.
\end{construction}

Constructions of such subspaces can be found in \cite{zullo2021multi}, which extends Construction \ref{con:complplane} to higher dimensions.
An example is given in \cite[Corollary 6.8]{zullo2021multi}:
let $q\geq k+1$, $m=2t$, $\mu_1,\ldots,\mu_k \in \F_{q^t}^*$ such that $\N_{q^t/q}(\mu_i) \neq \N_{q^t/q}(\mu_j)$ and $\N_{q^{t}/q}(-\xi^{q^t+1}\mu_i\mu_j)\neq (-1)^t$ for any $i,j \in \{1,\ldots,k\}$ with $i\ne j$, 
with $\xi \in \F_{q^{2t}}\setminus\F_{q^t}$.
Then
\[ U=\{ (u_1+\xi \mu_1 u_1^{q},\ldots,u_k+\xi \mu_k u_k^{q}) \colon u_1,\ldots,u_k \in \F_{q^t} \}, \]
is an example of Construction \ref{const:complweightsk>3}.
In this case $s=m/2$.

\subsection{Three-weight rank-metric codes from clubs}

In order to prove an upper bound on the rank of an $i$-club, we have used MacWilliams identities on the code associated with dual of the considered club; see Proposition \ref{prop:codeparam}. Clearly, also the converse holds: if $\C$ is an $[n,k,d]_{q^m/q}$ with $A_{m}\ne 0$, $A_{m-1}\ne 0$ and $A_d=q^m-1$, then $\C$ is associated with the dual of an $i$-club.
Therefore, together with Proposition \ref{prop:codeparam}, we have the following one-to-one correspondence.

\begin{theorem}
    Let $\C$ be an $[n,k,d]_{q^m/q}$ code and $U$ an its associated system. 
    We have that $\C$ is a three-weight code with $A_{m}\ne 0$, $A_{m-1}\ne 0$ and $A_d=q^m-1$ if and only if $L_{U^\perp}$ is an $(m-d)$-club of rank $km-n$ in $\mathrm{PG}(k-1,q^m)$.
    In particular,
    \[ n\leq  
    \begin{cases}
        \frac{mk}2 & \text{if either }\,\, d\geq m/2 \,\, \text{or } k=2,\\
        \frac{m(k+1)}2-m+d & \text{if }\,\, 1\leq d\leq m/2.
    \end{cases}
    \]
\end{theorem}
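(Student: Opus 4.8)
The plan is to establish the claimed equivalence via the dictionary between rank-metric codes and systems (Theorem~\ref{th:connection}), and then to read off the bound from the results already proved for $i$-clubs in Sections~\ref{sec:bound}--\ref{sec:classm-1club}. First I would address the forward direction: assume $\C$ is a three-weight $[n,k,d]_{q^m/q}$ code with $A_m\neq 0$, $A_{m-1}\neq 0$ and $A_d=q^m-1$. Using \eqref{eq:relweight}, the rank weight of $xG$ equals $n-\dim_{\fq}(U\cap x^\perp)$, so the three nonzero weights $m$, $m-1$, $d$ correspond exactly to hyperplanes $H=x^\perp$ with $\dim_{\fq}(U\cap H)\in\{n-m,\,n-m+1,\,n-d\}$. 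Since proportional codewords give the same hyperplane, $A_d=q^m-1$ forces exactly one hyperplane $\overline{H}$ with $\dim_{\fq}(U\cap\overline{H})=n-d$. Now I translate this into the dual system $U^{\perp'}$ (viewed inside $\F_{q^m}^k$) using part (v) of Proposition~\ref{prop:dualityproperties}, i.e.\ \eqref{eq:dualweight}: for an $\F_{q^m}$-hyperplane $H$ one gets $\dim_{\fq}(U^{\perp'}\cap H^{\perp'})-\dim_{\fq}(U\cap H)= km-n-(k-1)m = m-n$, and since $H^{\perp'}=H^\perp$ is a point, this says the weight in $L_{U^{\perp'}}$ of the point $H^\perp$ equals $\dim_{\fq}(U\cap H)+m-n$. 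Hence the point-weights of $L_{U^{\perp'}}$ lie in $\{0,1,m-d\}$, with exactly one point of weight $m-d$ (the one dual to $\overline{H}$), which is precisely the definition of an $(m-d)$-club; its rank is $\dim_{\fq}(U^{\perp'})=km-n$ by part (ii) of Proposition~\ref{prop:dualityproperties}. (A small point: one should note $d<m$ here, since $A_m\neq 0$ and $A_{m-1}\neq 0$ already force at least two distinct nonzero weights below or at $m$, so $m-d\geq 1$; and that non-degeneracy of $\C$ gives $\langle U^{\perp'}\rangle_{\F_{q^m}}=\F_{q^m}^k$, consistent with Proposition~\ref{prop:dualconsidercode}.)

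For the converse, suppose $L_{U^\perp}$ is an $(m-d)$-club of rank $km-n$ in $\PG(k-1,q^m)$ with $m-d<m$, and let $\C$ be a code associated with $U$ (so $U$ is an $[n,k]$ system, i.e.\ $\langle U\rangle_{\F_{q^m}}=\F_{q^m}^k$, which by Proposition~\ref{prop:dualconsidercode} applied to the $(m-d)$-club forces $\langle (U^\perp)^{\perp'}\rangle = \langle U\rangle=\F_{q^m}^k$ when $m-d<m$). Here one must be slightly careful that $\perp$ on the ambient space and $\perp'$ restricted to $U^\perp$ interact correctly; using part (iv) of Proposition~\ref{prop:dualityproperties} one has $(U^{\perp'})$ and $U^\perp$ agree on $\F_{q^m}$-subspaces, and for the general $\F_q$-subspace $U$ one invokes the duality identity \eqref{eq:dualweight} in the other direction. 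Running the same weight computation backwards: the point-weights $\{0,1,m-d\}$ of $L_{U^\perp}$, with a unique point of weight $m-d$, translate (again via \eqref{eq:dualweight}) into hyperplane intersection dimensions $\{n-m,n-m+1,n-d\}$ for $U$, with a unique hyperplane of the largest type; by \eqref{eq:relweight} the code $\C$ then has exactly the three nonzero weights $m$, $m-1$, $d$ (one checks $m-1>d$ using $m-d\geq 2$; if $m-d=1$ the ``$m-1$'' and ``$d$'' weights coincide and this is the degenerate two-weight case already excluded by the hypothesis $A_{m-1}\neq 0$, $A_d=q^m-1$ being two separate conditions — so implicitly $d\le m-2$, i.e.\ $m-d\ge 2$, matching the range $2\le i$ in Corollary~\ref{cor:boundiclub}), and $A_d=q^m-1$ because the unique weight-$(m-d)$ point of the club contributes $q^m-1$ proportional codewords. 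This is essentially the content already recorded in Proposition~\ref{prop:codeparam} and the remark preceding this theorem, so the converse is mostly bookkeeping.

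Finally, for the numerical bound I would simply feed the club $L_{U^\perp}$ of rank $km-n$ into Corollary~\ref{cor:boundiclub} with $i=m-d$. When $d\geq m/2$ we have $i=m-d\leq m/2$, so Corollary~\ref{cor:boundiclub} gives $km-n\leq mk/2$, i.e.\ $n\geq mk/2$; wait — I should be careful about the direction of the inequality, since a bound on the rank of the club is an \emph{upper} bound $km-n\le (\text{something})$, which becomes a \emph{lower} bound on $n$, whereas the statement claims an \emph{upper} bound on $n$. So actually the correct reading is: the rank of the club is $km-n\geq$ (some minimum), equivalently we need the club to be \emph{non-degenerate} so that Proposition~\ref{prop:linset=entirespace}/\ref{prop:genbound}-type arguments apply — let me reconsider. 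The bound in the theorem, $n\le mk/2$ when $d\ge m/2$ and $n\le \tfrac{m(k+1)}{2}-m+d$ when $d\le m/2$, must come from bounding the rank of the club \emph{from below}: since $L_{U^\perp}$ has rank $r=km-n$ and contains a point of weight $i=m-d$, and since a linear set of rank $r$ can contain a point of weight $i$ only if $r\ge i$ (indeed $r\ge i$ always, but more is true), in fact the relevant inequality is that an $i$-club of rank $r$ must have $r \ge$ ... — hmm, the \emph{cleanest} route is: by the remark after Corollary~\ref{cor:boundiclub} and the dual formulation, the rank $km-n$ of the $(m-d)$-club satisfies $km - n \ge \tfrac{m(k-1)}{2}$ when $d\ge m/2$ and $\ge \tfrac{m(k-1)}{2}$ — no. The hard part will be getting the inequality direction and the exact threshold arithmetic right; the mechanism, though, is unambiguous: apply Corollary~\ref{cor:boundiclub} to the $(m-d)$-club $L_{U^\perp}$ of rank $km-n$, obtaining $km-n \le \frac{m(k-1)}{2} + (m-d)$ when $m-d \ge m/2$ (i.e.\ $d\le m/2$), which rearranges to $n \ge \frac{m(k+1)}{2} - (m-d) = \frac{m(k-1)}{2} + d$ — still the wrong direction. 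I would therefore, in writing this up, re-derive the bound directly from the MacWilliams computation in the proof of Theorem~\ref{th:mainBound} applied to $\C$ itself (whose weight distribution is exactly that of Proposition~\ref{prop:codeparam} with roles of $n$ and $km-n$ swapped), so that the positivity of $B_2$ yields the stated upper bound on $n$; this is the one genuinely computational step, and it is the analogue of inequality~\eqref{eq:supercond} read off for the code $\C$ of length $n$ rather than $km-n$. Thus the proof structure is: (1) the equivalence via \eqref{eq:dualweight} and \eqref{eq:relweight}; (2) the bound via MacWilliams positivity ($B_2\ge 0$) for $\C$, exactly mirroring Theorem~\ref{th:mainBound}.
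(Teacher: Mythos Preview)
Your treatment of the equivalence is correct and is precisely what the paper does: translate the three nonzero weights $m,\,m-1,\,d$ of $\C$ via \eqref{eq:relweight} into hyperplane intersection dimensions $\{n-m,\,n-m+1,\,n-d\}$ for $U$, with a unique hyperplane of the last type, and then pass to $U^{\perp'}$ via \eqref{eq:dualweight} to see that $L_{U^{\perp'}}$ has point-weights in $\{0,1,m-d\}$ with a unique point of weight $m-d$. The converse is, as you say, just Proposition~\ref{prop:codeparam} read backwards. Your side remarks about $d<m-1$ (so that $i=m-d\ge 2$) and non-degeneracy are the right bookkeeping.

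Where you get tangled is exactly the right place to be suspicious. The paper's proof of the numerical bound is one line: ``The last part follows from Corollary~\ref{cor:boundiclub}.'' Your computation of what Corollary~\ref{cor:boundiclub} actually gives is correct: feeding in the $(m-d)$-club of rank $km-n$ yields $km-n\le mk/2$ when $d\ge m/2$, i.e.\ $n\ge mk/2$, and $km-n\le \tfrac{m(k-1)}{2}+(m-d)$ when $d\le m/2$, i.e.\ $n\ge \tfrac{m(k-1)}{2}+d=\tfrac{m(k+1)}{2}-m+d$. The displayed inequality in the statement has the sign reversed; it should read $n\ge$, not $n\le$. (This is consistent with the discussion in the paper: the club has \emph{maximum} rank exactly when equality holds, and the explicit codes in the next theorem have length $n=\tfrac{m(k-1)}{2}+1$, the minimum value allowed when $d=1$.) Your proposed fallback---rerunning the MacWilliams argument of Theorem~\ref{th:mainBound} directly on $\C$---would not produce an upper bound on $n$ either: the code $\C$ here is precisely the code of Proposition~\ref{prop:codeparam} with the roles of $n$ and $km-n$ interchanged, so the same positivity of $B_2$ gives the same inequality, again $n\ge\cdots$. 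So drop that detour, trust your first computation, and note the sign error in the statement.
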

\begin{proof}
    This is a direct consequence of the fact that for any hyperplane $H$, by \eqref{eq:relweight}, we have that
    \[ \dim_{\fq}(U\cap H)\in\{n-m,n-m+1,n-d\}, \]
    and there exists only one hyperplane such that $\dim_{\fq}(U\cap H)=n-d$.
    By using Proposition \ref{prop:dualityproperties} we get the assertion.
    The converse is Proposition \ref{prop:codeparam}.
    The last part follows from Corollary \ref{cor:boundiclub}.
\end{proof}

We can therefore characterize the three-weight codes associated with the dual of $(m-1)$-clubs making use of the results in Section \ref{sec:classm-1club}.
We start by determining the weight distribution of the following class of three-weight rank-metric codes.

\begin{theorem}
    Let $\C$ be a three-weight code with parameters $[m(k-1)/2+1,k,1]_{q^m/q}$ and $A_m\ne 0$, $A_{m-1}\ne 0$ and $A_1=q^m-1$.
    We have that $\C$ is equivalent to
    \[ \C= \C_1 \oplus \C_2, \]
    where $\C_1=\overline{\C}_1\times\{0\}$, $\overline{\C}_1$ is an MRD code with parameters $[m(k-1)/2,k-1,m-1]_{q^m/q}$ and $\C_2=\la (0,\ldots,0,1)\ra_{\fqm}$.
\end{theorem}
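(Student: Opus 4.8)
The plan is to reduce everything to the classification of maximum $(m-1)$-clubs obtained in Theorem~\ref{th:classn-1clubmaximum} and then dualize. Let $U\subseteq\F_{q^m}^k$ be a system associated with $\C$, so $\dim_{\F_q}U=n=\frac{m(k-1)}{2}+1$. Since the minimum distance is $1$ and $A_1=q^m-1$, $A_{m-1}\neq 0$, $A_m\neq 0$, the previous theorem says that $L_{U^{\perp'}}$ is an $(m-1)$-club of rank $km-n=\frac{m(k+1)}{2}-1$ in $\mathrm{PG}(k-1,q^m)$; this is exactly the maximum rank allowed by Corollary~\ref{cor:boundiclub}. (We may assume $k\ge 3$ and $m\ge 3$; otherwise, by Proposition~\ref{prop:codeparam} and Corollary~\ref{cor:boundiclub}, no code satisfying the hypotheses exists.) Hence Theorem~\ref{th:classn-1clubmaximum} applies: up to $\Gamma\mathrm{L}(k,q^m)$-equivalence,
\[
U^{\perp'}=\mathcal{S}\oplus\mathcal{P},
\]
where $L_{\mathcal{S}}$ is a maximum scattered $\F_q$-linear set contained in a hyperplane $H=\mathrm{PG}(W_0,\F_{q^m})$ (so $\mathcal{S}\subseteq W_0$ with $\dim_{\F_q}\mathcal{S}=\frac{m(k-1)}{2}$), $\mathcal{P}\subseteq\langle x\rangle_{\F_{q^m}}$ with $\dim_{\F_q}\mathcal{P}=m-1$ for some point $\langle x\rangle_{\F_{q^m}}\notin H$, and moreover $L_U=L_{(U^{\perp'})^{\perp'}}$ is scattered.

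The core step is to transport the splitting $U^{\perp'}=\mathcal{S}\oplus\mathcal{P}$ through the $\F_q$-duality to recover $U=(U^{\perp'})^{\perp'}$. I would pick coordinates so that $W_0=\langle e_1,\dots,e_{k-1}\rangle_{\F_{q^m}}$ and $\langle x\rangle_{\F_{q^m}}=\langle e_k\rangle_{\F_{q^m}}$, take $\sigma$ to be the standard inner product, and write $\perp^{\star}$, $\perp^{\star\star}$ for the dualities induced on $W_0$ and on the line $\langle e_k\rangle_{\F_{q^m}}$. Writing $\mathcal{P}=S''e_k$ for an $(m-1)$-dimensional $\F_q$-subspace $S''\subseteq\F_{q^m}$, the splitting $\F_{q^m}^k=W_0\oplus\langle e_k\rangle_{\F_{q^m}}$ gives by a short computation $\mathcal{S}^{\perp'}=\mathcal{S}^{\perp^{\star}}\oplus\langle e_k\rangle_{\F_{q^m}}$ and $\mathcal{P}^{\perp'}=W_0\oplus (S'')^{\perp^{\star\star}}e_k$, whence
\[
U=\mathcal{S}^{\perp'}\cap\mathcal{P}^{\perp'}=\mathcal{A}\oplus\bigl((S'')^{\perp^{\star\star}}e_k\bigr),\qquad \mathcal{A}:=\mathcal{S}^{\perp^{\star}}\subseteq W_0,
\]
and comparing $\F_q$-dimensions (both sides have dimension $\frac{m(k-1)}{2}+1=n$) shows the sum is direct and exhausts $U$. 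Rescaling the last coordinate, we may take $(S'')^{\perp^{\star\star}}=\F_q$, so that $U=\mathcal{A}\oplus\langle e_k\rangle_{\F_q}$.

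It then remains to read off the code. Since $L_{\mathcal{A}}\subseteq L_U$ is scattered and $\dim_{\F_q}\mathcal{A}=\frac{m(k-1)}{2}$ is the maximum scattered rank in $\mathrm{PG}(W_0,\F_{q^m})\cong\mathrm{PG}(k-2,q^m)$, the linear set $L_{\mathcal{A}}$ is maximum scattered, and in particular $\langle\mathcal{A}\rangle_{\F_{q^m}}=W_0$ (a scattered set spanning a proper subspace of $W_0$ has strictly smaller rank by Theorem~\ref{th:rankMaxScattered}). Taking a generator matrix $G$ of $\C$ whose columns form an $\F_q$-basis of $U$ given by a basis of $\mathcal{A}$ followed by $e_k$, one gets the block form $G=\left(\begin{smallmatrix}G_1&0\\0&1\end{smallmatrix}\right)$, where $G_1$ is the $(k-1)\times\frac{m(k-1)}{2}$ matrix whose columns are that basis of $\mathcal{A}$, viewed in $\F_{q^m}^{k-1}$. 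Consequently $\C=\overline{\C}_1\times\F_{q^m}=\C_1\oplus\C_2$ with $\overline{\C}_1$ the code generated by $G_1$ (equivalently, with system $\mathcal{A}$), $\C_1=\overline{\C}_1\times\{0\}$, and $\C_2=\langle(0,\dots,0,1)\rangle_{\F_{q^m}}$. Finally, $\overline{\C}_1$ is non-degenerate of length $\frac{m(k-1)}{2}$ and $\F_{q^m}$-dimension $k-1$; its minimum distance is $\frac{m(k-1)}{2}-\bigl(\frac{m(k-1)}{2}-m+1\bigr)=m-1$ by Theorem~\ref{th:connection} together with the hyperplane-intersection profile of maximum scattered subspaces in Theorem~\ref{th:inter}; and, since $k\ge 3$, the Singleton bound of Theorem~\ref{th:singletonrank} for parameters $[\frac{m(k-1)}{2},k-1,m-1]_{q^m/q}$ holds with equality, so $\overline{\C}_1$ is MRD.

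The main obstacle is the middle paragraph: correctly carrying the ``maximum scattered in a hyperplane plus a point'' structure of $U^{\perp'}$ through the $\F_q$-duality onto $U$. This is essentially the dualization performed in the proof of Theorem~\ref{th:classn-1clubmaximum}, run in the opposite direction, and the care needed lies in keeping the ambient duality $\perp'$ distinct from the restricted dualities $\perp^{\star}$, $\perp^{\star\star}$ adapted to the hyperplane--point decomposition. Everything else (scattered forces spanning, reading the generator matrix off the system, and the MRD verification via the Singleton bound) is routine.
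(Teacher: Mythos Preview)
Your proposal is correct and follows essentially the same strategy as the paper: both recognize that the system $U$ is the dual of a maximum $(m-1)$-club, invoke the classification of such clubs, and read off the block structure of the generator matrix.

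There is a minor difference in route. The paper applies Lemma~\ref{le:analysedualset} directly to $U$ (since $U=(U^{\perp'})^{\perp'}$ is already the dual of the club), obtaining $U=(U\cap\overline{W})\oplus\langle w\rangle_{\F_q}$ without any further dualization; it then cites \cite{polverino2010linear} and \cite{zini2021scattered} to conclude that $U\cap\overline{W}$ is maximum scattered and the associated code is MRD. You instead invoke the full Theorem~\ref{th:classn-1clubmaximum} on $U^{\perp'}$ and then compute the dual back to $U$ explicitly, which amounts to re-deriving the content of Lemma~\ref{le:analysedualset} and the dualization already carried out inside the proof of Theorem~\ref{th:classn-1clubmaximum}. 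Your route is thus slightly longer but entirely self-contained: the explicit verification of the MRD property via Theorem~\ref{th:inter} and the Singleton bound, and the spanning argument for $\mathcal{A}$ via Theorem~\ref{th:rankMaxScattered}, avoid the external citations the paper relies on.
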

\begin{proof}
    Let $U$ be a system associated with $\C$.
    Since, by the above theorem, $U^\perp$ is an $(m-1)$-club of maximum rank, we can apply Lemma \ref{le:analysedualset} on $L_{U^\perp}$. Hence, there exists an hyperplane \(\overline{H} = \mathrm{PG}(\overline{W}, \mathbb{F}_{q^m})\) of weight \(m + \frac{m(k - 3)}{2}\) with respect to \(L_{U}\) and a point \(Q = \langle w \rangle_{\mathbb{F}_{q^m}} \in L_{U}\), with \(w \in U\) and \(Q \notin \overline{H}\) for which
    \[
    U = (U \cap \overline{W}) \oplus \langle w \rangle_{\mathbb{F}_q}.
    \]
    We coordinatize $\mathrm{PG}(k-1,q^m)$ in such a way that $\overline{H}$ has equation $x_{k-1}=0$ and $Q$ has coordinates $(0,\ldots,0,1)$.
    Note that the dual $(U \cap \overline{W})^{\perp^*}$ of $U \cap \overline{W}$ restricted to $\overline{W}$, as shown in the proof of Theorem \ref{th:classn-1clubmaximum}, is a scattered $\fq$-subspace contained in $\overline{W}$ of dimension $m(k-1)/2$. By \cite[Theorem 3.5]{polverino2010linear}, we have that $U \cap \overline{W}$ is a scattered $\fq$-subspace of $\overline{W}$.
    Therefore, if we consider the code $\overline{C}_1\subseteq \fqm^{m(k-1)/2}$ of dimension $k-1$ associated with $U \cap \overline{W}$ (by forgetting about the last coordinate), by \cite[Section 3]{zini2021scattered}, $\overline{C}_1$ is an MRD code.
    The assertion follows by writing down the generator matrix associated with the subspace $U$ and by Theorem \ref{th:connection}.
\end{proof}

We can also consider the code associated an $(m-1)$-club (instead of the one associated with the dual) and we get a two-weight rank-metric code. Indeed, for $(m-1)$-club linear set $L_U$ of rank $\frac{m(k+1)}{2} - 1$ in $\mathrm{PG}(k-1,q^m)$, using the fact that $L_{U^{\perp'}}$ is scattered, we can calculate the weights of the hyperplanes in $L_U$. 
\begin{lemma}\label{lem:weightslinesclass}
 Let $L_U$ be an $(m-1)$-club of rank $\frac{m(k+1)}{2} - 1$ in $\mathrm{PG}(k-1,q^m)$. For every hyperplane \(H\) of \(\mathrm{PG}(k-1, q^m)\), the weight of \(H\) with respect to \(L_U\) satisfies
\[
w_{L_U}(H) \in \left\{ \frac{m(k-1)}{2},\; \frac{m(k-1)}{2} - 1 \right\}.
\]
Moreover, there are exactly \(\frac{q^{\frac{m(k-1)}{2} + 1} - 1}{q - 1}\) hyperplanes of weight \(\frac{m(k-1)}{2}\) and all the rest have weight \(\frac{m(k-1)}{2}-1\).
\end{lemma}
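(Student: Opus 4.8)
The plan is to pass to the dual linear set $L_{U^{\perp'}}$, where we have already established good structural control, and translate weight information back and forth using the duality formula \eqref{eq:dualweight}. By Lemma~\ref{obs:weighdistperp}, $L_{U^{\perp'}}$ has rank $\frac{m(k-1)}{2}+1$ and, by Theorem~\ref{th:classn-1clubmaximum}, it is in fact a scattered $\F_q$-linear set. So the real task is: given a scattered linear set of rank $r:=\frac{m(k-1)}{2}+1$ in $\PG(k-1,q^m)$, determine the possible weights of hyperplanes with respect to it, and then dualize.

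First I would recall that, via \eqref{eq:dualweight}, for any hyperplane $H=\PG(W,\fqm)$ one has
\[
w_{L_U}(H)=\dim_{\F_q}(U\cap W^{\perp'})=\dim_{\F_q}(U^{\perp'}\cap W^{\perp'})+mk-\bigl(\tfrac{m(k+1)}{2}-1\bigr)-m(k-1),
\]
which simplifies to $w_{L_U}(H)=w_{L_{U^{\perp'}}}(W^{\perp})-\tfrac{m(k-3)}{2}-1+\text{(a correction)}$; rather than chase constants, the clean statement is that there is an affine bijection between the weight of $H$ in $L_U$ and the weight of the point $W^{\perp}$ in $L_{U^{\perp'}}$. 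Since $W^{\perp}$ ranges over all points of $\PG(k-1,q^m)$ as $H$ ranges over all hyperplanes, the possible hyperplane-weights of $L_U$ are in order-reversing affine correspondence with the possible point-weights of the \emph{scattered} set $L_{U^{\perp'}}$. A scattered linear set has points only of weight $0$ (points off the linear set) and weight $1$ (points on it), so $L_U$ has exactly two hyperplane-weights; plugging the two values into \eqref{eq:dualweight} with the known ranks gives $\frac{m(k-1)}{2}$ and $\frac{m(k-1)}{2}-1$, matching the claim. (Equivalently: hyperplanes of $L_U$ correspond to points of the scattered set $L_{U^{\perp'}}$, and the larger weight occurs precisely for the points lying in $L_{U^{\perp'}}$.)

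For the counting statement, the number of hyperplanes of weight $\frac{m(k-1)}{2}$ equals the number of points $\langle v\rangle_{\fqm}$ with $w_{L_{U^{\perp'}}}(\langle v\rangle_{\fqm})=1$, i.e. $|L_{U^{\perp'}}|$. Since $L_{U^{\perp'}}$ is scattered of rank $\frac{m(k-1)}{2}+1$, Proposition~\ref{prop:size} gives $|L_{U^{\perp'}}|=\dfrac{q^{\frac{m(k-1)}{2}+1}-1}{q-1}$, which is exactly the stated count; all remaining hyperplanes then have the smaller weight $\frac{m(k-1)}{2}-1$.

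The only genuinely delicate point is bookkeeping the constants in the duality identity \eqref{eq:dualweight}: one must be careful that the ``$\perp$'' used on hyperplanes is the $\fqm$-orthogonal complement (a point), apply part~(iv) of Proposition~\ref{prop:dualityproperties} to identify $W^{\perp}$ with $W^{\perp'}$, and substitute $t=\dim_{\F_q}(U^{\perp'})=\frac{m(k-1)}{2}+1$ and $s=1$ (the $\fqm$-dimension of the point $W^{\perp}$) into \eqref{eq:dualweight}. Once this is set up correctly the two weight values drop out immediately, and no further computation is needed beyond invoking that $L_{U^{\perp'}}$ is scattered (Theorem~\ref{th:classn-1clubmaximum}) and counting its points (Proposition~\ref{prop:size}).
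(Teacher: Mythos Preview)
Your approach is correct and is essentially the same as the paper's: both pass to the dual $L_{U^{\perp'}}$, invoke Theorem~\ref{th:classn-1clubmaximum} to conclude that $L_{U^{\perp'}}$ is scattered, use the duality identity \eqref{eq:dualweight} to translate point-weights of $L_{U^{\perp'}}$ into hyperplane-weights of $L_U$, and then count the hyperplanes of the larger weight as $|L_{U^{\perp'}}|$ via Proposition~\ref{prop:size}. The only caveat is that your displayed equation is garbled (the first equality $w_{L_U}(H)=\dim_{\F_q}(U\cap W^{\perp'})$ is false, and the sign on the constant is off), but you yourself flag this and state the correct affine relation immediately afterward; the clean identity the paper uses is simply $w_{L_U}(H)=w_{L_{U^{\perp'}}}(H^{\perp'})+\frac{m(k-1)}{2}-1$.
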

\begin{proof}
We apply Proposition~\ref{prop:dualityproperties} (v), which states that for every hyperplane \(H\) of \(\mathrm{PG}(k-1, q^m)\), the weight of \(H\) with respect to \(L_U\) satisfies:
\[
\begin{split}
w_{L_U}(H) &= w_{L_{U^{\perp'}}}(H^{\perp'}) - m + \left( \frac{m(k+1)}{2} - 1 \right) \\
           &= w_{L_{U^{\perp'}}}(H^{\perp'}) + \frac{m(k - 1)}{2} - 1.
\end{split}
\]

From \Cref{th:classn-1clubmaximum}, we know that \(L_{U^{\perp'}}\) is a scattered linear set, which implies that for every point \(P=H^{\perp'}\), the weight \(w_{L_{U^{\perp'}}}(P) \in \{0, 1\}\). Therefore, the weights of the hyperplanes with respect to \(L_U\) are:
\[
w_{L_U}(H) \in \left\{ \frac{m(k - 1)}{2} - 1,\; \frac{m(k - 1)}{2} \right\}.
\]

Finally, since each hyperplane \(H\) such that \(w_{L_{U^{\perp'}}}(H^{\perp'}) = 1\) corresponds to a point of weight one in \(L_{U^{\perp'}}\), and there are \(\frac{q^{\frac{m(k-1)}{2} + 1} - 1}{q - 1}\) such points, this gives the number of hyperplanes of weight \(\frac{m(k - 1)}{2}\), completing the proof.
\end{proof}

Therefore, combining the above result and \eqref{eq:relweight}, we obtain the following.

\begin{corollary}
    Let $L_U$ be an $(m-1)$-club of rank $\frac{m(k+1)}{2} - 1$ in $\mathrm{PG}(k-1,q^m)$. If $\C$ be a code associated with $U$, then $\C$ is a two-weight $[m(k+1)/2-1,k,m-1]_{q^m/q}$ code with weight distribution
    \begin{itemize}
        \item $A_0=1$;
        \item $A_{m}=(q^m-1)\frac{q^{\frac{m(k-1)}{2} + 1} - 1}{q - 1}$;
         \item $A_{m-1}=q^{km}-1-A_{m-1}$;
        \item $A_j=0$ for any $j \notin\{0,m-1,m\}$.
    \end{itemize}
\end{corollary}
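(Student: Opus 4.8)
The plan is to transfer the hyperplane–weight distribution of $L_U$ obtained in Lemma~\ref{lem:weightslinesclass} to the weight distribution of $\mathcal{C}$ through the geometric correspondence of Theorem~\ref{th:connection}, in the same spirit as the proof of Proposition~\ref{prop:codeparam} but now applied to $U$ itself instead of to $U^{\perp'}$. First I would note that $\mathcal{C}$ is a non-degenerate code of length $\dim_{\fq}(U)=\frac{m(k+1)}{2}-1$ and dimension $k$: non-degeneracy is immediate from Theorem~\ref{th:classn-1clubmaximum}, which gives $U=U'\oplus U''$ with $\langle U'\rangle_{\fqm}$ a hyperplane and $\langle U''\rangle_{\fqm}$ a point outside it, whence $\langle U\rangle_{\fqm}=\fqm^k$.

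Next, fixing a generator matrix $G$ of $\mathcal{C}$ whose columns form an $\fq$-basis of $U$, I would apply \eqref{eq:relweight}: for every nonzero $x\in\fqm^k$,
\[
w(xG)=\Big(\tfrac{m(k+1)}{2}-1\Big)-\dim_{\fq}(U\cap x^{\perp})=\Big(\tfrac{m(k+1)}{2}-1\Big)-w_{L_U}\big(\PG(x^{\perp},\fqm)\big).
\]
Plugging in the two possible hyperplane weights $\frac{m(k-1)}{2}$ and $\frac{m(k-1)}{2}-1$ supplied by Lemma~\ref{lem:weightslinesclass} yields $w(xG)\in\{m-1,m\}$; in particular $d=m-1$ and $\mathcal{C}$ is a two-weight code with the stated parameters.

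For the enumeration I would use that $x\mapsto x^{\perp}$ is $(q^m-1)$-to-one from $\fqm^k\setminus\{0\}$ onto the set of $\fqm$-hyperplanes, that $A_0=1$ accounts for the zero codeword, and that a hyperplane of weight $\frac{m(k-1)}{2}$ contributes codewords of rank $m-1$ while a hyperplane of weight $\frac{m(k-1)}{2}-1$ contributes codewords of rank $m$. Since Lemma~\ref{lem:weightslinesclass} exhibits exactly $\frac{q^{\frac{m(k-1)}{2}+1}-1}{q-1}$ hyperplanes of the first kind, multiplying by $q^m-1$ gives the multiplicity of the corresponding nonzero weight, and the remaining codewords — there are $q^{km}-1$ nonzero codewords in total — account for the other nonzero weight. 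There is no serious obstacle here; the only things that need care are verifying non-degeneracy so that Theorem~\ref{th:connection} applies, getting the arithmetic $n-w_{L_U}(H)$ right, keeping the $q^m-1$ proportionality factor, and double-checking which of $m-1$ and $m$ is the ``rare'' weight whose count is $(q^m-1)\frac{q^{\frac{m(k-1)}{2}+1}-1}{q-1}$.
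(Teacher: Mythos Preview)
Your approach is exactly the paper's: the corollary is stated as an immediate consequence of Lemma~\ref{lem:weightslinesclass} combined with \eqref{eq:relweight}, and you carry this out with the correct arithmetic and the correct $(q^m-1)$ proportionality factor. Your caution about ``which of $m-1$ and $m$ is the rare weight'' is well placed: your computation is right, and it actually shows that the displayed value $(q^m-1)\frac{q^{\frac{m(k-1)}{2}+1}-1}{q-1}$ is $A_{m-1}$, not $A_m$ as printed (and the line $A_{m-1}=q^{km}-1-A_{m-1}$ should read $A_{m-1}=q^{km}-1-A_{m}$); these are typos in the statement, not a gap in your argument.
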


\section{Conclusion and open problems}

In this paper, we have investigated the theory of clubs in projective spaces. This family of linear sets has been extensively studied in the literature due to its connections with KM-arcs~\cite{de2016linear}, the direction problem \cite{napolitano2022clubs}, rank-metric codes \cite{sheekeyVdV,napolitano2022clubs}, representability of the free product of rank one uniform $q$-matroids \cite{alfarano2024free} and related topics.  
We provided upper bounds on the rank of an \( i \)-club in \(\PG(k-1, q^m)\), obtained by associating a rank-metric code to the linear set and applying the MacWilliams identities.  
We showed that these bounds are tight when \(k > 2\) and \(i \geq m/2\), and we also demonstrated that for \(i \leq m - 2\), there exist non-equivalent constructions attaining these bounds. In the special case \(i = m - 1\), we established a classification result.  
In the final section, we studied three-weight rank-metric codes. Unlike the case of two-weight codes, we showed that no unified geometric characterization is possible, due to the existence of several examples with distinct geometric structures. We then focused on three-weight codes arising as duals of clubs and provided a partial classification.

We highlight two main problems that we consider particularly interesting and for which new methodologies appear to be necessary:

\begin{itemize}
    \item 
  When \(k\) is even and \(k \geq 6\), in Theorem~\ref{th:inequivalencekeven}, we were able to construct examples of \(i\)-club linear sets of maximum rank, for every \(m/2 \leq i \leq m - 3\), that are not equivalent to the cone construction (Proposition~\ref{prop:coneconstruct}). However, for \(k = 4\), we provided non-equivalent examples only for \(i = m/2\), see Theorem~\ref{thm:constrm/2-club}. It would be interesting to find additional examples, particularly in the case \(i > m/2\).
    \item The most open and least understood case is when \(i < m/2\). In this case, we have an upper bound on the rank, but it is unclear how close this bound is to being optimal. This ambiguity has already appeared in the case \(k = 2\). A significant result in this direction was provided by De Boeck and Van de Voorde in~\cite{de2022weight}, where they proved that \(2\)-clubs of rank \(5\) in \(\PG(1, q^5)\) do not exist. This seems to suggest that the bound in this case is not optimal.
\end{itemize}

\section*{Acknowledgments}
The first author is very grateful for the hospitality of the Dipartimento Matematica e Fisica of University of Campania, he was visiting it during the development of this research in September 2024.
This research was partially supported by the Italian National Group for Algebraic and Geometric Structures and their Applications (GNSAGA - INdAM).

\bibliographystyle{abbrv}
\bibliography{biblio}
\end{document}